\providecommand\@dotsep{5}
\def\listtodoname{List of Todos}
\def\listoftodos{\@starttoc{tdo}\listtodoname}
\numberwithin{equation}{section}
\newtheorem{theorem}{Theorem}[section]
\newtheorem{lemma}[theorem]{Lemma}
\newtheorem{proposition}[theorem]{Proposition}
\newtheorem{remark}[theorem]{Remark}
\newcommand{\R}{{\mathbb R}}
\newcommand{\J}{{\mathbb J}}
\newcommand{\eps}{\varepsilon}
\newcommand{\D}{{\mathbb D}}
\newcommand{\Ds}{{(-\Delta)^{s}}}
\newcommand{\Hs}{H^{s}(\mathbb R^{N})}
\newcommand{\X}{{(\mathcal K_{\alpha}*|u|^{p})}}
\renewcommand{\O}{{\mathscr O}}
\title[Fractional Choquard equations]{On fractional Choquard equations}
\author[P. d'Avenia]{Pietro d'Avenia}
\author[G. Siciliano]{Gaetano Siciliano}
\author[M. Squassina]{Marco Squassina}
\address[P. d'Avenia]{\newline\indent Dipartimento di Meccanica, Matematica e Management
\newline\indent 
Politecnico di Bari
\newline\indent
Via Orabona 4,  70125  Bari, Italy}
\email{\href{mailto:pietro.davenia@poliba.it}{pietro.davenia@poliba.it}}
\address[G. Siciliano]{\newline\indent Departamento de Matem\'atica
\newline\indent 
 Universidade de S\~ao Paulo 
\newline\indent 
Rua do Mat\~ao, 1010  05508-090 S\~ao Paulo, SP, Brazil }
\email{\href{mailto:sicilian@ime.usp.br}{sicilian@ime.usp.br}}
\address[M.\ Squassina]{\newline\indent Dipartimento di Informatica \newline\indent
Universit\`a degli Studi di Verona
\newline\indent
C\'a Vignal 2, Strada Le Grazie 15, I-37134 Verona, Italy}
\email{\href{mailto:marco.squassina@univr.it}{marco.squassina@univr.it}}
\thanks{The first author was supported by GNAMPA project  ``{\em Aspetti differenziali e 
geometrici nello studio di problemi  ellittici quasilineari}''. The second author is supported by CNPq and FAPESP, Brazil. 
The third author was supported by GNAMPA project  ``{\em Problemi al contorno per operatori non locali non lineari}''. 
This work was partially carried out during a stay of P.\ d'Avenia and M.\ Squassina at the University of S\~ao Paulo, Brazil 
and of P.\ d'Avenia  at the University of Verona, Italy. They would like to express their gratitude to the 
departments for the warm hospitality.}
\subjclass[2000]{35P15, 35P30, 35R11}
\keywords{Fractional Laplacian, Choquard equation, existence, nonexistence, multiplicity.}
\begin{document}

\begin{abstract}
We investigate a class of nonlinear Schr\"odinger equations with
a generalized Choquard nonlinearity and fractional diffusion. We obtain regularity, existence,
nonexistence, symmetry as well as decays properties.
\end{abstract}
\maketitle




\section{Introduction}

\noindent
Given $\omega>0$, $N\geq 3$,
$\alpha\in (0,N)$, $p>1$ and $s\in (0,1)$, we consider the nonlocal problem
 \begin{equation} \label{equomega} \tag{$\mathcal{P}_\omega$}
 \Ds u+\omega u=\X |u|^{p-2}u,
 \quad u\in H^{s}(\mathbb R^{N}), 
 \end{equation}
where $\mathcal K_{\alpha}(x)=|x|^{\alpha-N}$ 
and  the Hilbert space $H^s(\R^N)$ is defined as
\[
H^s(\R^N)=\big\{u\in L^2(\R^N):\,
(-\Delta)^{s/2}u \in L^2(\R^N)\big\},
\]
with scalar product and norm given by
\[
(u,v)= \int (-\Delta)^{s/2}u (-\Delta)^{s/2}v + \omega \int uv,
\qquad
\|u\|^2=\|(-\Delta)^{s/2}u \|_2^2+\omega \|u\|_2^2.
\]
The fractional Laplacian operator $(-\Delta)^s$ is defined by
\[
(-\Delta)^su(x)=-\frac{C(N,s)}{2}\int \frac{u(x+y)-u(x-y)-2u(x)}{|y|^{N+2s}}dy, \quad x\in\R^N,
\]
where $C(N,s)$ is a suitable normalization constant. Thus, problem \eqref{equomega}
presents nonlocal characteristics in the nonlinearity as well as in the (fractional) diffusion.

\noindent We point out that when $s=1$, $p=2$ and  $\alpha=2$, then \eqref{equomega} boils down to the so-called Choquard or nonlinear Schr\"odinger-Newton equation
\begin{equation}
\label{choq} 
-\Delta u +\omega u=({\mathcal K}_2*u^2 )u,
\quad u\in H^1(\R^N).
\end{equation}
This equation was elaborated by
Pekar \cite{pekar} in the framework of quantum mechanics.
Subsequently, it was adopted as an approximation of the Hartree-Fock
theory, see  \cite{Bong}. More recently, Penrose \cite{penrose} settled it as a model of self-gravitating matter.
The first investigations for existence and symmetry of the solutions to \eqref{choq} go back 
to the works of Lieb and Lions \cite{Lieb,Lions-ch}.
On this basis, we will refer to \eqref{equomega} as to the generalized nonlinear Choquard equation. 
In the last few years, the study of equations involving pseudodifferential operators has steadily grown. 
In \cite{metkla1,metkla2} the authors discuss recent developments in the description of anomalous diffusion via fractional dynamics and various fractional 
equations are derived asymptotically from L\'evy random walk models, extending
Brownian walk models in a natural way. In particular, in \cite{laskin}, a fractional Schr\"odinger equation with local power type nonlinearity was studied. This extends to a L\'evy framework the classical statement that path integral over Brownian trajectories leads to
the standard Schr\"odinger equation $-\Delta u+\omega u=f(u)$, see e.g.\ \cite{cazenave} and references therein. In the case $s=1/2$, problem \eqref{equomega} has been used to model the dynamics of pseudo-relativistic boson stars.
Indeed in \cite{boson} the following equation is studied
\begin{equation*}
\label{bosonstar}
\sqrt{-\Delta} u + u = (\mathcal{K}_2* |u|^2) u,
\qquad
u \in H^{1/2}(\mathbb{R}^3), u > 0, 
\end{equation*}
and in \cite{ElSc} it is shown that  the dynamical evolution of boson stars is  described
by the nonlinear evolution equation 
\begin{equation*}
\label{bosonstarm}
i\partial_t \psi= \sqrt{-\Delta + m^2} \psi - (\mathcal{K}_2* |\psi|^2) \psi \qquad (m\geq0)
\end{equation*}
for a field $\psi : [0, T ) \times \mathbb{R}^3 \to \mathbb{C}$
(see also \cite{FJL1,FJL2,lenzmann}).
The square root of the Laplacian also appears in the semi-relativistic
Schr\"odinger-Poisson-Slater systems, see e.g. \cite{JTN}. \\
\noindent So motivated by the above cited works, in this paper
we have considered \eqref{equomega} as a generalization of    \eqref{choq}
which takes into account more general convolution kernels and allows a 
distribution density of type $|u|^{p}$. Observe that mathematically
equation \eqref{equomega} involves two fractional operators since it can be seen as
a coupled system of two equations involving fractional laplacians (see Section \ref{SectMultiplicity},
in particular problem \eqref{bi}).

\medskip

We shall say that $u\in H^{s}(\R^N)$ is a weak solution of \eqref{equomega} if
$$
\int (-\Delta)^{s/2} u \ (-\Delta)^{s/2} v +\omega \int u v =\int \X |u|^{p-2}u v,\quad\text{for all $v \in \Hs$}.
$$
Let 
\begin{equation}
\label{p}
1+\frac{\alpha}{N} < p <\frac{N+\alpha}{N-2s},
\end{equation}
and introduce the Nehari manifold
$$ 
\mathcal N_{\omega}:=\Big\{u\in \Hs\setminus\{0\}: \|(-\Delta)^{s/2}u\|^{2}_2+\omega\|u\|_{2}^{2}-\int \X |u|^{p}=0\Big\},
$$
and the $C^1$ functional $E_\omega:H^s(\R^N)\to \R$ defined by
\begin{equation}
\label{Eomega}
E_{\omega}(u)=\frac{1}{2}\int |(-\Delta)^{s/2}u|^{2} +\frac{\omega}{2}\int u^{2}-\frac{1}{2p}\int \X |u|^{p}.
\end{equation}
A {\sl ground state} of \eqref{equomega} is a solution with minimal energy $E_{\omega}$ and can be characterized as
$$
\min_{u\in \mathcal N_{\omega}} E_{\omega}(u).
$$
The main result of the paper is the following.
\begin{theorem}\label{EXMinimo-Intro}
Assume that $p$ satisfies \eqref{p}. Then
\begin{description}
\item[Existence]
 there exists a ground state  $u\in H^s(\R^N)$ to problem \eqref{equomega}
which is positive, radially symmetric and
decreasing;
\item[Regularity]
$u\in L^{1}(\mathbb R^{N})$ and moreover
if $s\leq 1/2$, $u\in C^{0,\mu}(\mathbb{R}^N)$ for some $\mu\in (0,2s)$,
if  $s>1/2$, $u\in  C^{1,\mu}(\mathbb{R}^N)$ for some $\mu \in (0,2s-1)$;

\item[Asymptotics] if $p\geq 2$, there exists $C>0$ such that
$$
u(x)=\frac{C}{|x|^{N+2s}}+o( |x|^{-N-2s}), \quad  \text{as $|x|\to\infty$};
$$
\item[Morse Index]
 if $2\leq p<1+(2s+\alpha)/N$ and $s> 1/2$, the Morse index of $u$ is equal to one.
\end{description}
\end{theorem}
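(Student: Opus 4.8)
The plan is to prove the four assertions in turn, the ground state being obtained by minimisation on the Nehari manifold restricted to radial functions.

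\textit{Existence.} Under \eqref{p} the Hardy--Littlewood--Sobolev inequality combined with the fractional Sobolev embedding $\Hs\hookrightarrow L^{q}(\R^N)$, $q\in[2,2N/(N-2s)]$, shows that $\int\X|u|^{p}=\iint|x-y|^{\alpha-N}|u(x)|^{p}|u(y)|^{p}\,dx\,dy$ is finite and of class $C^{1}$ on $\Hs$; hence $E_\omega\in C^{1}$, $\mathcal N_\omega$ is a nonempty $C^{1}$ submanifold, for every $u\neq0$ the map $t\mapsto E_\omega(tu)$ ($t>0$) has a unique critical point $t_u>0$, which is its maximum and satisfies $t_u u\in\mathcal N_\omega$, and $m:=\inf_{\mathcal N_\omega}E_\omega=\tfrac{p-1}{2p}\inf_{\mathcal N_\omega}\|u\|^{2}>0$. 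Restricting the minimisation to radial functions is licit: by the Riesz rearrangement inequality and the fractional P\'{o}lya--Szeg\H{o} inequality $\|(-\Delta)^{s/2}u^{*}\|_{2}\le\|(-\Delta)^{s/2}u\|_{2}$, Schwarz symmetrisation (followed by the rescaling $\le1$ restoring the Nehari constraint) does not raise $E_\omega$, so the radial infimum equals $m$. A minimising sequence is bounded in $H^{s}_{r}(\R^N)$; along a subsequence it converges weakly to some $u$ and strongly in $L^{2Np/(N+\alpha)}(\R^N)$ by the compact radial embedding, available precisely because \eqref{p} forces $2<2Np/(N+\alpha)<2N/(N-2s)$. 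Then the Choquard term passes to the limit, the Nehari constraint keeps $u$ away from $0$, and weak lower semicontinuity of the norm yields $u\in\mathcal N_\omega$ with $E_\omega(u)=m$ (in fact strong convergence). A minimiser on the natural constraint $\mathcal N_\omega$ is a free critical point of $E_\omega$, so by the principle of symmetric criticality $u$ solves \eqref{equomega}. Since the right-hand side depends only on $|u|$ and $\|(-\Delta)^{s/2}|u|\|_{2}\le\|(-\Delta)^{s/2}u\|_{2}$, we may take $u\ge0$; then $\Ds u+\omega u\ge0$, $u\not\equiv0$, and the strong maximum principle for $(-\Delta)^{s}+\omega$ gives $u>0$; a final Schwarz symmetrisation shows the ground state can be chosen radially symmetric and nonincreasing.

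\textit{Regularity and $L^{1}$.} Put $f:=\X|u|^{p-2}u$, so $(-\Delta)^{s}u+\omega u=f$. Starting from $u\in\Hs\subset L^{2N/(N-2s)}(\R^N)$ one bootstraps: the Hardy--Littlewood--Sobolev inequality improves the integrability of $\X$, hence of $f$, and inverting $(-\Delta)^{s}+\omega$ (a Bessel-type operator gaining $2s$) improves that of $u$, the iteration being governed by $1/q_{\mathrm{new}}=(2p-1)/q-(2s+\alpha)/N$, whose effect is to strictly decrease $1/q$ exactly because $p<(N+\alpha)/(N-2s)$; after finitely many steps $u\in L^{\infty}(\R^N)$, whence $u\in L^{q}(\R^N)$ for all $q\in[2,\infty]$ and $\X$ is bounded and continuous with $\X(x)=O(|x|^{\alpha-N})$. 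Fractional Schauder estimates for $(-\Delta)^{s}u+\omega u=f$ with $f$ bounded (then H\"older) give $u\in C^{0,\mu}(\R^N)$ for any $\mu\in(0,2s)$ when $s\le1/2$ and $u\in C^{1,\mu}(\R^N)$ for any $\mu\in(0,2s-1)$ when $s>1/2$. Finally, the integrability obtained above together with Hardy--Littlewood--Sobolev gives $f\in L^{1}(\R^N)$; the fundamental solution $\mathcal G_\omega$ of $(-\Delta)^{s}+\omega$ is positive, lies in $L^{1}(\R^N)$ and satisfies $\mathcal G_\omega(x)=c\,|x|^{-N-2s}+o(|x|^{-N-2s})$ as $|x|\to\infty$ for some $c>0$, so $u=\mathcal G_\omega*f\in L^{1}(\R^N)$.

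\textit{Asymptotics.} Assume $p\ge2$. From $u\in L^{1}\cap L^{\infty}$ one gets $u(x)\to0$ and, via $u=\mathcal G_\omega*f$ and the tail of $\mathcal G_\omega$, a first polynomial decay rate for $u$; fed back into $f=\X|u|^{p-2}u$ this improves — here $p\ge2$, which makes $t\mapsto|t|^{p-2}t$ Lipschitz on bounded sets, is what closes the induction — until $f(x)=o(|x|^{-N-2s})$. Writing $u(x)/\mathcal G_\omega(x)=\int_{\R^N}\mathcal G_\omega(x-y)\mathcal G_\omega(x)^{-1}f(y)\,dy$ and splitting at $|y|=|x|/2$ — on $\{|y|\le|x|/2\}$ the ratio $\mathcal G_\omega(x-y)/\mathcal G_\omega(x)$ is bounded and tends to $1$ pointwise, so dominated convergence applies, while on the complement $f(y)=o(|x|^{-N-2s})$ and $\mathcal G_\omega\in L^{1}$ — one obtains
\[
u(x)=\Big(c\int_{\R^N}f+o(1)\Big)\,|x|^{-N-2s},\qquad|x|\to\infty,
\]
and the constant $C:=c\int_{\R^N}f$ is strictly positive because $f=\X|u|^{p-2}u>0$.

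\textit{Morse index.} Assume $2\le p<1+(2s+\alpha)/N$ and $s>1/2$. Then $t\mapsto|t|^{p}$ is $C^{2}$, so, using the regularity and decay just established, $E_\omega$ is $C^{2}$ near $u$ and its self-adjoint linearisation $L_u:=E_\omega''(u)$ has essential spectrum $[\omega,\infty)$, the potential terms being relatively compact (here the $L^{2}$-subcritical bound $p<1+(2s+\alpha)/N$ enters); thus the Morse index of $u$ equals the finite number of negative eigenvalues of $L_u$. \emph{Lower bound:} along the fibre $t\mapsto E_\omega(tu)=\tfrac{t^{2}}{2}\|u\|^{2}-\tfrac{t^{2p}}{2p}\int\X|u|^{p}$ the point $t=1$ is a strict maximum and $\frac{d^{2}}{dt^{2}}E_\omega(tu)\big|_{t=1}=(2-2p)\|u\|^{2}<0$, i.e. $L_u[u,u]<0$, so the index is $\ge1$. \emph{Upper bound:} $u$ minimises $E_\omega$ on $\mathcal N_\omega$ and, being a free critical point, has vanishing Lagrange multiplier, so the constrained second variation coincides with $L_u$ and is positive semidefinite on the codimension-one tangent space $T_u\mathcal N_\omega=\{\phi\in\Hs:\langle G_\omega'(u),\phi\rangle=0\}$, with $G_\omega(u):=\|u\|^{2}-\int\X|u|^{p}$; by the Courant--Fischer principle $L_u$ then has at most one negative eigenvalue. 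Hence the Morse index of $u$ equals one. The main obstacle is the sharp decay in the asymptotics step: pushing $f$ below the threshold $|x|^{-N-2s}$ requires precise control of the tail of $\mathcal G_\omega$ and a careful iteration on the decay of $u$; the compactness step in the existence proof and the verification that $E_\omega$ is genuinely $C^{2}$ with relatively compact perturbation are the other delicate points.
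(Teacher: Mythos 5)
Your routes for the \textbf{Existence} and \textbf{Morse index} parts are sound but genuinely different from the paper's: you minimize $E_\omega$ on the Nehari manifold restricted to radial functions (Riesz rearrangement plus fractional P\'olya--Szeg\H{o}, then the compact radial embedding), whereas the paper minimizes the quotient $S$ following Moroz--Van Schaftingen with the Lions-type Lemma \ref{lemmacc} (Theorem \ref{ILTEOREMA}) and gets symmetry by polarization (Theorem \ref{sym-gs-thm}); and you obtain the index bound from the second-order necessary condition on $\mathcal N_\omega$, where the Lagrange multiplier vanishes, combined with $E_\omega''(u)[u,u]=(2-2p)\|u\|^2<0$ and the splitting $H^s(\R^N)=\ker G_\omega'(u)\oplus\mathrm{span}\{u\}$, whereas the paper passes through the equivalent $L^2$-constrained problem (Propositions \ref{mappatura1}, \ref{PropoPietro} and \ref{morse-thm}). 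Your index argument is in fact more direct (it only uses $p\geq 2$ for $C^2$ smoothness), and your asymptotics paragraph is a self-contained version of the Frank--Lenzmann--Silvestre lemma that the paper simply invokes in Theorem \ref{dec-thm}.

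There is, however, a genuine gap in the \textbf{Regularity} step, and it propagates to the \textbf{Asymptotics}. Your bootstrap only moves \emph{upwards} in integrability, ending with $u\in L^q(\R^N)$ for all $q\in[2,\infty]$; from this you claim that $f:=(\mathcal K_\alpha*|u|^p)|u|^{p-2}u\in L^1(\R^N)$ and hence $u=\mathcal G_\omega*f\in L^1(\R^N)$. That deduction is not available on the whole range \eqref{p}: by Hardy--Littlewood--Sobolev,
\begin{equation*}
\int (\mathcal K_\alpha*|u|^p)\,|u|^{p-1}\;\leq\; C\,\|u\|_{a(p-1)}^{p-1}\,\|u\|_{bp}^{p},\qquad \frac1a+\frac1b=1+\frac{\alpha}{N},
\end{equation*}
and if only exponents $\geq 2$ are at your disposal then $1/a\leq (p-1)/2$, $1/b\leq p/2$, so the identity $1/a+1/b=1+\alpha/N$ can be met only when $p\geq 3/2+\alpha/N$. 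Since \eqref{p} allows $1+\alpha/N<p<3/2+\alpha/N$ (and even some $p\geq 2$ when $\alpha>N/2$), the step ``$u\in L^{[2,\infty]}$ $\Rightarrow f\in L^1$'' is unjustified precisely where it is needed; note also that your asymptotics argument starts from ``$u\in L^1\cap L^\infty$'' and its constant is $c\int f$, so both the $L^1$ claim and the finiteness of $\int f$ hinge on this. The repair is the \emph{downward} iteration carried out in Lemma \ref{LqWr}: writing $u=\mathcal G_\omega*f$ with $\mathcal G_\omega\in L^1$ and iterating $1/q_{i+1}=(2p-1)/q_i-\alpha/N$ from $q_0=2Np/(N+\alpha)$, the map $x\mapsto(2p-1)x-\alpha/N$ expands away from its fixed point $\alpha/(2N(p-1))<1/q_0$, so after finitely many steps one reaches an exponent with $f\in L^1$ and $u\in L^q$ for all $q\geq1$ in the admissible range, in particular $u\in L^1(\R^N)$. (Alternatively, for the decay one can argue as in the cited lemma of Frank--Lenzmann--Silvestre, which produces $u\lesssim\langle x\rangle^{-N-2s}$ without assuming $f\in L^1$ beforehand, and then read off $u\in L^1$.) With this piece added, your proof is complete.
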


\noindent
Under some restrictions on the values of $p$, there exist different ways of
obtaining ground state solutions, via minimization problems 
which turn out to be equivalent up to a suitable change of scale,
as shown in Propositions~\ref{mappatura1} and \ref{PropoPietro}.
In particular, in the range
\begin{equation}
\label{p2}
1+\frac{\alpha}{N}<p<1+\frac{2s+\alpha}{N}
\end{equation}
the ground states can be found by minimizing the functional 
\begin{equation}
\label{defE0}
E_0(u)=\frac{1}{2}\int |(-\Delta)^{s/2}u|^{2} -\frac{1}{2p}\int \X |u|^{p}
\end{equation}
on $L^2$-spheres, which allows to obtain
the additional information about the Morse index of solutions.
The information provided in Proposition~\ref{PropoPietro} is also useful
when studying the {\em orbital stability} property of the family of ground states for the equation 
\begin{equation}
\label{full} 
{\rm i}u_t=\Ds u+\omega u-\X |u|^{p-2}u\quad \R^N\times(0,\infty).
 \end{equation}
 This topic was recently investigated in \cite{Wu} in the case $p=2$ and with $\alpha\in (N-2s,N)$, 
 see the introduction therein for the physical motivations.  
 We plan to investigate \eqref{full} - in presence of a parameter $\eps$ of singular perturbation - from the point of view of {\em soliton dynamics} 
 by following an approach used in \cite{pisani} to study the local case $s=1$ and motivated by the absence of general results
about the nondegeneracy of ground states.
\vskip3pt
\noindent
We point out that, contrary to the local case $s=1$, the solutions can only decay at the 
polynomial rate $|x|^{-N-2s}$. We refer the reader to \cite{MV} for sharp results about 
the exponential decay of ground state solutions in the case $s=1$.

\vskip5pt
\noindent
Moreover, we have the following multiplicity result.
\begin{theorem}
\label{multiplinto}
Assume that \eqref{p} holds. Then \eqref{equomega} admits infinitely many radial solutions
with diverging norm and diverging energy levels. If in addition $N=4$ or $N\geq 6$, then \eqref{equomega} 
admits infinitely many nonradial solutions with diverging norm and diverging energy levels.
\end{theorem}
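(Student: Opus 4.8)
The plan is to run an equivariant minimax scheme: $E_\omega$ (see \eqref{Eomega}) is even, so one gets multiplicity from $\mathbb Z_2$-index theory, while suitable orthogonal group actions are used to restore the compactness that is missing on the whole $H^s(\R^N)$. For the \emph{radial} statement I would work on the closed subspace $H^s_{\mathrm{rad}}(\R^N)$ of radially symmetric functions. Since $E_\omega$ is invariant under the natural $O(N)$-action $u\mapsto u(g\cdot)$, the principle of symmetric criticality of Palais reduces the problem to finding critical points of $E_\omega|_{H^s_{\mathrm{rad}}}$. On this subspace the embedding $H^s_{\mathrm{rad}}(\R^N)\hookrightarrow L^q(\R^N)$ is compact for every $q$ in the \emph{open} interval $(2,2N/(N-2s))$ (the fractional Strauss--Lions lemma). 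Condition \eqref{p} is exactly equivalent to $q_0:=2Np/(N+\alpha)\in(2,2N/(N-2s))$, so Hardy--Littlewood--Sobolev gives $\big|\!\int\X|u|^p\big|\le C\|u\|_{q_0}^{2p}$, and combined with the compact embedding into $L^{q_0}$ this makes $u\mapsto\int\X|u|^p$ sequentially weakly continuous with compact derivative on $H^s_{\mathrm{rad}}$. Hence $E_\omega|_{H^s_{\mathrm{rad}}}$ satisfies the Palais--Smale condition: since $\langle E_\omega'(u),u\rangle=\|u\|^2-\int\X|u|^p$, the Ambrosetti--Rabinowitz inequality holds with exponent $2p>2$, so (PS) sequences are bounded, and compactness of the nonlocal term turns weak into strong convergence.

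Next I would invoke the Fountain Theorem (in Bartsch's form). The functional $E_\omega$ is even with $E_\omega(0)=0$; because $2p>2$ it has the mountain-pass geometry near $0$ and, being of the form $\tfrac12\|u\|^2-\tfrac1{2p}\int\X|u|^p$ with $\int\X|u|^p\ge c\|u\|^{2p}$ on any finite-dimensional subspace, it is unbounded below there. Fixing an increasing sequence of finite-dimensional subspaces of $H^s_{\mathrm{rad}}$ whose union is dense, the estimate $\beta_k:=\sup\{\|u\|_{q_0}:u\in Z_k,\ \|u\|=1\}\to0$ (a standard consequence of the compact embedding) yields the two geometric conditions of the Fountain Theorem, hence a sequence of critical values $c_k\to+\infty$. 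Each corresponding critical point $u_k$ lies on the Nehari manifold, so $\|u_k\|^2=\int\X|u_k|^p$ and therefore $E_\omega(u_k)=\big(\tfrac12-\tfrac1{2p}\big)\|u_k\|^2=c_k$, giving $\|u_k\|^2=\tfrac{2p}{p-1}\,c_k\to+\infty$: the norms and the energy levels diverge.

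For the \emph{nonradial} part I would adapt the Bartsch--Willem construction. If $N=4$ or $N\ge6$ one may pick $m\ge2$ with $\ell:=N-2m\in\{0\}\cup[2,+\infty)$ (take $m=2$), and identify $\R^N\cong\R^m\times\R^m\times\R^\ell$ (the last variable absent when $\ell=0$). Let $G=O(m)\times O(m)\times O(\ell)$ act coordinate-wise and let $\tau$ be the involution of $\R^N$ exchanging the two $\R^m$ factors. Consider
\[
X=\big\{u\in H^s(\R^N):\ u\circ g=u\ \ \forall g\in G,\quad u\circ\tau=-u\big\}.
\]
A nonzero element of $X$ cannot be radial, since a radial function is $\tau$-invariant whereas elements of $X$ are $\tau$-anti-invariant; and $X$ is infinite-dimensional, e.g.\ it contains all functions $f(|x|,|y|,|z|)-f(|y|,|x|,|z|)$. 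The group generated by $G$ and $\tau$ acts on $H^s(\R^N)$ by $g\cdot u=u(g^{-1}\cdot)$ and $\tau\cdot u=-u(\tau\cdot)$, leaves $E_\omega$ invariant (each $g$ and $\tau$ is a Euclidean isometry and $E_\omega$ is even), and has fixed-point set exactly $X$, so symmetric criticality again reduces us to $E_\omega|_X$. Since every Euclidean factor entering the definition of $X$ has dimension at least two, the embedding $X\hookrightarrow L^q(\R^N)$ is compact for $q\in(2,2N/(N-2s))$; the arguments of the previous two paragraphs then apply verbatim on $X$ and produce infinitely many critical points of $E_\omega|_X$ with diverging energies, hence nonradial solutions of \eqref{equomega} with diverging norm, again by the Nehari identity.

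The genuinely delicate steps are the two compactness claims — the fractional Strauss--type compact embeddings of $H^s_{\mathrm{rad}}$ and of the partially symmetric space $X$ — together with the good behaviour of the nonlocal term $u\mapsto\int\X|u|^p$ along weakly convergent sequences, which has to be obtained by combining Hardy--Littlewood--Sobolev with the compact embedding into $L^{q_0}$ (equivalently, a Brezis--Lieb type splitting for the convolution term). The dimensional restriction $N=4$ or $N\ge6$ is forced precisely by the requirement that all Euclidean blocks used to build $X$ be at least two-dimensional, which is exactly what makes the embedding of $X$ compact.
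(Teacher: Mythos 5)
Your proposal is correct and follows essentially the same route as the paper: an even, equivariant minimax scheme on Lions-compact symmetric subspaces of $H^s(\R^N)$, Palais' symmetric criticality, and the Bartsch--Willem $\O(m)\times\O(m)\times\O(N-2m)$ construction with the sign-changing involution for the nonradial solutions under $N=4$ or $N\geq 6$. The only differences are cosmetic: you invoke the Fountain Theorem rather than the Symmetric Mountain Pass Theorem, obtain the divergence of the norms via the Nehari identity instead of the bound $\int \X|u|^{p}\leq C\|u\|^{2p}$, and treat the weak continuity of the convolution term directly by Hardy--Littlewood--Sobolev plus the compact embedding rather than through the paper's auxiliary problem \eqref{bi} and Lemma \ref{compattezza}.
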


\noindent
Next, we have the following nonexistence result.

\begin{theorem}
\label{pohoz-cons-1}
Assume that either $p\leq 1+\alpha/N$ or $p\geq(N+\alpha)/(N-2s)$. Then \eqref{equomega} does not admit 
nontrivial solutions $u \in C^2(\R^N)$. 
\end{theorem}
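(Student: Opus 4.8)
The plan is to combine two integral identities satisfied by any solution of \eqref{equomega} and to derive a contradiction in the indicated ranges of $p$. The first, preliminary, step is to show that a solution which is $C^2(\R^N)$ (and, by definition of solution, lies in $H^s(\R^N)$) is in fact bounded and decays polynomially at infinity; this follows from a bootstrap/Moser argument applied to the equation, exactly as in the regularity and asymptotic parts of Theorem~\ref{EXMinimo-Intro}, and it guarantees in particular that $\mathcal K_\alpha*|u|^p$ is finite and well behaved at infinity and that $\mathcal D(u):=\int\X|u|^{p}<\infty$. Testing \eqref{equomega} with the admissible function $u$ then yields the Nehari identity
$$\|(-\Delta)^{s/2}u\|_2^{2}+\omega\|u\|_2^{2}=\mathcal D(u).$$

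The second identity is the Poho\v{z}aev identity
$$\frac{N-2s}{2}\,\|(-\Delta)^{s/2}u\|_2^{2}+\frac{N}{2}\,\omega\|u\|_2^{2}=\frac{N+\alpha}{2p}\,\mathcal D(u),$$
obtained by using $x\cdot\nabla u$ as a test function. I would derive it by passing to the Caffarelli--Silvestre harmonic extension $U$ of $u$ on $\R^{N+1}_+$, running the Rellich--Poho\v{z}aev computation there with the multiplier $(x,t)\cdot\nabla U$ on a half-ball $B_R^+$, and letting $R\to\infty$; the three coefficients $\tfrac{N-2s}{2}$, $\tfrac{N}{2}$, $\tfrac{N+\alpha}{2p}$ are precisely the dilation exponents of the three terms of $E_\omega$ under $u\mapsto u(\cdot/\lambda)$ (respectively $\lambda^{N-2s}$, $\lambda^{N}$, $\lambda^{N+\alpha}$), the last one being forced by the homogeneity of the Riesz kernel $\mathcal K_\alpha(x)=|x|^{\alpha-N}$. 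Making this rigorous is the main obstacle: one has to check that the boundary terms on $\partial B_R^+$ vanish as $R\to\infty$ (this is where the decay of $u$ enters) and to identify the contribution of the nonlocal term as $-\tfrac{N+\alpha}{2p}\mathcal D(u)$, e.g.\ by differentiating the exact relation $\mathcal D(u(\cdot/\lambda))=\lambda^{N+\alpha}\mathcal D(u)$ at $\lambda=1$ and justifying the interchange of differentiation and integral by dominated convergence.

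Given both identities, the conclusion is elementary. Put $A:=\|(-\Delta)^{s/2}u\|_2^{2}\ge0$ and $B:=\omega\|u\|_2^{2}\ge0$. Substituting $\mathcal D(u)=A+B$ into the Poho\v{z}aev identity, multiplying by $2p$, and rearranging gives
$$\big[p(N-2s)-(N+\alpha)\big]\,A+\big[pN-(N+\alpha)\big]\,B=0.$$
Since $0<2s<N$, one has $1+\tfrac{\alpha}{N}<\tfrac{N+\alpha}{N-2s}$. If $p\ge\tfrac{N+\alpha}{N-2s}$, then $p>1+\tfrac{\alpha}{N}$, so the coefficient of $B$ is strictly positive and that of $A$ is nonnegative; as $A,B\ge0$, this forces $B=0$, i.e.\ $u\equiv0$. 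If $p\le1+\tfrac{\alpha}{N}$, then $p<\tfrac{N+\alpha}{N-2s}$, so the coefficient of $A$ is strictly negative and that of $B$ is nonpositive; hence $A=0$, i.e.\ $(-\Delta)^{s/2}u=0$, so $\widehat u$ is supported at the origin and, $u$ being in $L^2(\R^N)$, $u\equiv0$. The endpoint values $p=1+\tfrac{\alpha}{N}$ and $p=\tfrac{N+\alpha}{N-2s}$ are covered since then exactly one of the two coefficients vanishes and the other keeps its sign. Thus \eqref{equomega} admits no nontrivial $C^2$ solution in either range.
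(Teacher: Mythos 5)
Your proposal is correct and follows essentially the same route as the paper: the Poho\v zaev identity is obtained through the Caffarelli--Silvestre extension with the multiplier $z\cdot\nabla w$ and a limiting argument on large half-balls, then combined with the Nehari identity $\|(-\Delta)^{s/2}u\|_2^2+\omega\|u\|_2^2=\int\X|u|^p$ to produce the relation $\big[p(N-2s)-(N+\alpha)\big]\|(-\Delta)^{s/2}u\|_2^2+\big[pN-(N+\alpha)\big]\omega\|u\|_2^2=0$, whose sign analysis (including your careful treatment of the endpoint cases) forces $u\equiv 0$ exactly as in the paper.
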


\noindent
As a consequence, the range of $p$ detected in \eqref{p} is optimal for the existence of nontrivial solutions.
The first complete study of Poho\v zaev identities and nonexistence results in star-shaped bounded domains
for equations involving the fractional Laplacian and a local nonlinearity was done in \cite{rosoton,rosoton-2}.
Then, more recently, for fractional equations set on the whole $\R^N$, in \cite{ChangWang}, the authors obtained
a Poho\v zaev identity for power type nonlinearities. Theorem~\ref{pohoz-cons-1} is based upon Poho\v zaev identity
\eqref{Pohozaev} which is obtained, as in \cite{ChangWang}, by the localization procedure due to Caffarelli and Silvestre \cite{CS}.

\vskip5pt
\noindent
Next, we denote by $\dot H^{s}(\R^N)$ the completion of $C^\infty_c(\R^N)$ with respect
to the seminorm $\|(-\Delta)^{s/2}\cdot \|_2$, known as Gagliardo seminorm, and consider the problem
\begin{equation} \label{equomega-0} \tag{$\mathcal{P}_0$}
\Ds u=\X |u|^{p-2}u,
\quad u\in \dot H^{s}(\mathbb R^{N})
\end{equation}
We have the following result.
\begin{theorem}
\label{pohoz-cons-2}
The following assertions hold:
\begin{enumerate}
\item Let $p\neq \frac{\alpha+N}{N-2s}$.
Then  \eqref{equomega-0} does not admit nontrivial solutions $u \in \dot H^s(\R^N)\cap L^{\frac{2pN}{N+\alpha}}(\R^N)$. 
\item Let $p=\frac{\alpha+N}{N-2s}=2$.
Then the problem writes as
\begin{equation}
\label{zeromass}
\Ds u=(|x|^{-4s}*|u|^2)u,
\quad u\in \dot H^{s}(\R^N), \,\,\, N>4s,
\end{equation}
and any of its solutions of fixed sign have the form
\begin{equation}
\label{rapprs}
C\Big(\frac{t}{t^2+|x-x_0|^2}\Big)^{\frac{N-2s}{2}},\quad x\in\R^N,
\end{equation}
for some $x_0\in\R^N$, $C>0$ and $t>0$.
\end{enumerate}
\end{theorem}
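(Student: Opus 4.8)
The plan is to derive a Poho\v zaev-type identity for \eqref{equomega-0} and combine it with the equation's natural Nehari-type (scaling) identity, exactly as was done for \eqref{pohoz-cons-1} via the Caffarelli--Silvestre extension \cite{CS}. First I would extend $u$ to the half-space $\R^{N+1}_+$ as the solution $w$ of the degenerate elliptic problem $\mathrm{div}(y^{1-2s}\nabla w)=0$ with trace $u$ on $\{y=0\}$, so that $-\lim_{y\to 0^+}y^{1-2s}\partial_y w$ is (a constant times) $\Ds u$. Testing the extended equation against $x\cdot\nabla_x w + \frac{N-2s}{2}w$ and integrating over $\R^{N+1}_+$, with the Riesz-potential term handled by the standard change of variables in the double convolution integral, yields an identity of the form
\begin{equation*}
\frac{N-2s}{2}\int |(-\Delta)^{s/2}u|^2 = \frac{N+\alpha}{2p}\int \X|u|^p.
\end{equation*}
On the other hand, testing \eqref{equomega-0} directly against $u$ gives
\begin{equation*}
\int |(-\Delta)^{s/2}u|^2 = \int \X|u|^p.
\end{equation*}
Comparing the two forces $\frac{N-2s}{2}=\frac{N+\alpha}{2p}$, i.e.\ $p=\frac{\alpha+N}{N-2s}$; hence if $p\neq\frac{\alpha+N}{N-2s}$ both integrals vanish, and since $u\in\dot H^s(\R^N)$ this gives $u=0$, proving assertion (1). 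The integrability assumption $u\in L^{\frac{2pN}{N+\alpha}}(\R^N)$ is precisely what makes $\int\X|u|^p$ finite (via Hardy--Littlewood--Sobolev) and justifies all the integrations by parts and the decay needed to kill boundary terms at infinity in the extension.

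For assertion (2), when $p=\frac{\alpha+N}{N-2s}=2$ one has $\alpha=2(N-2s)$, so $\mathcal K_\alpha(x)=|x|^{-4s}$ and $N>4s$; the equation \eqref{zeromass} is then critical and, after writing $v=|x|^{-4s}*|u|^2$, it becomes the system $\Ds u = vu$, $(-\Delta)^{2s}v = c_N|u|^2$ (a relation between two fractional powers). For solutions of fixed sign, the strategy is to invoke the classification of positive solutions to the critical fractional Yamabe-type equation: one shows $u$ solves $\Ds u = \big(\int\mathcal K_\alpha(\cdot-z)u(z)^2\,dz\big)u$ and argues that, by the moving-plane method for the fractional Laplacian (applied to the extension, as in \cite{CS}-type arguments) or by the classification results already available in the literature for $\Ds u = u^{\frac{N+2s}{N-2s}}$ and its Choquard analogues, $u$ must be an Aubin--Talenti bubble, i.e.\ of the form \eqref{rapprs}. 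Concretely, I would first establish radial symmetry about some point $x_0$ via moving planes, then reduce the ODE/integral equation in the radial variable and identify the explicit family.

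The main obstacle is assertion (2): establishing symmetry via moving planes in the nonlocal Choquard setting is delicate because the nonlinearity $(\mathcal K_\alpha*|u|^2)u$ couples values of $u$ at distant points, so the usual maximum-principle comparison must be carried out on the convolution term as well (one needs a monotonicity/reflection inequality for the Riesz potential under reflection across a hyperplane), and one must control decay of $u$ at infinity well enough to start the moving-plane procedure. Assertion (1) is comparatively routine once the extension is in place; the only care needed there is the rigorous justification of the Poho\v zaev computation — in particular the vanishing of boundary integrals over half-spheres of radius $R\to\infty$ in $\R^{N+1}_+$ — which is where the hypothesis $u\in\dot H^s\cap L^{\frac{2pN}{N+\alpha}}$ is used, together with elliptic estimates on the extension.
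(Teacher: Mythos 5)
Your treatment of assertion (1) is correct and coincides with the paper's: the Poho\v zaev identity \eqref{Pohozaev} (with $\omega=0$), obtained through the Caffarelli--Silvestre extension, combined with the identity from testing \eqref{equomega-0} against $u$, forces both $\|(-\Delta)^{s/2}u\|_2^2$ and $\int\X|u|^p$ to vanish when $p\neq(\alpha+N)/(N-2s)$, and the hypothesis $u\in L^{2pN/(N+\alpha)}(\R^N)$ is exactly what makes the convolution term finite and the boundary terms manageable.

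For assertion (2), however, your proposal stops at the point where the real work begins, and the two steps you leave open would not go through as sketched. First, you cannot simply ``invoke the classification results already available in the literature'': for the critical Hartree equation \eqref{zeromass} with the fractional Laplacian no such classification is proved in the references (in \cite{Ingoroneta} the case $s=1$ is treated and the fractional case is only claimed in a remark), which is precisely why the paper supplies a self-contained argument. Second, radial symmetry plus a ``reduction to an ODE/integral equation in the radial variable'' does not identify the family \eqref{rapprs}: there is no ODE for $(-\Delta)^s$, and symmetry alone does not single out the bubble. The missing ingredients, as the paper implements them, are (a) the moving-plane method run in \emph{integral form} on the equivalent system $u(x)=\int|x-y|^{2s-N}v(y)u(y)\,dy$, $v=|x|^{-4s}*u^2$, where Hardy--Littlewood--Sobolev and H\"older estimates on $\Sigma_\lambda^u$, $\Sigma_\lambda^v$ give smallness of $\|v\|_{L^{N/(2s)}(\Sigma_\lambda^c)}$ and $\|u\|_{L^{2^*_s}(\Sigma_\lambda^c)}$ as $\lambda\to-\infty$; this is what lets the plane start moving \emph{without} any pointwise decay information, which you correctly flag as an obstacle but do not resolve; and (b) the invariance of \eqref{zeromass} under the Kelvin transform, which requires the Fall--Weth identity $H(-\Delta)^s=(-\Delta)^sK$ \cite{fallweth} together with an explicit computation showing $H[v_u u]=v_{Ku}Ku$ for the Hartree term. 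It is this Kelvin invariance that yields the sharp asymptotics $|x|^{N-2s}u(x)\to u_\infty$, and only with symmetry, scaling invariance, Kelvin invariance and these asymptotics in hand can one run the classification machinery of \cite[Section 3.1]{classif} to arrive at \eqref{rapprs}. Without (a) and (b) your outline for part (2) is a statement of intent rather than a proof.
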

\noindent
The classification of the solutions to problem \eqref{zeromass} is reminiscent
of that for the fixed-sign solutions to 
\begin{equation*}
(-\Delta)^s u=u^{\frac{N+2s}{N-2s}} \qquad \text{in $\R^N$.}
\end{equation*}
In \cite{classif} the authors proved that any positive to this problem has the form of \eqref{rapprs}.
\vskip4pt
\noindent
The plan of the paper is as follows.\newline  In Section~\ref{preliminaries}, we collect some preliminary notions and results. \newline
In Section~\ref{sec:reg} we investigate the H\"older regularity and the asymptotic behaviour of weak solutions. \newline  
In Section~\ref{sec:gs}
we prove the existence of least energy solutions (ground states) determining equivalent ways of characterizing them. Here we also get their symmetry
and monotonicity properties
and we investigate the Morse index of ground states in the particular ranges $2\leq p<1+(2s+\alpha)/N$ and $s\geq1/2$.  \newline
In Section~\ref{SectMultiplicity} we get the existence of infinitely many solutions, symmetric under the action of some group. \newline
In Section~\ref{nonex}, we obtain a general Poh\v ozaev identity
and we prove Theorem \ref{pohoz-cons-2}.
\vskip2pt
\noindent
In  the paper, $C$ will always denote a generic constant which may vary from line to line. Unless expressly specified, the integral are meant to be extended to $\mathbb{R}^N$.

%
%


\section{Preliminaries}
\label{preliminaries}
\noindent
First of all, let us recall the following properties which follow from the fractional Sobolev embedding
\begin{equation*}
H^{s}(\mathbb R^{n})\hookrightarrow L^{r}(\mathbb R^{N}), 
\qquad 
r\in [2,2_{s}^*], 
\text{ where } 
2_{s}^{*}:=\frac{2N}{N-2s},
\end{equation*}
the Hardy-Littlewood inequality and the fractional version of the Gagliardo-Niremberg inequality
\begin{equation}\label{FracGN}
 \|u\|_{q} \leq C \|(-\Delta)^{s/2}u\|_{2}^{\beta} 
\|u\|_{2}^{(1-\beta)}
\end{equation}
for $q\in [2,2^{*}_{s}]$ and  $\beta$ satisfying
$\frac{1}{q}=\frac{\beta}{2^*_{s}}+\frac{1-\beta}{2}$.
Notice that by \cite[Proposition 3.6]{DPV}),
\begin{equation}\label{gagliarda}
\|(-\Delta)^{s/2}u\|_{2}^{2}=\frac{C(N,s)}{2}\iint \frac{|u(x)-u(y)|^{2}}{|x-y|^{N+2s}}.
\end{equation}


\begin{lemma}\label{Young}
Let $p$ satisfy \eqref{p}. We have that
\begin{enumerate}[label=(\roman*),ref=\roman*]
\item \label{itY1} $2Np/(N+\alpha)\in (2, 2_{s}^{*})$ and for every $u\in\Hs$
\begin{equation}\label{HL}
\int \X |u|^{p} \leq C \|u\|_{2Np/(N+\alpha)}^{2p}.
\end{equation}
\item \label{itY2} If 
\begin{equation}
\label{condonq}
\frac{N(2p-1)}{N+\alpha}
\leq q <
\frac{Np}{\alpha}
\end{equation} 
and $u\in L^q (\mathbb{R}^N)$, then
\begin{equation}
\label{Lr}
(\mathcal{K}_{\alpha} * |u|^p)|u|^{p-2}u \in L^r(\mathbb{R}^N)
\quad
\hbox{for}
\quad
\frac{1}{r}=\frac{2p-1}{q}-\frac{\alpha}{N}.
\end{equation}
In particular \eqref{Lr} defines a function $r=r(q)$ which is strictly increasing
and maps $[N(2p-1)/(N+\alpha),Np/\alpha)$ onto $[1,Np/(\alpha(p-1)))$.

\item \label{itY3} For every $u\in \Hs$
\begin{equation}
\label{consfrac}
\int \X |u|^{p}  \leq C \|(-\Delta)^{s/2}u\|_{2}^{2\beta p} \|u\|_{2}^{2(1-\beta)p},
\qquad
\beta=\frac{Np-N-\alpha}{2sp}.
\end{equation}
\end{enumerate}  
\end{lemma}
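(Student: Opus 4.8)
The plan is to derive all three statements from the Hardy--Littlewood--Sobolev (HLS) inequality, the boundedness of the Riesz potential, and the fractional Gagliardo--Nirenberg inequality \eqref{FracGN}, treating the items in the order (i), (iii), (ii). For part (i), the membership $2Np/(N+\alpha)\in(2,2^{*}_{s})$ is just bookkeeping with \eqref{p}: the left inequality is equivalent to $Np>N+\alpha$ and the right one to $p(N-2s)<N+\alpha$. To obtain \eqref{HL} I would apply HLS to
\[
\int\X|u|^{p}=\iint\frac{|u(x)|^{p}\,|u(y)|^{p}}{|x-y|^{N-\alpha}}\,dx\,dy,
\]
with each factor $|u|^{p}$ taken in $L^{2N/(N+\alpha)}$; this is the admissible choice since $\frac{N+\alpha}{2N}+\frac{N+\alpha}{2N}+\frac{N-\alpha}{N}=2$ and $2N/(N+\alpha)\in(1,\infty)$ because $N>\alpha$. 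HLS then yields $\int\X|u|^{p}\le C\,\||u|^{p}\|_{2N/(N+\alpha)}^{2}=C\,\|u\|_{2Np/(N+\alpha)}^{2p}$, and the right-hand side is finite for $u\in\Hs$ by the first assertion and the Sobolev embedding.

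Part (iii) then follows at once by feeding \eqref{FracGN} into \eqref{HL}: choosing there $q=2Np/(N+\alpha)$ (allowed by part (i)) and solving the interpolation relation $\frac{N+\alpha}{2Np}=\frac{\beta}{2^{*}_{s}}+\frac{1-\beta}{2}$ produces precisely $\beta=\frac{Np-N-\alpha}{2sp}$, with $\beta\in(0,1)$ exactly by \eqref{p}; raising $\|u\|_{2Np/(N+\alpha)}\le C\,\|(-\Delta)^{s/2}u\|_{2}^{\beta}\|u\|_{2}^{1-\beta}$ to the power $2p$ and combining with \eqref{HL} gives \eqref{consfrac}.

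For part (ii) I would proceed in two steps. First, if $u\in L^{q}$ then $|u|^{p}\in L^{q/p}$, and since $q<Np/\alpha$ one has $\frac{\alpha}{N}<\frac{p}{q}$, so the mapping property of the Riesz potential --- HLS for $\mathcal K_{\alpha}*\,\cdot\,$, or, at the endpoint, the splitting $\mathcal K_{\alpha}=\mathcal K_{\alpha}\mathbf 1_{\{|x|\le 1\}}+\mathcal K_{\alpha}\mathbf 1_{\{|x|>1\}}$ together with Young's convolution inequality (whence the name of the lemma) --- gives $\X\in L^{m}$ with $\frac1m=\frac pq-\frac\alpha N$. Second, $|u|^{p-2}u\in L^{q/(p-1)}$, so Hölder's inequality yields $\X|u|^{p-2}u\in L^{r}$ with $\frac1r=\frac1m+\frac{p-1}q=\frac{2p-1}q-\frac\alpha N$; the lower bound $q\ge N(2p-1)/(N+\alpha)$ is exactly what makes $r\ge 1$. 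The statement on $r=r(q)$ is then elementary: $\frac1r=\frac{2p-1}q-\frac\alpha N$ is continuous and strictly decreasing in $q$, so $r$ is strictly increasing; $q=N(2p-1)/(N+\alpha)$ gives $r=1$ and $q\uparrow Np/\alpha$ gives $r\uparrow Np/(\alpha(p-1))$, so the image is $[1,Np/(\alpha(p-1)))$.

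The step I expect to be the main obstacle is the exponent bookkeeping in part (ii): one must check that every index entering the HLS, Young, and Hölder estimates stays in its admissible interval uniformly for $q\in[N(2p-1)/(N+\alpha),Np/\alpha)$, and in particular handle the borderline $r=1$ via the kernel-splitting form of Young's inequality, since HLS degenerates there; an analogous care is needed in the sub-range where $q/p<1$, in which $\mathcal K_{\alpha}*|u|^{p}$ must be estimated after a truncation of $u$.
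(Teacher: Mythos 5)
Your proposal is correct and follows essentially the same route as the paper: (i) is the bilinear Hardy--Littlewood--Sobolev estimate, (ii) is the Riesz-potential mapping property followed by H\"older with the same exponent bookkeeping and the same monotonicity discussion of $r(q)$, and (iii) is (i) combined with the fractional Gagliardo--Nirenberg inequality \eqref{FracGN} at $q=2Np/(N+\alpha)$. The endpoint caveat you flag (the case $r=1$ and the subrange where $q/p\leq 1$, where the Riesz-potential bound degenerates and a truncation of the kernel or of $u$ would be needed) is in fact glossed over by the paper as well, whose proof only verifies that the target exponent $t$ of the convolution exceeds one before applying H\"older.
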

\begin{proof}
Property (\ref{itY1}) is trivial. In order to prove \eqref{Lr}, let $q$ be as in \eqref{condonq} and $u\in L^q (\mathbb{R}^N)$. Using Hardy-Littlewood-Sobolev inequality we have that
\[
\mathcal{K}_{\alpha} * |u|^p\in L^t(\mathbb{R}^N)
\quad
\hbox{with}
\quad\frac{1}{t}=\frac{p}{q}-\frac{\alpha}{N}.
\]
Since $q<Np/\alpha$, then $t>0$. Moreover, since $p>1$, then 
\[
\frac{Np}{N+\alpha}<\frac{N(2p-1)}{N+\alpha}
\] 
and so $t>1$. Hence, since for $p>1$,
\[
\frac{Np}{\alpha}<\frac{N(2p-1)}{\alpha},
\]
by using H\"older inequality we get \eqref{Lr}.
Finally (\ref{itY3}) easily follows from  by \eqref{HL} and \eqref{FracGN}. 
\end{proof}
\noindent 
The next result is an adaptation of a classical lemma of Lions and it is crucial in the proofs of the existence theorems.
\begin{lemma}
\label{lemmacc}
Let $q\in[2,2^*_s]$. For every $u\in H^s(\mathbb{R}^N)$ we have that
\begin{equation*}
\|u\|_q^q \leq
C \left(\sup_{x\in\mathbb{R}^N} \int_{B_1(x)} |u|^q\right)^{1-\frac{2}{q}} \|u\|^2.
\end{equation*}
\end{lemma}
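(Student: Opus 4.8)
The plan is to prove this ``local mass concentration'' inequality as an interpolation estimate combining a covering argument with the fractional Sobolev embedding. First I would fix a locally finite cover of $\R^N$ by the unit balls $\{B_1(x_i)\}_{i\in\mathbb{N}}$ with the bounded-overlap property, i.e. so that every point of $\R^N$ belongs to at most $M=M(N)$ of the balls; this can be arranged for instance by taking a lattice of centers of mesh comparable to $1$. Write $\Omega_i=B_1(x_i)$.

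\medskip

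The key step is to estimate $\int_{\Omega_i}|u|^q$ for a fixed $i$. The exponent $q$ lies in $[2,2_s^*]$, so by the fractional Sobolev embedding on the ball $\Omega_i$ (equivalently, on $\R^N$ after a smooth cutoff supported near $\Omega_i$, using the bounded overlap to control the Gagliardo seminorm of the cutoff function) one has
\begin{equation*}
\int_{\Omega_i}|u|^q \le \Bigl(\int_{\Omega_i}|u|^q\Bigr)^{1-\frac2q}\Bigl(\int_{\Omega_i}|u|^q\Bigr)^{\frac2q}
\le \Bigl(\sup_{x\in\R^N}\int_{B_1(x)}|u|^q\Bigr)^{1-\frac2q}\,\|u\|_{L^q(\Omega_i)}^{2},
\end{equation*}
and then $\|u\|_{L^q(\Omega_i)}^2 \le C\,\|u\|_{H^s(\widetilde\Omega_i)}^2$ where $\widetilde\Omega_i$ is a fixed neighbourhood of $\Omega_i$ (a slightly larger concentric ball), with $C$ independent of $i$ by translation invariance. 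Here $\|\cdot\|_{H^s(\widetilde\Omega_i)}^2$ denotes the sum of $\|u\|_{L^2(\widetilde\Omega_i)}^2$ and the Gagliardo double integral restricted to $\widetilde\Omega_i\times\widetilde\Omega_i$, plus a tail term from the nonlocal cutoff estimate; all of these are dominated by the full norm $\|u\|^2$ localized to $\widetilde\Omega_i$.

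\medskip

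Finally I would sum over $i$. Using $\sum_i a_i \le (\sum_i a_i)$ and monotonicity to pull the supremum factor out, and then the bounded overlap property to control $\sum_i \|u\|_{H^s(\widetilde\Omega_i)}^2 \le M\,\|u\|^2$ (each portion of the domain being counted a bounded number of times, and the double integral over $\R^N\times\R^N$ likewise majorizing the sum of the local double integrals up to a factor depending only on $N$ and on how much the $\widetilde\Omega_i$ overlap), we get
\begin{equation*}
\|u\|_q^q=\sum_i\int_{\Omega_i}|u|^q
\le \Bigl(\sup_{x\in\R^N}\int_{B_1(x)}|u|^q\Bigr)^{1-\frac2q}\sum_i \|u\|_{L^q(\Omega_i)}^2
\le C\Bigl(\sup_{x\in\R^N}\int_{B_1(x)}|u|^q\Bigr)^{1-\frac2q}\|u\|^2,
\end{equation*}
which is the claim.

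\medskip

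The main obstacle is the nonlocal term: unlike in the classical $H^1$ case, restricting the Gagliardo seminorm to a bounded region and summing does not cleanly reproduce the global seminorm, and the localization via cutoff functions produces cross terms between $\widetilde\Omega_i$ and its complement. I expect the cleanest route is to invoke a known fractional Sobolev inequality on balls (e.g. from \cite{DPV}) in the form $\|u\|_{L^q(B_1)}\le C(\|u\|_{L^2(B_2)}+[u]_{H^s(B_2)})$ as a black box, so that the only thing one checks by hand is the bounded-overlap bookkeeping for the sum $\sum_i [u]_{H^s(\widetilde\Omega_i)}^2\le C[u]_{H^s(\R^N)}^2$, which follows because $[u]_{H^s(\widetilde\Omega_i)}^2=\tfrac{C(N,s)}{2}\iint_{\widetilde\Omega_i\times\widetilde\Omega_i}|u(x)-u(y)|^2|x-y|^{-N-2s}\,dx\,dy$ and the integration domains $\widetilde\Omega_i\times\widetilde\Omega_i$ have bounded overlap inside $\R^N\times\R^N$.
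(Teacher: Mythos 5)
Your argument is correct and is essentially the paper's own proof: on each unit ball you pull out the factor $\bigl(\sup_{x}\int_{B_1(x)}|u|^q\bigr)^{1-\frac2q}$, control the remaining $L^q$ piece by the local fractional Sobolev embedding with the $H^s(B)$-norm, and then sum over a cover of $\R^N$ by unit balls with bounded overlap, observing that the local Gagliardo double integrals add up to at most a fixed multiple of the global seminorm. The only cosmetic difference is that the paper interpolates via H\"older through the intermediate exponent $r=N(q-2)/2s$ and the $L^{2^*_s}$-norm instead of your direct splitting $a=a^{1-2/q}a^{2/q}$, and the cutoff cross-term issue you flag is avoided precisely by the route you yourself indicate, namely invoking the embedding $H^s(B_1)\hookrightarrow L^q(B_1)$ (as in \cite{DPV}) as a black box, which is what the paper does.
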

\begin{proof}
If $q=2$ it is obvious.
Let now $q\in(2,2^*_s]$. Since $r:=N(q-2)/2s\leq q$, 
for a.e. $x\in\mathbb{R}^N$, by \cite[Theorem 6.7]{DPV}, we have
\begin{align*}
\int_{B_1(x)} |u|^q
& \leq
\left(\int_{B_1(x)} |u|^r\right)^{\frac{q}{r}\left(1-\frac{2}{q}\right)}
\left(\int_{B_1(x)} |u|^{2^*_s}\right)^\frac{2}{2^*_s}\\
& \leq
C\left(\int_{B_1(x)} |u|^q\right)^{1-\frac{2}{q}}
\|u\|_{H^s(B_1(x))}^2\\
& \leq
C \left(\sup_{x\in\mathbb{R}^N} \int_{B_1(x)} |u|^q\right)^{1-\frac{2}{q}} \|u\|_{H^s(B_1(x))}^2
\end{align*}
where $\|u\|_{H^s(B_1(x))}^2$ is defined in \cite[Equation (2.2)]{DPV}. Hence, we cover $\mathbb{R}^N$ with balls of radius $1$ in such a way that each point of $\mathbb{R}^N$ is contained in at most $N+1$ balls. This procedure works even if in the $H^s(B_1(x))$-norm there is a nonlocal term (the Gagliardo seminorm) and so we conclude.
\end{proof}

\noindent
With the same procedure of Lemma \ref{lemmacc}, one proves that, for all $u\in H^s(\mathbb{R}^N)$, $2\leq q < 2_{s}^{*}$, and $\sigma>0$,
\[
\|u\|_t^t
\leq
C\left(\sup_{x\in\mathbb{R}^N} \int_{B_\sigma(x)} |u|^q\right)^{\frac{\beta t}{q}} \|u\|^2
\]
where $t=q+2(2^*_s-q)/2^*_s$ and $\beta=q(2^*_s-2)/[q(2^*_s-2)+2\cdot 2^*_s]$ and so one obtains
\begin{lemma}
\label{lionslemma}
If $\{u_{n}\}$ is bounded in $H^{s}(\mathbb R^{N})$ and for some $\sigma>0$ and $2\leq q < 2_{s}^{*}$ we have
$$\sup_{x\in \mathbb R^{n}}\int_{B_\sigma(x)}|u_{n}|^{q}\to 0 \text{ as } n\to \infty,$$
then $u_{n}\to 0$ in $L^{r}(\mathbb R^{N})$ for $2< r <2_{s}^{*}$.
\end{lemma}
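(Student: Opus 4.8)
The plan is to deduce the lemma from the auxiliary estimate displayed just above its statement, together with a one-step interpolation argument. First, I would take as established the inequality
\[
\|u\|_t^t\le C\Big(\sup_{x\in\R^N}\int_{B_\sigma(x)}|u|^q\Big)^{\beta t/q}\|u\|^2,
\qquad
t=q+\frac{2(2_{s}^{*}-q)}{2_{s}^{*}},
\]
valid for the fixed $\sigma>0$ and all $u\in H^s(\R^N)$, with $\beta\in(0,1)$ as in the statement above; it is obtained by the same procedure as in the proof of Lemma~\ref{lemmacc}, writing $|u|^t=|u|^{t-2}\cdot|u|^2$, applying H\"older's inequality on each ball $B_\sigma(x)$ with exponents $q/(t-2)$ and $2_{s}^{*}/2$, bounding the $L^{2_{s}^{*}}(B_\sigma(x))$-factor by the local fractional Sobolev inequality, and summing over a covering of $\R^N$ by balls of radius $\sigma$ of bounded overlap (the nonlocal Gagliardo term poses no problem, as noted there). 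I would then record the elementary facts that $q<t$ and that $t<2_{s}^{*}$, the latter being equivalent to the hypothesis $q<2_{s}^{*}$, so that $t\in(2,2_{s}^{*})$ and $\beta t/q>0$.

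Next I would use that $\{u_n\}$ is bounded in $H^s(\R^N)$: there is $M>0$ with $\|u_n\|\le M$ for all $n$, and hence, by the fractional Sobolev embedding, also $\|u_n\|_2+\|u_n\|_{2_{s}^{*}}\le M$ after enlarging $M$. Inserting $u=u_n$ in the auxiliary estimate and using the assumption $\sup_x\int_{B_\sigma(x)}|u_n|^q\to 0$, the right-hand side vanishes as $n\to\infty$, so $u_n\to 0$ in $L^t(\R^N)$.

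Finally I would propagate this $L^t$-convergence to every $L^r$ with $r\in(2,2_{s}^{*})$ by interpolation of Lebesgue norms. If $2<r\le t$, I pick $\theta\in[0,1)$ with $\tfrac1r=\tfrac\theta2+\tfrac{1-\theta}{t}$ and get $\|u_n\|_r\le\|u_n\|_2^\theta\,\|u_n\|_t^{1-\theta}\le M^\theta\,\|u_n\|_t^{1-\theta}\to 0$; if $t\le r<2_{s}^{*}$, I pick $\theta\in(0,1]$ with $\tfrac1r=\tfrac\theta t+\tfrac{1-\theta}{2_{s}^{*}}$ and get $\|u_n\|_r\le\|u_n\|_t^\theta\,\|u_n\|_{2_{s}^{*}}^{1-\theta}\le M^{1-\theta}\,\|u_n\|_t^\theta\to 0$. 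Since $t$ lies strictly between $2$ and $2_{s}^{*}$, the two ranges $(2,t]$ and $[t,2_{s}^{*})$ together cover $(2,2_{s}^{*})$, which finishes the proof.

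I do not expect a genuine obstacle here: essentially all the content is in the auxiliary estimate, after which the deduction is routine. The only matters needing a little care are the exponent bookkeeping in re-deriving that estimate (checking that $q/(t-2)$ and $2_{s}^{*}/2$ are conjugate H\"older exponents both $\ge 1$, which uses $2\le q<2_{s}^{*}$, and that the overlapping-ball sum converges exactly as in Lemma~\ref{lemmacc}) and the verification that $t$ is strictly interior to $(2,2_{s}^{*})$, which is what makes the two interpolation brackets catch every admissible $r$.
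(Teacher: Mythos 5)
Your argument is correct and follows essentially the same route as the paper: the paper establishes exactly the auxiliary estimate you cite (by the procedure of Lemma \ref{lemmacc}) and then obtains the lemma from it, leaving implicit the $L^t$-convergence and the interpolation with the bounded $L^2$- and $L^{2_s^*}$-norms that you spell out. Your exponent bookkeeping (conjugacy of $q/(t-2)$ and $2_s^*/2$, and $t\in(2,2_s^*)$ being equivalent to $q<2_s^*$) matches the paper's choice of $t$ and $\beta$, so there is nothing to correct.
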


\section{Regularity and asymptotics}\label{sec:reg}

\noindent
In this section we want to show that any $\Hs$-solution of \eqref{equomega} is indeed regular
as well as the asymptotic profile.
Let us recall the definition of the fractional Sobolev spaces for $q\geq 1$ and $\beta\geq0$:
\begin{equation}
\label{defW}
\mathcal{W}^{\beta,q}=\{u\in L^q(\mathbb{R}^N) \vert \mathcal{F}^{-1}[(1+|\xi|^\beta)\mathcal{F}u ]\in L^q(\mathbb{R}^N) \}
\end{equation}
(see \cite{Stein} for more details) and the following results \cite[Theorem 3.2]{FQT}.
\begin{theorem}
\label{richiamisob}
We have:
\begin{enumerate}[label=(\roman*),ref=\roman*]
\item \label{contemb} If $\beta\geq 0$ and either $1<r\leq q \leq r^*_\beta:=Nr/(N-\beta r)<+\infty$ or $r=1$ and $1 \leq q < N/(N-\beta)$, we have that $\mathcal{W}^{\beta,r}$ is continuously embedded in $L^q(\mathbb{R}^N)$.
\item \label{Memb} Assume that $0\leq\beta\leq 2$ and $\beta > N/r$. If $\beta-N/r <1$ and $0<\mu\leq\beta - N/r$ then $\mathcal{W}^{\beta,r}$ is continuously embedded in $C^{0,\mu}(\mathbb{R}^N)$. If $\beta-N/r >1$ and $0<\mu\leq\beta - N/r - 1$ then $\mathcal{W}^{\beta,r}$ is continuously embedded in $C^{1,\mu}(\mathbb{R}^N)$. 
\end{enumerate}
\end{theorem}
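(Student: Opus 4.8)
The plan is to deduce both embeddings from mapping properties of the Bessel potential. First I would identify $\mathcal W^{\beta,r}$ with the Bessel potential space $\mathcal G_\beta*L^r(\mathbb R^N)$, where $\mathcal G_\beta:=\mathcal F^{-1}[(1+|\xi|^2)^{-\beta/2}]$ is the Bessel kernel: the two transfer multipliers $(1+|\xi|^\beta)(1+|\xi|^2)^{-\beta/2}$ and its reciprocal are bounded above and below, equal $1+o(1)$ at the origin and at infinity, and obey H\"ormander--Mikhlin bounds --- the only potential trouble, the $|\xi|^\beta$ singularity at $\xi=0$, being harmless since $\partial^\gamma|\xi|^\beta=O(|\xi|^{\beta-|\gamma|})$ and $\beta\ge0$ --- so the two norms are equivalent for $1<r<\infty$, while for $r=1$ one checks directly that the inverse Fourier transforms of these multipliers are finite measures. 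Next I would record the classical pointwise bounds on $\mathcal G_\beta$, read off from its subordination representation: for $0<\beta<N$ one has $\mathcal G_\beta(x)\le C|x|^{\beta-N}$ and $|\nabla\mathcal G_\beta(x)|\le C|x|^{\beta-1-N}$ for $|x|\le1$ (with logarithmic, resp.\ bounded, behaviour when $\beta=N$, resp.\ $\beta>N$), while $\mathcal G_\beta$ and $\nabla\mathcal G_\beta$ decay exponentially at infinity; in particular, for $0<\beta<N$, $\mathcal G_\beta\in L^\rho(\mathbb R^N)$ for every $\rho\in[1,N/(N-\beta))$ and $\mathcal G_\beta\in L^{N/(N-\beta),\infty}(\mathbb R^N)$.

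For part (\ref{contemb}), write $u=\mathcal G_\beta*f$ with $f\in L^r$. If $q<r^*_\beta$, choose $\rho$ by $1+1/q=1/\rho+1/r$; then $1/q>1/r-\beta/N$ forces $1\le\rho<N/(N-\beta)$, hence $\mathcal G_\beta\in L^\rho$ and Young's inequality gives $\|u\|_q\le\|\mathcal G_\beta\|_\rho\,\|f\|_r$. For $r=1$ the asserted range $q<N/(N-\beta)$ is precisely $q<r^*_\beta$, so this case is already included. It remains to treat the endpoint $q=r^*_\beta$, which forces $r>1$ and $\beta r<N$: here I would split $\mathcal G_\beta\le C\bigl(|x|^{\beta-N}\chi_{\{|x|\le1\}}+e^{-|x|/2}\chi_{\{|x|>1\}}\bigr)$, estimate the convolution with the first, Riesz-type piece by the Hardy--Littlewood--Sobolev inequality ($L^r\to L^{r^*_\beta}$, valid because $1<r<N/\beta$) and the convolution with the second, $L^1$, piece by Young's inequality, and add the two.

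For part (\ref{Memb}), assume $0\le\beta\le2$ and $\beta>N/r$ (so automatically $\beta<N$ since $N\ge3$), and again set $u=\mathcal G_\beta*f$, $f\in L^r$. As $\beta>N/r$ is equivalent to $r'<N/(N-\beta)$, we have $\mathcal G_\beta\in L^{r'}$, whence $u\in L^\infty$ by H\"older, and, being the convolution of an $L^{r'}$ with an $L^r$ function, $u$ is continuous. When $\mu:=\beta-N/r\in(0,1)$, for $|h|\le1$ I would estimate
\[
|u(x+h)-u(x)|\le\|f\|_r\,\|\mathcal G_\beta(\cdot+h)-\mathcal G_\beta(\cdot)\|_{r'}
\]
and bound $\|\mathcal G_\beta(\cdot+h)-\mathcal G_\beta\|_{r'}\le C|h|^{\beta-N/r}$ by splitting the integral at $|z|=2|h|$: over $\{|z|\le2|h|\}$ one uses $\mathcal G_\beta(z)\lesssim|z|^{\beta-N}$, whose $r'$-th power is integrable near $0$ exactly because $\beta>N/r$; over $\{|z|>2|h|\}$ one uses the mean value theorem together with $|\nabla\mathcal G_\beta(z)|\lesssim|z|^{\beta-1-N}$ near $0$ and the exponential tail. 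This yields $u\in C^{0,\mu}(\mathbb R^N)$, and smaller exponents $\mu$ follow since $u\in L^\infty$. When instead $\beta-N/r>1$: since $\partial_j\mathcal G_\beta$ has the same qualitative profile as $\mathcal G_{\beta-1}$ and $\beta-1>N/r$, the convolution $(\partial_j\mathcal G_\beta)*f$ is continuous and equals $\partial_j u$; applying the previous step with $\beta$ replaced by $\beta-1$ --- legitimate since $\beta-1>N/r$ and, as $\beta\le2$, $(\beta-1)-N/r<1$ --- gives $\partial_j u\in C^{0,(\beta-1)-N/r}(\mathbb R^N)$, i.e. $u\in C^{1,\mu}(\mathbb R^N)$ for $0<\mu\le\beta-1-N/r$.

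The step I expect to be most delicate is the critical exponent $q=r^*_\beta$ in (\ref{contemb}): there $\mathcal G_\beta$ just fails to lie in $L^{N/(N-\beta)}(\mathbb R^N)$, so Young's inequality is insufficient and one genuinely needs the Hardy--Littlewood--Sobolev inequality (equivalently a weak-type/Lorentz-space convolution estimate) for the near-origin part of the kernel. The remaining ingredient with real content is the sharp pointwise control of $\mathcal G_\beta$ and $\nabla\mathcal G_\beta$; once these --- together with the routine bookkeeping for the degenerate cases $\beta\ge N$ and $r=1$ --- are in hand, the rest is kernel splitting combined with Young, H\"older and Hardy--Littlewood--Sobolev. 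A shorter route would be to identify $\mathcal W^{\beta,r}$ with the Triebel--Lizorkin space $F^{\beta}_{r,2}(\mathbb R^N)$ and invoke the standard Sobolev- and H\"older-type embeddings for those scales, but the self-contained argument above is the one I would carry out.
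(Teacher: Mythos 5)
Your argument is correct, but note that the paper does not prove this statement at all: Theorem \ref{richiamisob} is imported verbatim as \cite[Theorem 3.2]{FQT}, whose proof in turn rests on the classical theory of Bessel potentials in \cite{Stein}. Your proposal is essentially a self-contained reconstruction of that classical route, and all the main steps check out: the transference between the multiplier $1+|\xi|^\beta$ used in \eqref{defW} and the Bessel multiplier $(1+|\xi|^2)^{\beta/2}$ is exactly Stein's lemma that both quotient multipliers are Fourier transforms of finite measures (this covers every $1\le r\le\infty$ at once, so you do not even need Mikhlin for $1<r<\infty$); the subcritical range $q<r^*_\beta$ follows from Young with $\mathcal G_\beta\in L^\rho$, $\rho<N/(N-\beta)$; the endpoint $q=r^*_\beta$, $r>1$, needs the Hardy--Littlewood--Sobolev inequality for the near-origin piece $|x|^{\beta-N}$ plus Young for the exponentially decaying tail, as you say; and your splitting at $|z|=2|h|$ gives $\|\mathcal G_\beta(\cdot+h)-\mathcal G_\beta\|_{r'}\le C|h|^{\beta-N/r}$ precisely under the hypotheses $\beta>N/r$ (integrability of $|z|^{(\beta-N)r'}$ near $0$) and $\beta-N/r<1$ (convergence of the mean-value-theorem piece), which is where those hypotheses enter. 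Two points are left implicit and should be made explicit if you write this up: (a) the $C^{1,\mu}$ case uses not only $|\nabla\mathcal G_\beta(z)|\lesssim|z|^{\beta-1-N}$ but also the second-order bound $|\nabla\partial_j\mathcal G_\beta(z)|\lesssim|z|^{\beta-2-N}$ near the origin (this is what ``$\partial_j\mathcal G_\beta$ has the same profile as $\mathcal G_{\beta-1}$'' must mean), together with the justification $\partial_j(\mathcal G_\beta*f)=(\partial_j\mathcal G_\beta)*f$, which is legitimate because $\partial_j\mathcal G_\beta\in L^1\cap L^{r'}$ when $\beta-1>N/r$; (b) in part (\ref{contemb}) the degenerate cases $\beta=0$ and $r=1$ with $q<N/(N-\beta)$ are indeed trivial once the kernel is known to lie in $L^\rho$ for all $\rho<N/(N-\beta)$, consistent with the statement excluding the endpoint when $r=1$. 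With these details supplied, your proof is a complete substitute for the citation, and the alternative you mention (identifying $\mathcal W^{\beta,r}$ with $F^{\beta}_{r,2}$ and quoting Triebel--Lizorkin embeddings) would be shorter but less elementary than what the paper's source actually does.
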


\noindent
We prove the following
\begin{theorem}
\label{thregularity}
Let $u$ be a solution of \eqref{equomega}. 
If $s\leq 1/2$, then $u\in L^1 (\mathbb{R}^N) \cap C^{0,\mu}(\mathbb{R}^N)$ for $\mu\in(0,2s)$. If $s>1/2$, then $u\in  L^1 (\mathbb{R}^N) \cap C^{1,\mu}(\mathbb{R}^N)$ for $\mu\in(0,2s- 1)$.
\end{theorem}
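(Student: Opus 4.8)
The plan is to run a bootstrap argument in the scale of Bessel-potential spaces $\mathcal{W}^{\beta,q}$, using the solution formula $u=(-\Delta)^{-s}\circ(\text{lower order})$ heuristically, or more precisely: since $u\in H^s=\mathcal{W}^{s,2}$ solves $(-\Delta)^s u+\omega u=g$ with $g:=\X|u|^{p-2}u$, applying the Bessel-potential operator of order $2s$ we get $u\in\mathcal{W}^{2s,r}$ whenever $g\in L^r$. So the engine is: control the integrability of $g$ via Lemma~\ref{Young}(\ref{itY2}), feed the resulting $u\in\mathcal{W}^{2s,r}$ through the embedding Theorem~\ref{richiamisob}(\ref{contemb}) to improve the $L^q$-integrability of $u$, and iterate. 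First I would record the starting point: by Lemma~\ref{Young}(\ref{itY1}), $u\in L^{2Np/(N+\alpha)}$, and $q_0:=2Np/(N+\alpha)$ lies in the interval $[N(2p-1)/(N+\alpha),Np/\alpha)$ where part (\ref{itY2}) applies (one checks $q_0\ge N(2p-1)/(N+\alpha)$ is exactly $p\ge 1$, and $q_0<Np/\alpha$ follows from $p<(N+\alpha)/(N-2s)$ together with $N>2s$, or more simply from $2\alpha<N+\alpha$). Hence $g\in L^{r_0}$ with $r_0=r(q_0)\in[1,Np/(\alpha(p-1)))$.

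The heart of the argument is the iteration. Given $u\in L^{q_k}$ with $q_k$ in the domain of $r(\cdot)$, we obtain $g\in L^{r_k}$, hence $u\in\mathcal{W}^{2s,r_k}$, hence by Theorem~\ref{richiamisob}(\ref{contemb}) $u\in L^{q}$ for all $q$ up to the Sobolev exponent $(r_k)^*_{2s}=Nr_k/(N-2s r_k)$ (or all $q<\infty$ if $2s r_k\ge N$). One then shows this gain strictly increases the exponent, i.e.\ one can pick $q_{k+1}>q_k$ staying below $Np/\alpha$, and that in finitely many steps either $q_k$ exceeds $Np/\alpha$ — which forces $r_k$ to be arbitrarily large / the map to push $u$ into $L^\infty$-type spaces — or directly $2sr_k>N$, landing $u\in\mathcal{W}^{2s,r}$ with $2s>N/r$. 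At that stage Theorem~\ref{richiamisob}(\ref{Memb}) applies with $\beta=2s$: if $2s\le 1$ (i.e.\ $s\le1/2$) we have $2s-N/r<1$, so $u\in C^{0,\mu}$ for $\mu\le 2s-N/r$, and taking $r$ large gives any $\mu\in(0,2s)$; if $2s>1$ (i.e.\ $s>1/2$) then for $r$ large $2s-N/r>1$ and we get $u\in C^{1,\mu}$ for any $\mu\in(0,2s-1)$. The $L^1$ conclusion: once $u$ is bounded and $u\in L^{q_0}$ with $q_0>1$, we still need decay; here I would instead note that $g\in L^{r_0}$ with $r_0$ close to $1$ — in fact the range of $r(\cdot)$ starts at $1$ — and combine $u=(-\Delta+\omega)^{-1}g$ (Bessel kernel of order $2s$, which is in $L^1$) with $g\in L^1\cap L^\infty$, so that $u$ is a convolution of two $L^1$ functions, giving $u\in L^1$; alternatively, bootstrap $g$ down to $L^1$ using that $q_0$ itself may be taken with $r(q_0)$ near $1$ and Young's inequality for the Bessel potential.

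The main obstacle is bookkeeping the iteration so that it genuinely closes in finitely many steps across the \emph{entire} admissible range \eqref{p} of $p$ — in particular verifying that at each step the new exponent $q_{k+1}$ can be chosen strictly larger and that one never gets stuck just below the threshold $Np/\alpha$ without crossing it. This requires a quantitative estimate on how much $r(q)$, and then $(r(q))^*_{2s}$, exceeds $q$; the gain degenerates near the endpoints, so one should argue that the fixed point of the map $q\mapsto (r(q))^*_{2s}$ lies above $Np/\alpha$, equivalently that no finite $L^q$-bound is consistent with the equation unless $u$ is already bounded — a standard but slightly delicate monotonicity computation. A secondary subtlety is that $\mathcal{W}^{2s,r}$-regularity from the equation requires $2s\le 2$, which holds since $s<1$, so Theorem~\ref{richiamisob}(\ref{Memb}) is applicable as stated.
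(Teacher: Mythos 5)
Your overall engine is the same as the paper's (Hardy--Littlewood--Sobolev via Lemma~\ref{Young}(\ref{itY2}), Bessel-potential regularity to pass from $g:=(\mathcal K_\alpha*|u|^p)|u|^{p-2}u\in L^r$ to $u\in\mathcal W^{2s,r}$, Sobolev embedding from Theorem~\ref{richiamisob}(\ref{contemb}), then the H\"older embedding (\ref{Memb})), but the step on which the full H\"older range rests is not justified. You claim the upward iteration eventually puts $u$ in $\mathcal W^{2s,r}$ for arbitrarily large $r$ (``taking $r$ large gives any $\mu\in(0,2s)$''), yet the only mechanism you invoke is the map $q\mapsto r(q)$, which is defined solely for $q<Np/\alpha$ and whose range is capped at $Np/(\alpha(p-1))$. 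The iteration therefore saturates: it yields $u\in\mathcal W^{2s,r}$ only for $r<Np/(\alpha(p-1))$, and this cap can even lie below $N/(2s)$ inside the range \eqref{p} (e.g.\ $N=3$, $\alpha=2.9$, $s=0.1$, $p=2$ gives $Np/(\alpha(p-1))\approx 2.07$ while $N/(2s)=15$), in which case your argument produces no H\"older regularity at all; and even when the cap exceeds $N/(2s)$ it only gives $\mu$ up to $2s-\alpha(p-1)/p$, not all $\mu<2s$. The paper closes this with a genuinely separate second stage (Lemma~\ref{leWr}): after the first bootstrap one has $u\in L^q$ for all $q\in[1,(r_0)^*_{2s})$ with $(r_0)^*_{2s}>Np/\alpha$, whence $\mathcal K_\alpha*|u|^p\in L^\infty$ by splitting the kernel; the nonlinearity is then controlled by $|u|^{p-2}u$ alone, and a new iteration (with distinct termination computations for $p<2$, $p=2$, $p>2$) gives $u\in\mathcal W^{2s,r}$ for \emph{every} $r>1$, which is what Theorem~\ref{richiamisob}(\ref{Memb}) needs to deliver all $\mu\in(0,2s)$, resp.\ $(0,2s-1)$.

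The $L^1$ claim is also not established as you state it. You need $g\in L^1$, i.e.\ $u\in L^{N(2p-1)/(N+\alpha)}$, but $r(q_0)$ with $q_0=2Np/(N+\alpha)$ is never $1$ (a direct computation gives $1/r(q_0)=1-(N+\alpha)/(2Np)<1$), and you cannot ``take $q_0$ with $r(q_0)$ near $1$'': the admissible starting exponents from $H^s$ are $[2,2^*_s]$, and $N(2p-1)/(N+\alpha)\geq 2$ only when $p\geq 3/2+\alpha/N$, so for general $p$ in \eqref{p} no single step reaches $g\in L^1$. What is needed is a \emph{downward} iteration, which the paper carries out in Lemma~\ref{LqWr}: since $p>1+\alpha/N$ one has $r(q_0)<q_0$, so the embedding $\mathcal W^{2s,r}\hookrightarrow L^r$ lowers the Lebesgue exponent of $u$ at each step, and the explicit formula $1/q_i=(2p-1)^i\bigl(1/q_0-\alpha/(2N(p-1))\bigr)+\alpha/(2N(p-1))$ shows the threshold $N(2p-1)/(N+\alpha)$ is reached in finitely many steps; then $g\in L^1$, $u\in\mathcal W^{2s,1}\hookrightarrow L^1$. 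Note this $L^1$/low-exponent information is not an add-on: it is also used in Lemma~\ref{leWr} to get $\mathcal K_\alpha*|u|^p\in L^\infty$ (the kernel splitting needs $|u|^p$ in Lebesgue spaces on both sides of $N/(N-\alpha)$), so without the downward bootstrap both of your conclusions remain open.
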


\begin{lemma}
\label{LqWr}
Let $u\in H^s(\mathbb{R}^N)$ be a solution of \eqref{equomega}. Then for every $q\geq 1$ such that
\[
\frac{1}{q}>\frac{\alpha}{N}\left(1-\frac{1}{p}\right)-\frac{2s}{N}
\] 
we have that $u\in L^q (\mathbb{R}^N)$. Moreover,  for every $r>1$ such that
\[
\frac{1}{r}>\frac{\alpha}{N}\left(1-\frac{1}{p}\right)
\]
we have that $u \in \mathcal{W}^{2s,r}$.
\end{lemma}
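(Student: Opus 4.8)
The plan is a bootstrap argument. First I would rewrite a solution $u$ of \eqref{equomega} in fixed point form as $u=\mathcal G*\big(\X|u|^{p-2}u\big)$, where $\mathcal G$ is the convolution kernel of the resolvent $(\omega+\Ds)^{-1}$, i.e. $\mathcal G$ has Fourier transform $(\omega+|\xi|^{2s})^{-1}$. I would rely on two standard facts about $\mathcal G$: it is a nonnegative integrable function (a Bessel-type potential, with $\|\mathcal G\|_{1}=1/\omega$), so that convolution with $\mathcal G$ sends $L^{1}(\R^{N})$ into $L^{1}(\R^{N})$ by Young's inequality; and for $1<r<\infty$ it sends $L^{r}(\R^{N})$ into the space $\mathcal W^{2s,r}$ of \eqref{defW}, since
\[
\mathcal F^{-1}\big[(1+|\xi|^{2s})\,\mathcal F u\big]
=\mathcal F^{-1}\Big[\frac{1+|\xi|^{2s}}{\omega+|\xi|^{2s}}\;\mathcal F\big(\X|u|^{p-2}u\big)\Big]
\]
and $m(\xi)=(1+|\xi|^{2s})/(\omega+|\xi|^{2s})$ is a H\"ormander--Mikhlin multiplier.

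The elementary bootstrap step then reads: if $u\in L^{q}(\R^{N})$ with $q$ as in \eqref{condonq}, then by Lemma~\ref{Young} one has $\X|u|^{p-2}u\in L^{r}(\R^{N})$ with $1/r=(2p-1)/q-\alpha/N$; plugging this into the mapping properties above gives either $u\in L^{1}(\R^{N})$ (if $r=1$) or $u\in\mathcal W^{2s,r}$ (if $r>1$), and in the latter case Theorem~\ref{richiamisob}~(\ref{contemb}) with $\beta=2s$ promotes this to $u\in L^{q'}(\R^{N})$ for $1/q'\geq\max\{0,\,1/r-2s/N\}$. (When $1/r-2s/N\leq0$ one is in the supercritical Sobolev range and Theorem~\ref{richiamisob} yields a H\"older bound, hence $u\in L^{\infty}(\R^{N})$, and there is nothing more to prove.) Writing $x=1/q$, the improvement is governed by the affine, expanding map $x\mapsto(2p-1)x-(\alpha+2s)/N$.

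I would run this starting from $u\in\Hs\subset L^{q}(\R^{N})$ for $q\in[2,2_{s}^{*}]$. The key observation is that \eqref{p} is precisely equivalent to $2_{s}^{*}>N(2p-2)/(\alpha+2s)$, which is exactly the statement that the above map strictly decreases $x=1/q$ at each stage; moreover $p>1+\alpha/N$ together with $\alpha<N$ keep $q$ inside the window \eqref{condonq} until it overtakes $Np/\alpha$, while $L^{2}(\R^{N})$ is always available for interpolation. After finitely many steps one obtains $u\in L^{q}(\R^{N})$ for every $q\in[2,Np/\alpha)$. Feeding exponents $q\uparrow Np/\alpha$ into Lemma~\ref{Young}, the conjugate exponent $r=r(q)$ increases up to $r_{\max}:=Np/(\alpha(p-1))$, whose reciprocal is exactly $\tfrac{\alpha}{N}\big(1-\tfrac1p\big)$; this delivers $u\in\mathcal W^{2s,r}$ for all $r\in(1,r_{\max})$, which is the second assertion. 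A further use of Theorem~\ref{richiamisob}~(\ref{contemb}) (with $q=r$ to recover integrability below $2$, and with $q=r^{*}_{2s}$ to gain above it), together with interpolation, gives $u\in L^{q}(\R^{N})$ for every $q\geq1$ with $1/q>\tfrac{\alpha}{N}\big(1-\tfrac1p\big)-\tfrac{2s}{N}$, the first assertion. Finally, since $N(2p-1)/(N+\alpha)>1$ belongs to this range, Lemma~\ref{Young} shows $\X|u|^{p-2}u\in L^{1}(\R^{N})$, whence $u=\mathcal G*(\X|u|^{p-2}u)\in L^{1}(\R^{N})$.

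The hard part is not conceptual but the exponent bookkeeping: one has to check that each inequality needed for the iteration to start, to keep improving, and to remain inside \eqref{condonq} is a consequence of \eqref{p} and $\alpha<N$, and to organise the borderline cases in which the intermediate Sobolev exponent is critical or supercritical --- which are, in fact, the favourable ones, since they immediately force boundedness. A minor technical point is verifying the Mikhlin bounds $|\partial^{\gamma}m(\xi)|\lesssim|\xi|^{-|\gamma|}$ for $m(\xi)=(1+|\xi|^{2s})/(\omega+|\xi|^{2s})$ when $s$ is not a half-integer.
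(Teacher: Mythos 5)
Your proposal is correct and is essentially the paper's own argument: both bootstrap by combining Lemma \ref{Young}, the fact that the resolvent (Bessel-type) operator maps $L^r(\mathbb{R}^N)$ into $\mathcal{W}^{2s,r}$, and the Sobolev embeddings of Theorem \ref{richiamisob}, with the two bounds in \eqref{p} making the affine maps in $1/q$ progress and forcing the iteration to stop at the limiting exponents $N(2p-1)/(N+\alpha)$ and $Np/\alpha$. The paper merely organizes the same iteration through explicit two-sided sequences $q_i,q^i$ with closed-form expressions for $1/q_i,1/q^i$ certifying termination in finitely many steps; in your sketch the downward gain of exponents below $2$ (needed when $N(2p-1)/(N+\alpha)<2$) must likewise be iterated before one may claim $u\in\mathcal{W}^{2s,r}$ for all $r\in\bigl(1,Np/(\alpha(p-1))\bigr)$, exactly as the paper's decreasing sequence $q_i$ does.
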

\begin{proof}
Let us consider $q_0=2Np/(N+\alpha)$. Since $u\in H^s(\mathbb{R}^N)$, by Sobolev  embeddings we have that $u\in L^{q_0} (\mathbb{R}^N)$. Moreover by \eqref{itY2} of
 Lemma \ref{Young} we have that $(\mathcal{K}_{\alpha} * |u|^p)|u|^{p-2}u \in L^{r_0}(\mathbb{R}^N)$ with $1/r_0=(2p-1)/q_0-\alpha/N$. 
 Thus, since the Bessel operator preserves the Lebesgue spaces (see \cite{Stein}) and by \eqref{defW} we have that $u\in\mathcal{W}^{2s,r_0}$. Then, by Sobolev embedding in (\ref{contemb}) of Theorem \ref{richiamisob}, $u\in L^{q}(\mathbb{R}^N)$ for every $q\in[r_0,(r_0)^*_{2s}]$, i.e. for every $q$ such that
\[
\left(\frac{\alpha}{N}\left(1-\frac{1}{p}\right)-\frac{2s}{N}<\right)\frac{1}{r_0}-\frac{2s}{N} \leq \frac{1}{q} \leq \frac{1}{r_0}(<1).
\]
Hence let us define
\[
q_1:=\max\left\{r_0,\frac{N(2p-1)}{N+\alpha}\right\}
\quad
\hbox{and}
\quad
q^1:=\min\left\{ (r_0)^*_{2s}, \frac{Np}{\alpha}\right\}.
\]
It is easy to see that $q_0\in[q_1,q^1[$. Moreover, since for every $q\in [q_1,q^1[$ we have $u\in L^{q}(\mathbb{R}^N)$, then $(\mathcal{K}_{\alpha} * |u|^p)|u|^{p-2}u \in L^{r}(\mathbb{R}^N)$ and so $u\in\mathcal{W}^{2s,r}$ for every $r\in[r(q_1),r(q^1)[$,
where the map $r=r(q)$   has been defined in  \eqref{itY2} of Lemma \ref{Young}.
Hence by Sobolev embeddings and again by \eqref{itY2} of Lemma \ref{Young}, $u\in L^{q}(\mathbb{R}^N)$ for every $q\in[r(q_1),(r(q^1))^*_{2s}[$. If $r(q_1)=1$, namely $q_1=N(2p-1)/(N+\alpha)$, we stop here from the left hand side of the interval of $q$'s. Analogously, if $1/(r(q^1))^*_{2s}=\alpha/N(1-1/p)-2s/N$, namely $q^1=Np/\alpha$ we stop here from the right hand side of the interval of $q$'s. Otherwise we iterate the procedure. We take
\[
q_i:=\max\left\{r(q_{i-1}),\frac{N(2p-1)}{N+\alpha}\right\}=\max\left\{r(r(q_{i-2})),\frac{N(2p-1)}{N+\alpha}\right\}
\]
and
\[
q^i:=\min\left\{ (r(q^{i-1}))^*_{2s}, \frac{Np}{\alpha}\right\}=\min\left\{ (r((r(q^{i-2}))^*_{2s}))^*_{2s}, \frac{Np}{\alpha}\right\}.
\]
We have that 
\[
q_{i+1}<q_{i}<\ldots<q_0<\ldots<q^i<q^{i+1}.
\]
Indeed, by induction, if we assume that $q_i < q_{i-1}$ then
\[
\frac{1}{q_i}=\frac{1}{r(q_{i-1})}<\frac{1}{r(q_{i})}=\frac{1}{q_{i+1}}
\]
and, analogously, if $q^{i-1}<q^{i}$ then
\[
\frac{1}{q^{i+1}}=\frac{1}{r(q^{i})}-\frac{2s}{N}<\frac{1}{r(q^{i-1})}-\frac{2s}{N}=\frac{1}{q^{i}}.
\]
We can conclude this procedure after a finite number of steps; indeed,
\[
\frac{1}{q_i}=(2p-1)^i\left(\frac{1}{q_0}-\frac{\alpha}{2N(p-1)}\right) + \frac{\alpha}{2N(p-1)}
\quad \text{ with } \quad \frac{1}{q_0}-\frac{\alpha}{2N(p-1)}>0,
\]
and 
\[
\frac{1}{q^{i}}=(2p-1)^i\left(\frac{1}{q_0}-\frac{\alpha+2s}{2N(p-1)}\right) + \frac{\alpha+2s}{2N(p-1)}
\quad \text{ with } \quad \frac{1}{q_0}-\frac{\alpha}{2N(p-1)}<0.
\]
\end{proof}
\begin{lemma}
\label{leWr}
For every $r>1$, the solution $u$ of \eqref{equomega} is in $\mathcal{W}^{2s,r}$. 
\end{lemma}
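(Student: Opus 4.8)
The plan is to continue the bootstrap of Lemma~\ref{LqWr} past the exponent $\bar r:=N/(\alpha(1-1/p))$ at which it stalls, exploiting two facts not used there. The first is that, under \eqref{p}, the $L^q$--integrability already furnished by Lemma~\ref{LqWr} reaches strictly past the exponent $Np/\alpha$ at which the Hardy--Littlewood--Sobolev estimate for $\mathcal K_\alpha*|u|^p$ degenerates. Indeed $\tfrac{\alpha}{Np}>\tfrac{\alpha}{N}(1-\tfrac1p)-\tfrac{2s}{N}$ is equivalent to $p(\alpha-2s)<2\alpha$, which follows from $p<\tfrac{N+\alpha}{N-2s}$ (trivial if $\alpha\le 2s$; for $\alpha>2s$ it reduces to $(N+\alpha)(\alpha-2s)<2\alpha(N-2s)$, i.e.\ $(N-\alpha)(\alpha+2s)>0$). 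Since the admissible range in Lemma~\ref{LqWr} is an interval $[1,q_{\max})$, this gives $u\in L^q(\R^N)$ for every $q$ in an open neighbourhood of $Np/\alpha$; in particular $u\in L^1(\R^N)$, and $|u|^p\in L^a(\R^N)$ for every $a$ in an open neighbourhood of $N/\alpha$.

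Next I would obtain a bound on the convolution term. Split $\mathcal K_\alpha=\mathcal K_\alpha\,\chi_{\{|x|<1\}}+\mathcal K_\alpha\,\chi_{\{|x|\ge 1\}}$: the first summand lies in $L^\rho(\R^N)$ for $\rho\in[1,\tfrac{N}{N-\alpha})$, the second in $L^\rho(\R^N)$ for $\rho\in(\tfrac{N}{N-\alpha},\infty]$. Convolving each with $|u|^p$ (taken in $L^a$ with $a>N/\alpha$, resp.\ $a<N/\alpha$, as the previous step allows) and using Young's inequality gives $\mathcal K_\alpha*|u|^p\in L^\infty(\R^N)$; adding Hardy--Littlewood--Sobolev, $V:=\mathcal K_\alpha*|u|^p\in L^b(\R^N)$ for every $b\in(\tfrac{N}{N-\alpha},\infty]$. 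Now $u$ solves $(-\Delta)^su+\omega u=V\,|u|^{p-2}u$, and this nonlinearity is pointwise dominated by $\|V\|_\infty\,|u|^{p-1}$, so the growth exponent governing the bootstrap has dropped from the degenerate $2p-1$ to $p-1$. Since the Bessel operator of order $2s$ maps $L^r(\R^N)$ into $\mathcal W^{2s,r}$ for $r>1$ (see \cite{Stein} and \eqref{defW}), and $\mathcal W^{2s,r}\hookrightarrow L^{r^{*}_{2s}}(\R^N)$ by Theorem~\ref{richiamisob}(\ref{contemb}), iterating $\tfrac1{q_{n+1}}=\tfrac{p-1}{q_n}-\tfrac{2s}{N}$ from some $q_0<q_{\max}$ close to $q_{\max}$ produces a strictly decreasing sequence $\{1/q_n\}$ (automatic for $p\le 2$; for $p>2$ one uses $p(\alpha-2s)<2\alpha$ once more to place $q_0$ above the repelling fixed point $N(p-2)/(2s)$). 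Hence after finitely many steps $\tfrac{p-1}{q_n}\le\tfrac{2s}{N}$, at which point Theorem~\ref{richiamisob}, together with $u\in L^1(\R^N)$, forces $u\in L^q(\R^N)$ for \emph{every} $q\in[1,\infty)$ (when $q_{\max}=+\infty$ this step is vacuous).

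Finally, with $u\in L^q(\R^N)$ for all $q$ and $V\in L^b(\R^N)$ for all $b\in(\tfrac{N}{N-\alpha},\infty]$, H\"older's inequality gives $V\,|u|^{p-2}u\in L^r(\R^N)$ whenever $\tfrac1r=\tfrac1b+\tfrac{p-1}{q}$ for some admissible pair $b,q$; as $(b,q)$ vary, $1/r$ sweeps out the interval $(0,\,p-\tfrac\alpha N)$, and $p>1+\tfrac\alpha N$ makes $p-\tfrac\alpha N>1$, so this interval contains $(0,1)$. Therefore $V\,|u|^{p-2}u\in L^r(\R^N)$ for every $r>1$, and a last application of the mapping property of the Bessel operator gives $u\in\mathcal W^{2s,r}$ for every $r>1$. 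The main obstacle is exactly the first step: the iteration proving Lemma~\ref{LqWr} is built to asymptote to $\bar r$ and can never reach it, so the whole gain rests on the a priori observation that the window \eqref{p} already pushes $u$ into $L^q$ beyond $q=Np/\alpha$ — which is precisely what makes $\mathcal K_\alpha*|u|^p$ bounded and reignites the bootstrap with the milder exponent $p-1$ in place of $2p-1$.
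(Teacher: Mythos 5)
Your proposal is correct and follows essentially the same route as the paper's proof: you use exactly the paper's key observation that, because $p<\frac{N+\alpha}{N-2s}$, the integrability furnished by Lemma~\ref{LqWr} reaches past $Np/\alpha$, hence $\mathcal K_\alpha*|u|^p\in L^\infty(\R^N)$, and then you rerun the Bessel--Sobolev bootstrap with the milder exponent $p-1$, treating $p>2$ via the same inequality that controls the paper's iterates $1/r_i<\frac{2s(p-1)}{N(p-2)}$. The only differences are cosmetic: you iterate on the Lebesgue exponents of $u$ instead of the $\mathcal W^{2s,r}$ exponents, justify the $L^\infty$ bound by the kernel splitting (which the paper only asserts here but proves in Lemma~\ref{le:regolare}), and cover small $r$ at the end by H\"older with $V\in L^b$ rather than by recalling the initially established range $r\in(1,r_0)$.
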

\begin{proof}
Let $r_0$ be such that $1/r_0 = \alpha(1-1/p)/N$. By Lemma \ref{LqWr} we have that $u\in \mathcal{W}^{2s,r}$ for every $r\in(1,r_0)$. Then by Sobolev embeddings, $u\in L^{q}(\mathbb{R}^N)$ for every $q\in[1,(r_0)^*_{2s})$. 
Hence, since $p<(N+\alpha)/(N-2s)$, then
\[
\frac{1}{(r_0)^*_{2s}}
=
\frac{\alpha}{N}\left(1-\frac{1}{p}\right) - \frac{2s}{N}
<
\frac{\alpha}{N}\left(1-\frac{N-2s}{N+\alpha}\right) - \frac{2s}{N}
<
\frac{\alpha}{N}\frac{N-2s}{N+\alpha}
<
\frac{\alpha}{Np}.
\]                                           
Thus by $\mathcal{K}_\alpha * |u|^p\in L^\infty(\mathbb{R}^N)$ and so $(\mathcal{K}_\alpha * |u|^p)|u|^{p-2}u \in L^r (\mathbb{R}^N)$ for every $r\in(\max\{1/(p-1),1\},(r_0)^*_{2s}/(p-1))$. Thus $u\in\mathcal{W}^{2s,r}$ for every $r\in(\max\{1/(p-1),1\},(r_0)^*_{2s}/(p-1))$ and so for every $r\in(1,(r_0)^*_{2s}/(p-1))$. If $r_0\geq N/(2s)$ we conclude. Otherwise we take $r_1:=(r_0)^*_{2s}/(p-1)$ and we iterate the procedure. If $p< 2$, then $r_1 > r_0$ and the procedure stops in a finite number of steps since 
\begin{equation*}                      
\frac{1}{r_i}                                
=
\frac{p-1}{(r_{i-1})^*_{2s}}                
=\frac{(p-1)^i}{r_0} - \frac{2s(p-1)}{N}\sum_{j=0}^{i-1}(p-1)^j
=
\frac{(p-1)^i}{r_0}
- \frac{2s(p-1)(1-(p-1)^i)}{N(2-p)}.
\end{equation*}
If $p=2$, then $r_1 > r_0$ and the procedure stops in a finite number of steps since 
\[                     
\frac{1}{r_i}                                
=
\frac{1}{(r_{i-1})^*_{2s}}                
=\frac{1}{r_0} - \frac{2si}{N}.
\]
If $p>2$, then, since
\[
\frac{1}{r_i} < \frac{2s(p-1)}{N(p-2)},
\]
we have that
\[
\frac{1}{r_{i+1}}=(p-1)\left(\frac{1}{r_i}-\frac{2s}{N}\right)
=\frac{1}{r_i} + \frac{p-2}{r_i}-\frac{2s(p-1)}{N}
<\frac{1}{r_i}
\]
and the procedure stops in a finite number of steps since 
\[
\frac{1}{r_i}=(p-1)^i\left(\frac{1}{r_0}-\frac{2s(p-1)}{N(p-2)}\right)
+ \frac{2s(p-1)}{N(p-2)}.
\]
\end{proof}

\begin{proof}[Proof of Theorem \ref{thregularity}]
The conclusions follow from Lemma \ref{LqWr} and combining Lemma \ref{leWr} and (\ref{Memb}) of Theorem \ref{richiamisob}.
\end{proof}

\noindent The proof of the regularity in Theorem \ref{EXMinimo-Intro} is thereby completed. \\
\noindent We note also the following result on the summability property of the fixed sign solutions
which we will need in studying the Morse index. In this context we need the functional to be
$C^{2}$, and this is achieved for $p\geq2$. 

\begin{proposition}\label{le:dif}
Let $s> 1/2$ and $p\geq 2$. If $u\in H^{s}(\mathbb R^{N})$ is a solution of \eqref{equomega} with $|u|>0$, then $u\in H^{2s+1}(\mathbb R^{N}).$
In particular $\nabla u\in H^{s}(\mathbb R^{N})$.
\end{proposition}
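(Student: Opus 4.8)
The plan is to view the right-hand side of \eqref{equomega} as a datum: with $g:=\big(\mathcal K_\alpha*|u|^p\big)|u|^{p-2}u$, so that $(-\Delta)^su+\omega u=g$, I would show that $g\in H^1(\mathbb R^N)$ and then conclude by Fourier analysis. Indeed, since $\xi\mapsto(1+|\xi|^2)^{s}(|\xi|^{2s}+\omega)^{-1}$ is bounded on $\mathbb R^N$, the identity $\widehat u=(|\xi|^{2s}+\omega)^{-1}\widehat g$ gives
\[
(1+|\xi|^2)^{\frac{2s+1}{2}}\widehat u=(1+|\xi|^2)^{s}(|\xi|^{2s}+\omega)^{-1}\,(1+|\xi|^2)^{\frac12}\widehat g\in L^2(\mathbb R^N),
\]
that is $u\in H^{2s+1}(\mathbb R^N)$; and then $\nabla u\in H^{s}(\mathbb R^N)$ is automatic, because $H^{2s+1}(\mathbb R^N)\hookrightarrow H^{s+1}(\mathbb R^N)$ (as $2s+1\ge s+1$) and belonging to $H^{s+1}(\mathbb R^N)$ is the same as having both $u$ and $\nabla u$ in $H^{s}(\mathbb R^N)$.

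Before differentiating, I would collect what is already known. By Theorem~\ref{thregularity} (the case $s>1/2$), $u\in L^1(\mathbb R^N)\cap C^{1,\mu}(\mathbb R^N)\subset L^1(\mathbb R^N)\cap L^\infty(\mathbb R^N)$, whence $u\in L^q(\mathbb R^N)$ for every $q\in[1,\infty]$; by Lemma~\ref{leWr} with $r=2$ we have $u\in\mathcal W^{2s,2}(\mathbb R^N)=H^{2s}(\mathbb R^N)$, so (as $2s>1$) $\nabla u\in L^2(\mathbb R^N)$ and $g=(-\Delta)^su+\omega u\in L^2(\mathbb R^N)$; and Lemma~\ref{leWr} also gives $\mathcal K_\alpha*|u|^p\in L^\infty(\mathbb R^N)$. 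Since $p\ge 2$, the maps $t\mapsto|t|^p$ and $t\mapsto|t|^{p-2}t$ are of class $C^1$ with derivative bounded on the range of $u$, so the chain rule holds and yields $\nabla(|u|^p)=p\,|u|^{p-2}u\,\nabla u\in L^1(\mathbb R^N)\cap L^2(\mathbb R^N)$ (a product of $|u|^{p-1}\in L^2(\mathbb R^N)\cap L^\infty(\mathbb R^N)$ with $\nabla u\in L^2(\mathbb R^N)$) and $\nabla(|u|^{p-2}u)=(p-1)\,|u|^{p-2}\nabla u\in L^2(\mathbb R^N)$ (a product of $|u|^{p-2}\in L^\infty(\mathbb R^N)$ with $\nabla u\in L^2(\mathbb R^N)$). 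In particular $|u|^p\in W^{1,1}(\mathbb R^N)$, which legitimizes differentiation under the convolution sign; hence, by the product rule,
\[
\nabla g=p\,\big(\mathcal K_\alpha*(|u|^{p-2}u\,\nabla u)\big)\,|u|^{p-2}u+(p-1)\,\big(\mathcal K_\alpha*|u|^p\big)\,|u|^{p-2}\nabla u .
\]

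It then remains to show that both terms on the right lie in $L^2(\mathbb R^N)$. The second one is immediate from $\mathcal K_\alpha*|u|^p\in L^\infty(\mathbb R^N)$, $|u|^{p-2}\in L^\infty(\mathbb R^N)$ and $\nabla u\in L^2(\mathbb R^N)$. For the first one I would bound it pointwise by $p\,\big(\mathcal K_\alpha*(|u|^{p-1}|\nabla u|)\big)\,|u|^{p-1}$ and split $\mathcal K_\alpha=K_1+K_2$, where $K_1$ is the restriction of $\mathcal K_\alpha$ to the unit ball $B_1$ and $K_2=\mathcal K_\alpha-K_1$; then $K_1\in L^1(\mathbb R^N)$ (because $\alpha>0$) and $K_2\in L^\infty(\mathbb R^N)$ (because $\alpha<N$). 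Since $|u|^{p-1}|\nabla u|\in L^1(\mathbb R^N)\cap L^2(\mathbb R^N)$, Young's inequality gives $K_1*(|u|^{p-1}|\nabla u|)\in L^2(\mathbb R^N)$ and $K_2*(|u|^{p-1}|\nabla u|)\in L^\infty(\mathbb R^N)$, so multiplying by $|u|^{p-1}\in L^2(\mathbb R^N)\cap L^\infty(\mathbb R^N)$ puts both pieces in $L^2(\mathbb R^N)$. I expect this last estimate to be the only genuinely delicate point: one cannot, in general, hope that $\mathcal K_\alpha*(|u|^{p-1}|\nabla u|)$ alone be in $L^2(\mathbb R^N)$ when $\alpha$ is close to $N$, and the scheme works precisely because the decaying factor $|u|^{p-1}$ absorbs the slowly decaying tail $K_2$ of $\mathcal K_\alpha$, so that no restriction on $\alpha$ beyond $\alpha\in(0,N)$ is needed. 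Together with $g\in L^2(\mathbb R^N)$, this gives $g\in H^1(\mathbb R^N)$, and the proof is completed as in the first paragraph.
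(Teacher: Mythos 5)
Your proposal is correct and follows essentially the same route as the paper: use the regularity already established (Theorem~\ref{thregularity}, Lemma~\ref{leWr}) to differentiate the right-hand side $(\mathcal K_\alpha*|u|^p)|u|^{p-2}u$ via the product/chain rules with a near/far splitting of $\mathcal K_\alpha$ into an $L^1$ and an $L^\infty$ piece, conclude that its gradient lies in $L^2$, and then invert $(-\Delta)^s+\omega$ through a bounded Fourier multiplier to get $u\in H^{2s+1}$. The only differences (indicator versus smooth cut-off in the splitting, and landing the convolution term in $L^2$ directly rather than first showing $\partial_i(\mathcal K_\alpha*|u|^p)\in L^\infty$ as the paper does) are cosmetic.
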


\noindent
Before to proceed with the proof, we show the following general fact.
\begin{lemma}\label{le:regolare}
Let $u$ be a function in $L^{1}(\mathbb R^{N})\cap L^{\infty}(\mathbb R^{N})$. Then 
$\mathcal K_{\alpha}*|u|^{p}\in C_{0}(\mathbb R^{N}).$
\end{lemma}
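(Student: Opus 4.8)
The statement is that if $u\in L^1(\mathbb R^N)\cap L^\infty(\mathbb R^N)$, then the Riesz potential $\mathcal K_\alpha*|u|^p$ is continuous and vanishes at infinity. The plan is to first observe that $|u|^p\in L^1(\mathbb R^N)\cap L^\infty(\mathbb R^N)$ as well, since $\||u|^p\|_1\le \|u\|_\infty^{p-1}\|u\|_1$ and $\||u|^p\|_\infty=\|u\|_\infty^p$; so it suffices to prove the claim for a generic $f\in L^1\cap L^\infty$ in place of $|u|^p$. The key is to split the kernel $\mathcal K_\alpha(x)=|x|^{\alpha-N}$ at a fixed radius $R>0$, writing $\mathcal K_\alpha=\mathcal K_\alpha\mathbf 1_{B_R}+\mathcal K_\alpha\mathbf 1_{B_R^c}=:K_1+K_2$. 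Since $\alpha\in(0,N)$ we have $K_1\in L^1(\mathbb R^N)$ (the singularity at the origin is integrable), while $K_2\in L^\infty(\mathbb R^N)$ with $\|K_2\|_\infty=R^{\alpha-N}$, and moreover $K_2\to 0$ pointwise — more precisely $K_2(x)\le R^{\alpha-N}$ everywhere and $K_2$ is itself a bounded function vanishing at infinity.

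\emph{Continuity.} Write $\mathcal K_\alpha*f=K_1*f+K_2*f$. For the first term, $K_1\in L^1$ and $f\in L^\infty$, so $K_1*f$ is bounded and uniformly continuous by the standard continuity-of-translation argument: $\|(K_1*f)(\cdot+h)-(K_1*f)(\cdot)\|_\infty\le \|K_1(\cdot+h)-K_1\|_{L^1}\|f\|_\infty\to 0$ as $h\to 0$, using continuity of translations in $L^1$. For the second term, $K_2\in L^\infty$ (indeed $K_2\in C_0$) and $f\in L^1$, so by the same argument with the roles of the factors exchanged, $K_2*f$ is bounded and uniformly continuous: $\|(K_2*f)(\cdot+h)-(K_2*f)(\cdot)\|_\infty\le \|K_2(\cdot+h)-K_2\|_{L^1(\mu)}$... — here one must be slightly careful since $K_2\notin L^1$; instead use that $K_2$ is uniformly continuous and vanishes at infinity, so for $h$ small $|K_2(x+h)-K_2(x)|$ is small uniformly in $x$, and $\int |K_2(x-y+h)-K_2(x-y)|\,|f(y)|\,dy\le \|K_2(\cdot+h)-K_2\|_\infty\|f\|_1$. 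Hence $\mathcal K_\alpha*f$ is (uniformly) continuous on $\mathbb R^N$.

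\emph{Decay at infinity.} Fix $\eps>0$. Choose $R$ large so that $\|K_1\|_{L^1(B_R)}<\eps$ (possible since $\alpha<N$ makes $|x|^{\alpha-N}$ integrable near $0$, and in fact we want $R$ \emph{small} for this — let me instead fix $R$ small so $\|K_1\|_{L^1}\le C R^\alpha<\eps/(2\|f\|_\infty)$). Then $|(K_1*f)(x)|\le \|K_1\|_{L^1}\|f\|_\infty<\eps/2$ for all $x$. With this $R$ now fixed, consider $K_2*f$: split $f=f\mathbf 1_{B_\rho}+f\mathbf 1_{B_\rho^c}$ with $\rho$ chosen so $\|f\|_{L^1(B_\rho^c)}<\eps R^{N-\alpha}/4$, giving $|(K_2*(f\mathbf 1_{B_\rho^c}))(x)|\le R^{\alpha-N}\|f\|_{L^1(B_\rho^c)}<\eps/4$. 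For the remaining piece $K_2*(f\mathbf 1_{B_\rho})$, if $|x|>2\rho+R$ then for $y\in B_\rho$ we have $|x-y|>R+\rho$, hence $|x-y|>\rho$ and more usefully $|x-y|\to\infty$ as $|x|\to\infty$; so $|(K_2*(f\mathbf 1_{B_\rho}))(x)|\le \sup_{y\in B_\rho}|x-y|^{\alpha-N}\,\|f\|_1\to 0$ as $|x|\to\infty$, which is $<\eps/4$ for $|x|$ large. Combining, $|(\mathcal K_\alpha*f)(x)|<\eps$ for $|x|$ large, so $\mathcal K_\alpha*f\in C_0(\mathbb R^N)$.

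\emph{Main obstacle.} The only real subtlety is that $\mathcal K_\alpha\notin L^1(\mathbb R^N)$ and $\notin L^\infty(\mathbb R^N)$, so neither Young's inequality nor the continuity-of-translations lemma applies to the whole kernel at once; the splitting $\mathcal K_\alpha=K_1+K_2$ into an $L^1$ part (near the origin, using $\alpha<N$) and a bounded $C_0$ part (away from the origin) is exactly what repairs this, and then one handles $K_1*f$ using $f\in L^\infty$ and $K_2*f$ using $f\in L^1$. Everything else is a bookkeeping of $\eps$'s.
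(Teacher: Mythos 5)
Your overall strategy is sound and in the same spirit as the paper's: reduce to $f:=|u|^p\in L^1\cap L^\infty$ and split $\mathcal K_\alpha$ into an integrable piece near the origin and a bounded piece at infinity, pairing each with the complementary integrability of $f$. The decay-at-infinity argument is correct. However, one step in your continuity argument fails as written: you invoke the uniform continuity of $K_2=\mathcal K_\alpha\mathbf 1_{B_R^c}$ to get $\|K_2(\cdot+h)-K_2\|_\infty\to 0$, but $K_2$ is not even continuous — it jumps from $0$ to $R^{\alpha-N}$ across the sphere $\{|x|=R\}$, so $\|K_2(\cdot+h)-K_2\|_\infty\geq c>0$ for all small $h\neq 0$. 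The repair is the standard symmetric version of the estimate you already used for $K_1*f$: translate the $L^1$ factor $f$ rather than the kernel, i.e.
\[
|(K_2*f)(x+h)-(K_2*f)(x)|\leq \|K_2\|_\infty\,\|f(\cdot+h)-f\|_{L^1}\longrightarrow 0,
\]
by continuity of translations in $L^1$. (Alternatively, replace the sharp cut-off by a smooth one.) With that single change your proof is complete.

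For comparison, the paper takes a slightly slicker route with the same decomposition: since $\mathbf 1_{B_1}\mathcal K_\alpha\in L^r$ for all $r\in[1,N/(N-\alpha))$ and $\mathbf 1_{B_1^c}\mathcal K_\alpha\in L^r$ for all $r\in(N/(N-\alpha),\infty]$, while $|u|^p\in L^q$ for every $q\in[1,\infty]$, one can pair each piece of the kernel with $|u|^p$ in conjugate Lebesgue spaces $L^{1+\varepsilon}$ and $L^{1+1/\varepsilon}$ and invoke the classical fact that the convolution of two functions in conjugate (finite-exponent) Lebesgue spaces lies in $C_0(\mathbb R^N)$. That packaged fact replaces your hand-made continuity and decay estimates; your argument, once the translation estimate is fixed, is simply a self-contained proof of the same ingredient.
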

\begin{proof}
Let $B_{1}\subset \mathbb R^{N}$ be the unit ball centered in $0$ and write
$\mathcal K_{\alpha}=\mathbf 1_{B_1}\mathcal K_{\alpha}+\mathbf 1_{B_{1}^c}\mathcal K_{\alpha}$,
with 
\begin{align*}
\mathbf 1_{B_{1}}\mathcal K_{\alpha}\in L^{r}(\mathbb R^{N})
&
\quad\text{for every } r\in [1, N/(N-\alpha)) \\
\mathbf 1_{ B_{1}^{c}}\mathcal K_{\alpha}\in L^{r}(\mathbb R^{N})
&
\quad\text{for every }  r \in (N/(N-\alpha),+\infty].
\end{align*}
Since  $u\in L^{1}(\mathbb R^{N})\cap L^{\infty}(\mathbb R^{N})$, 
 it is possible to  choose a small positive $\varepsilon$ in such a way that
$\mathbf 1_{B_{1}}\mathcal K_{\alpha}\in L^{1+\varepsilon}(\mathbb R^{N})$ and 
$|u|^{p}\in L^{1+1/\varepsilon}(\mathbb R^{N})$ and we conclude that
\begin{eqnarray}\label{dentro}
(\mathbf 1_{B_{1}}\mathcal K_{\alpha}) * |u|^{p}\in C_{0}(\mathbb R^{N}).
\end{eqnarray}
Here $C_{0}(\mathbb R^{N})$ the space of continuous functions vanishing at infinity.
Analogously, we can choose a small positive $\varepsilon$ such that 
$|u|^{p}\in L^{1+\varepsilon}(\mathbb R^{N})$ and  
$\mathbf 1_{ B_{1}^{c}}\mathcal K_{\alpha}\in L^{1+1/\varepsilon}(\mathbb R^{N})$ and we have
\begin{eqnarray}\label{fuori}
(\mathbf 1_{B_{1}^{c}}\mathcal K_{\alpha} )* |u|^{p}\in C_{0}(\mathbb R^{N}).
\end{eqnarray}
By \eqref{dentro} and \eqref{fuori} we conclude.
\end{proof}

 \begin{proof}[Proof of Proposition \ref{le:dif}]
Let us assume $u>0$. By  Theorem \ref{thregularity} and   Lemma \ref{leWr},
it is  $u\in L^{1}(\mathbb R^{N})\cap C^{1,\mu}(\mathbb R^{N})\cap H^{1}(\mathbb R^{N})$.
We will show that $\|(-\Delta)^{s+1/2}u\|_{2}<\infty.$
By Lemma \ref{le:regolare} we know that $\mathcal K_{\alpha}*u^{p}\in C_{0}(\mathbb R^{N})$.
We  observe now that $\mathcal K_{\alpha}*u^{p}\in C^{1}(\mathbb R^{N})$.
Indeed,
consider $\eta\in C^{\infty}_{c}(\mathbb R^{N})$ with ${\rm supp}(\eta)\subset B_{1}(0)$
and $\eta\equiv1$ on $B_{1/2}(0).$
Then 
\begin{itemize}
\item $\eta\mathcal K_{\alpha}\in L^{1}(\mathbb R^{N}), u^{p}\in C^{1} (\mathbb R^{N})$ with bounded first order derivatives;
\item $(1-\eta)\mathcal K_{\alpha} \in C^{\infty}(\mathbb R^{N})$ with bounded derivatives, $u^{p}\in L^{1}(\mathbb R^{N}).$
\end{itemize}
Hence, by the usual properties of the convolution,
$\mathcal K_{\alpha}*u^{p}$ is $C^{1}$ with derivatives
given by 
\[
\partial_{i}(\mathcal K_{\alpha}*u^{p})
=
\eta\mathcal K_{\alpha}*\partial_{i}u^{p}
+((1-\eta)\mathcal K_{\alpha})*\partial_{i}u^{p}.
\]
Now,  since $u\in C^{1,\mu}(\mathbb R^{N})$ and $p\geq 2$, we have 
\begin{align*}
\eta\mathcal K_{\alpha}*\partial_{i}u^{p}
&=
\eta\mathcal K_{\alpha}*(p u^{p-1}\partial_{i}u)
\in L^{1}(\mathbb R^{N})*L^{\infty}(\mathbb R^{N}) 
\subset L^{\infty}(\mathbb R^{N}),\\
((1-\eta)\mathcal K_{\alpha})*\partial_{i}u^{p}
&=
((1-\eta)\mathcal K_{\alpha})*(p u^{p-1}\partial_{i}u)
\in L^{\infty}(\mathbb R^{N})*L^{1}(\mathbb R^{N}) 
\subset L^{\infty}(\mathbb R^{N}),
\end{align*}
which prove  that $\partial_{i}(\mathcal K_{\alpha}*u^{p})\in L^{\infty}(\mathbb R^{N})$.
Set $v:=(\mathcal K_{\alpha}*u^{p})u^{p-1}$; since $p\geq 2$ we have
\begin{equation}\label{quasifine}
\partial_{i} v = u^{p-1}\partial_{i}(\mathcal K_{\alpha}*u^{p})+(\mathcal K_{\alpha}*u^{p})\partial_{i} u^{p-1} \in L^{2}(\mathbb R^{N})\cap L^{\infty}(\mathbb R^{N})
\end{equation}
and then 
\begin{equation*} 
\|(-\Delta)^{s+1/2}u\|_{2}=\|(-\Delta)^{s} [(-\Delta)^{s}+\omega I]^{-1}(-\Delta)^{1/2}v\|_{2}\leq C \|\nabla v\|_{2}<\infty.
\end{equation*}
The proof  is thereby complete.
%
\end{proof}

\begin{remark}\rm\label{Questa}
Under the hypotheses of Proposition \ref{le:dif},
we have $u\in C^{2}(\mathbb R^{N})$. Indeed by Theorem \ref{thregularity} and 
\eqref{quasifine} we know that 
$u\in C^{1,\mu}(\mathbb{R}^N)$
with $ \partial_i (-\omega u+(\mathcal{K}_\alpha * u^p)u^{p-1})\in L^{\infty}(\mathbb R^{N})$.
Thus $\partial_i u$ satisfies
\[
\Ds \partial_i u = \partial_i(-\omega u +(\mathcal{K}_\alpha * u^p)u^{p-1})
\]
and, by \cite[Proposition 2.1.11]{Silvestre}, we conclude  that $\partial_{i}u\in C^{1}(\mathbb R^{N}).$
\end{remark}

\noindent We conclude this section by showing the asymptotic profile of the solutions of \eqref{equomega}.
For the sake of simplicity we set
\begin{equation*} 
V:=-(\mathcal K_{\alpha}*|u|^{p})|u|^{p-2}.
\end{equation*}
\noindent We get the following
\begin{theorem}
\label{dec-thm}
Let $p\geq2$ and $u$ be a solution of \eqref{equomega}.
 Then there exist two positive constants $C_{1},C_{2}$ such that, for any $x\in \mathbb R^{N},$
\begin{equation*} 
|u(x)|\leq C_1 \left\langle x\right\rangle^{-N-2s}\,, \ \ \text{ where } \left\langle x\right\rangle=(1+|x|^{2})^{1/2}
\end{equation*}
and 
\begin{equation*} 
u(x)=-C_{2} \Big(\int V u\Big) \frac{1}{|x|^{N+2s}}+o( |x|^{-N-2s}) \ \ \ \text{ for }\ |x|\to+\infty.
\end{equation*}
\end{theorem}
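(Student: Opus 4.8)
The strategy is to recast \eqref{equomega} as a convolution identity with the resolvent kernel of $(-\Delta)^s+\omega$ and then bootstrap on the decay. Put $g:=\X|u|^{p-2}u=-Vu$, so that $u$ solves $(-\Delta)^su+\omega u=g$. Since $p\ge2$ and, by Theorem~\ref{thregularity}, $u\in L^1(\R^N)\cap L^\infty(\R^N)\cap C_0(\R^N)$, Lemma~\ref{le:regolare} gives $\mathcal K_\alpha*|u|^p\in C_0(\R^N)$, hence $V\in C_0(\R^N)$ (with $V\le0$) and $g\in L^1(\R^N)\cap L^\infty(\R^N)$. Let $G$ be the fundamental solution of $(-\Delta)^s+\omega$, $\widehat G(\xi)=(|\xi|^{2s}+\omega)^{-1}$; taking Fourier transforms in $(-\Delta)^su+\omega u=g$ gives $u=G*g$, an identity holding pointwise since both sides are continuous. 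I would rely on the following standard facts about $G$: it is positive, $G\in L^1(\R^N)$ with $\int_{\R^N}G=\widehat G(0)=1/\omega$, one has $G(x)\le C|x|^{-N-2s}$ for $|x|\ge1$, and, crucially, $\lim_{|x|\to\infty}|x|^{N+2s}G(x)=:k_\infty\in(0,\infty)$.

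For the pointwise bound, set $\mu(\rho):=\sup_{|x|\ge\rho}|u(x)|$, which is finite, nonincreasing and vanishes at $+\infty$, and fix $R_0\ge1$ so large that $|V(x)|\le\eps_0:=\omega\,2^{-(N+2s)-1}$ for $|x|\ge R_0$. For $|x|=\rho\ge2R_0$ I would split $G*g(x)=\int G(x-y)g(y)\,dy$ over $\{|y|\le R_0\}$, $\{R_0<|y|\le\rho/2\}$ and $\{|y|>\rho/2\}$: on the first two regions $|x-y|\ge\rho/2$, so $G(x-y)\le C\rho^{-N-2s}$ and those contributions are $\le C\rho^{-N-2s}(\|g\|_1+\eps_0\|u\|_1)$; on the last, $|g(y)|\le\eps_0|u(y)|\le\eps_0\mu(\rho/2)$ and $\int_{\R^N}G=1/\omega$ give a contribution $\le(\eps_0/\omega)\mu(\rho/2)$. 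Taking the supremum over $|x|\ge\rho$ yields $\mu(\rho)\le CA\,\rho^{-N-2s}+q\,\mu(\rho/2)$ with $A:=\|g\|_1+\eps_0\|u\|_1$ and $q:=\eps_0/\omega$, so that $q\,2^{N+2s}\le1/2$; iterating this about $\log_2(\rho/R_0)$ times and using $\mu\le\|u\|_\infty$ produces $\mu(\rho)\le C\rho^{-N-2s}$ for large $\rho$, hence $|u(x)|\le C_1\langle x\rangle^{-N-2s}$ on all of $\R^N$.

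For the sharp asymptotics I would write $|x|^{N+2s}u(x)=\int_{\R^N}|x|^{N+2s}G(x-y)\,g(y)\,dy$ and again split at $|y|=|x|/2$. On $\{|y|\le|x|/2\}$, as $|x|\to\infty$ one has $|x-y|\to\infty$ with $|x|/|x-y|\to1$ (since $\big||x|-|x-y|\big|\le|y|$), hence $|x|^{N+2s}G(x-y)\to k_\infty$ for each fixed $y$, while on that region $|x|^{N+2s}G(x-y)\le C\,2^{N+2s}$; as $g\in L^1(\R^N)$, dominated convergence makes this part tend to $k_\infty\int_{\R^N}g$. On $\{|y|>|x|/2\}$ the already-established bound gives $\sup_{|y|>|x|/2}|g(y)|\le\big(\sup_{|y|>|x|/2}|V(y)|\big)\,C_1\langle|x|/2\rangle^{-N-2s}=o(|x|^{-N-2s})$, and $\int_{\R^N}G=1/\omega$ shows this part is $o(1)$. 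Thus $|x|^{N+2s}u(x)\to k_\infty\int_{\R^N}g=-k_\infty\int_{\R^N}Vu$, which is exactly the claimed expansion with $C_2=k_\infty>0$.

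The bootstrap and the two splittings are routine; the main point requiring care is the input on $G$, namely positivity, the total mass $1/\omega$, and above all the exact rate $|x|^{N+2s}G(x)\to k_\infty>0$, which is precisely what forces the merely polynomial decay, in contrast with the exponential decay of the local case $s=1$ (cf.\ \cite{MV}). If no direct reference is at hand, one obtains these facts from the subordination formula $G(x)=\int_0^\infty e^{-\omega t}p_t(x)\,dt$, where $p_t$ is the transition density of the rotationally invariant $2s$-stable process, together with the two-sided bounds $p_t(x)\asymp\min\{t^{-N/(2s)},\,t\,|x|^{-N-2s}\}$ and a dominated-convergence evaluation of $\lim_{|x|\to\infty}|x|^{N+2s}\int_0^\infty e^{-\omega t}p_t(x)\,dt$.
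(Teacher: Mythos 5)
Your argument is correct, but it follows a different route from the paper: the paper's proof of Theorem \ref{dec-thm} is essentially a two-line verification plus a citation. It rewrites \eqref{equomega} as $(-\Delta)^s u + Vu = -\omega u$, uses Lemma \ref{le:regolare} (together with $p\geq 2$ and the regularity of Theorem \ref{thregularity}) to see that $V\in L^\infty(\R^N)$ with $V(x)\to 0$ as $|x|\to\infty$, so that $V\geq -\tau$ outside a large ball for any $\tau\in(0,1)$, and then invokes \cite[Lemma C.2]{FLS}, which delivers both the uniform bound $|u(x)|\leq C_1\langle x\rangle^{-N-2s}$ and the expansion $u(x)=-C_2\big(\int Vu\big)|x|^{-N-2s}+o(|x|^{-N-2s})$. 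What you have done is, in effect, to reprove that cited lemma: your representation $u=G*g$ with $\widehat G(\xi)=(|\xi|^{2s}+\omega)^{-1}$, the bootstrap on $\mu(\rho)=\sup_{|x|\geq\rho}|u|$ with the smallness of $V$ at infinity beating the factor $2^{N+2s}$, and the dominated-convergence evaluation of $|x|^{N+2s}u(x)$ are precisely the mechanism behind Frank--Lenzmann--Silvestre's argument, and all steps check out (the sup over $|x|\geq\rho$, the choice $q\,2^{N+2s}\leq 1/2$, and the $o(1)$ tail estimate are fine). The trade-off is clear: the paper's proof is short and inherits from \cite{FLS} the dependence of $C_1,C_2$ on the solution (via its $L^2$-norm, as remarked after the proof), while yours is self-contained \emph{modulo} the kernel facts — positivity, $\int G=1/\omega$, and above all $\lim_{|x|\to\infty}|x|^{N+2s}G(x)=k_\infty\in(0,\infty)$ — which carry the real weight and are exactly the content of \cite[Lemma C.1]{FLS}; your subordination sketch via the $2s$-stable transition densities is a legitimate way to establish them, but it should either be carried out in detail or replaced by a precise reference, otherwise you have merely relocated the citation rather than removed it.
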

\begin{proof}
For a  solution $u$ of \eqref{equomega}, we have
$$
(-\Delta)^{s}u+V u= -\omega u.
$$
By   Lemma \ref{le:regolare} we have $V\in L^{\infty}(\mathbb R^{N})$
and $V(x)\to 0$ for $|x|\to \infty$. Then, for every $\tau\in(0,1)$ there exists $R>0$ such that $V(x)\geq -\tau$, whenever $|x|\geq R.$
Then, we are in a position to apply \cite[Lemma C.2]{FLS} to obtain the conclusion.
\end{proof}

\noindent As it can be seen in \cite[Lemma C.2]{FLS}, the constants $C_{1},C_{2}$ depend
on the solutions by their $L^{2}-$norm.

\noindent The decay estimate in Theorem \ref{EXMinimo-Intro} is proved.

\section{Ground states}
\label{sec:gs}
\noindent
Ground states solutions for \eqref{equomega} can be found minimizing $E_0$, defined in \eqref{defE0}, on the sphere $\Sigma_{\rho}=\{ u\in \Hs : \|u\|_{2}=\rho\}$ with $\rho>0$, or
\[
S(u):=\frac{\|u\|^{2}}
{\left(\int ({\mathcal K}_\alpha *|u|^p )|u|^{p}\right)^{1/p}}
\]
on $(\Hs\cap L^{2Np/(N+\alpha)}(\mathbb{R}^N))\setminus\{0\}$, or considering 
\[
W(u):= 
\frac{\|(-\Delta)^{s/2} u\|_2^{\frac{N(p-1)-\alpha}{sp}}(\omega \|u\|_2^2)^\frac{N+\alpha-(N-2s)p}{2sp} }{\left(\int ({\mathcal K}_\alpha *|u|^p )|u|^{p}\right)^{1/p}}.
\]
Indeed, straightforward calculations show the following relationships between
these three functionals
\begin{proposition}
\label{mappatura1}
For every $p>1$ and $u\in (\Hs\cap L^{2Np/(N+\alpha)}(\mathbb{R}^N))\setminus\{0\}$, 
\[
\max_{\tau>0} E_\omega(\tau u) =\left(\frac{1}{2}-\frac{1}{2p}\right) S(u)^{p/(p-1)}.
\]
Moreover let $u_\tau(\cdot)=u(\tau\cdot)$. We have that
\begin{enumerate}[label=(\roman*),ref=\roman*]
\item if $p$ satisfies \eqref{p} then
\[
\min_{\tau>0} S(u_\tau)
=
\frac{2sp}{N+\alpha-(N-2s)p}
\left(\frac{N+\alpha-(N-2s)p}{Np - (N+\alpha)}\right)^{\frac{Np - (N+\alpha)}{2sp}} 
W(u);
\]
\item if $p$ satisfies \eqref{p2} then
\[
\min_{\tau>0} E_0(\tau^{N/2}u_\tau)
=
- \mathfrak{a}
\left(\frac{(\omega\|u\|_2^2)^{N+\alpha-(N-2s)p}}{W(u)^{2sp}}\right)^\frac{1}{(2s+\alpha)-N(p-1)}
\]
where
\[
\mathfrak{a}
=
\frac{(\alpha+2s)-N(p-1)}{4sp} \left(\frac{N(p-1)-\alpha}{2sp}\right)^\frac{N(p-1)-\alpha}{(\alpha+2s)-N(p-1)};
\]
\item if $p=1+(2s+\alpha)/N$ then $E_0(\tau^{N/2}u_\tau)=\tau^{2s}E_0(u)$;
\item if $p>1+(2s+\alpha)/N$ then
\[
\lim_{\tau\to+\infty} E_0(\tau^{N/2}u_\tau)=-\infty
\]
and
\[
\max_{\tau>0} E_0(\tau^{N/2}u_\tau)
=
\mathfrak{b}
\left(\frac{W(u)^{2sp}}{(\omega\|u\|_2^2)^{N+\alpha-(N-2s)p}}\right)^\frac{1}{N(p-1)-(2s+\alpha)}
\]
where
\[
\mathfrak{b}
=
\frac{N(p-1)-(\alpha+2s)}{2[N(p-1)-\alpha]} \left(\frac{2sp}{N(p-1)-\alpha}\right)^\frac{2s}{N(p-1)-(\alpha+2s)}.
\]
\end{enumerate}
\end{proposition}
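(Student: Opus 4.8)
The plan is to prove Proposition~\ref{mappatura1} by a sequence of elementary one-variable optimizations, exploiting the explicit homogeneity properties of each term under the two scalings $\tau\mapsto \tau u$ and $\tau\mapsto \tau^{N/2}u_\tau$ with $u_\tau(\cdot)=u(\tau\,\cdot)$. First I would record the scaling identities: for $v=\tau u$ one has $\|(-\Delta)^{s/2}v\|_2^2=\tau^2\|(-\Delta)^{s/2}u\|_2^2$, $\|v\|_2^2=\tau^2\|u\|_2^2$, and $\int\X|v|^p=\tau^{2p}\int\X|u|^p$, so $E_\omega(\tau u)=\frac{\tau^2}{2}\|u\|^2-\frac{\tau^{2p}}{2p}\int\X|u|^p$; maximizing this concave-then-decreasing function of $\tau^2$ over $\tau>0$ (the critical point is $\tau^{2(p-1)}=\|u\|^2/\int\X|u|^p$) gives $\max_{\tau>0}E_\omega(\tau u)=\bigl(\tfrac12-\tfrac1{2p}\bigr)\bigl(\|u\|^2\bigr)^{p/(p-1)}\bigl(\int\X|u|^p\bigr)^{-1/(p-1)}$, which is exactly $\bigl(\tfrac12-\tfrac1{2p}\bigr)S(u)^{p/(p-1)}$ by the definition of $S$. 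This handles the first display.

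Next I would treat the scaling $u_\tau(\cdot)=u(\tau\cdot)$: here $\|u_\tau\|_2^2=\tau^{-N}\|u\|_2^2$, $\|(-\Delta)^{s/2}u_\tau\|_2^2=\tau^{2s-N}\|(-\Delta)^{s/2}u\|_2^2$ (since the Gagliardo seminorm scales with weight $2s$ relative to $L^2$), and $\int\X|u_\tau|^p=\tau^{-(N+\alpha)}\int\X|u|^p$ (the double convolution integral has total homogeneity degree $-(N+\alpha)$ under $x\mapsto x/\tau$). Substituting into $S(u_\tau)=\|u_\tau\|^2/(\int\X|u_\tau|^p)^{1/p}$, with $\|u_\tau\|^2=\|(-\Delta)^{s/2}u_\tau\|_2^2+\omega\|u_\tau\|_2^2$, one gets a function of the form $A\tau^{2s-N+(N+\alpha)/p}+B\tau^{-N+(N+\alpha)/p}$ with $A,B>0$; under hypothesis \eqref{p} the first exponent is positive and the second is negative, so there is a unique interior minimum. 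Computing it and collecting the powers of $A=\|(-\Delta)^{s/2}u\|_2^2(\int\X|u|^p)^{-1/p}$ and $B=\omega\|u\|_2^2(\int\X|u|^p)^{-1/p}$ yields a product of a universal constant (the two parenthetical factors in the statement) times $\|(-\Delta)^{s/2}u\|_2^{\frac{N(p-1)-\alpha}{sp}}(\omega\|u\|_2^2)^{\frac{N+\alpha-(N-2s)p}{2sp}}(\int\X|u|^p)^{-1/p}=W(u)$. This is part~(i).

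For parts~(ii)--(iv) I would use the composite scaling $w_\tau:=\tau^{N/2}u_\tau$, which preserves $\|w_\tau\|_2=\|u\|_2$, while $\|(-\Delta)^{s/2}w_\tau\|_2^2=\tau^{2s}\|(-\Delta)^{s/2}u\|_2^2$ and $\int\X|w_\tau|^p=\tau^{Np-N-\alpha}\int\X|u|^p$. Hence $E_0(w_\tau)=\frac{\tau^{2s}}{2}\|(-\Delta)^{s/2}u\|_2^2-\frac{\tau^{Np-N-\alpha}}{2p}\int\X|u|^p$, a two-term function of $\tau$. The sign of the exponent $Np-N-\alpha-2s=N(p-1)-(2s+\alpha)$ decides the behaviour: under \eqref{p2} it is negative, so the second term dominates for large $\tau$ and the expression has a unique negative minimum; at $p=1+(2s+\alpha)/N$ the two exponents coincide and $E_0(w_\tau)=\tau^{2s}E_0(u)$; for $p>1+(2s+\alpha)/N$ the second term dominates (with a minus sign) so $E_0(w_\tau)\to-\infty$ and there is a unique positive maximum. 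In each case I would differentiate in $\tau$, solve for the critical $\tau$, substitute back, and simplify; expressing the answer in terms of $W(u)$ requires only the algebraic identity $\|(-\Delta)^{s/2}u\|_2^2=\bigl(W(u)(\int\X|u|^p)^{1/p}(\omega\|u\|_2^2)^{-\frac{N+\alpha-(N-2s)p}{2sp}}\bigr)^{\frac{sp}{N(p-1)-\alpha}}$ obtained by inverting the definition of $W$, together with keeping track of the rational exponents. There is no conceptual obstacle here; the only real work is the bookkeeping of exponents, so the main ``hard part'' is purely computational — verifying that the universal constants $\mathfrak a$, $\mathfrak b$, and the two factors in part~(i) come out exactly as stated. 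I would carry out each of the four cases in turn, checking that the constraint on $p$ used in that case is precisely what makes the relevant critical point exist and have the claimed sign.
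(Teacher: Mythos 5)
Your proposal is correct and is precisely the ``straightforward calculation'' the paper alludes to without writing out: record the homogeneities of $\|(-\Delta)^{s/2}\cdot\|_2^2$, $\|\cdot\|_2^2$ and $\int\X|\cdot|^p$ under the two scalings $\tau u$ and $\tau^{N/2}u_\tau$, then optimize the resulting one-variable functions in $\tau$; carrying out the exponent bookkeeping does reproduce the first display, the constant in (i) (your form $\frac{2sp}{Np-(N+\alpha)}\bigl(\frac{Np-(N+\alpha)}{N+\alpha-(N-2s)p}\bigr)^{\frac{N+\alpha-(N-2s)p}{2sp}}$ coincides with the stated one since the two exponents sum to $2sp$), and the constants $\mathfrak{a}$, $\mathfrak{b}$. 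One small verbal slip to fix in case (ii): under \eqref{p2} it is the term $\tau^{2s}\|(-\Delta)^{s/2}u\|_2^2$ that dominates as $\tau\to+\infty$, while the negative convolution term dominates as $\tau\to 0^+$; this, not the behaviour you stated at large $\tau$, is what makes $E_0(\tau^{N/2}u_\tau)$ attain a finite, strictly negative minimum.
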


\noindent
Hence, arguing as in  
 \cite[Proof of Proposition 2.2]{MV} 
and applying Lemma \ref{lemmacc}, we obtain
\begin{theorem}\label{ILTEOREMA}
If $p$ satisfyies \eqref{p}, then $S$ achieves the minimum on $H^{s}(\mathbb R^{N})\setminus\{0\}$.
\end{theorem}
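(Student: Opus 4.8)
The plan is to run the direct method of the calculus of variations on the scale-invariant quotient $S$. First I would observe that $S$ is invariant under dilations $u\mapsto u(\tau\,\cdot)$ in the sense that the minimum of $S$ over $H^s(\R^N)\setminus\{0\}$ equals the minimum over any fixed $L^2$-sphere after optimizing in the scaling parameter; equivalently, by Proposition~\ref{mappatura1}, one may as well minimize $E_\omega$ over the Nehari manifold $\mathcal N_\omega$, and the value $\left(\frac12-\frac1{2p}\right)\inf S^{p/(p-1)}$ is attained precisely when $S$ is attained. So it suffices to take a minimizing sequence $\{u_n\}$ for $S$, which by homogeneity we may normalize so that $\int\X|u_n|^p=1$ and $\|u_n\|^2\to c:=\inf S$. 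By Lemma~\ref{Young}\eqref{itY1}, $\int\X|u_n|^p\le C\|u_n\|_{2Np/(N+\alpha)}^{2p}$, so the normalization forces $\|u_n\|_{2Np/(N+\alpha)}$ to be bounded away from zero, while $\{u_n\}$ is bounded in $H^s(\R^N)$.

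The heart of the argument is to rule out vanishing. Set $q=2Np/(N+\alpha)\in(2,2^*_s)$ by Lemma~\ref{Young}\eqref{itY1}. If $\sup_{x}\int_{B_1(x)}|u_n|^q\to 0$, then Lemma~\ref{lionslemma} gives $u_n\to 0$ in $L^q$, contradicting $\|u_n\|_q\gtrsim 1$. Hence there exist $\delta>0$ and $x_n\in\R^N$ with $\int_{B_1(x_n)}|u_n|^q\ge\delta$. Translating, $\tilde u_n:=u_n(\cdot+x_n)$ is still minimizing (both numerator and denominator of $S$ are translation-invariant), still bounded in $H^s$, and up to a subsequence $\tilde u_n\rightharpoonup u$ in $H^s(\R^N)$, $\tilde u_n\to u$ in $L^q_{\rm loc}$, and $\tilde u_n\to u$ a.e.; the local bound yields $\int_{B_1(0)}|u|^q\ge\delta$, so $u\not\equiv 0$.

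It remains to pass to the limit and identify $u$ as a minimizer. By weak lower semicontinuity of the norm, $\|u\|^2\le\liminf\|\tilde u_n\|^2=c$. For the denominator I would use the Brezis--Lieb type splitting for the Choquard term together with the fact that along the chosen subsequence no mass is lost in the relevant $L^q$-norm: combining a.e. convergence with the uniform estimate $\int\X|\tilde u_n|^p\le C\|\tilde u_n\|_q^{2p}$, one gets $\int\X|u|^p\ge\limsup\int\X|\tilde u_n|^p=1$ after the dichotomy alternative of Lions' concentration-compactness is excluded — and dichotomy is excluded exactly because $S$ is subadditive with strict inequality unless one piece vanishes, which is the standard mechanism. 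Then $S(u)=\|u\|^2/(\int\X|u|^p)^{1/p}\le c/1=c$, while $S(u)\ge c$ by definition of the infimum; hence $S(u)=c$ and $u$ is a minimizer. The main obstacle is precisely this last step: one must show the Choquard nonlinearity does not lose mass in the limit, i.e. rule out the dichotomy case in the concentration-compactness alternative. This is handled by the strict subadditivity of the map $\rho\mapsto \inf\{\|u\|^2:\int\X|u|^p=\rho\}=\rho^{1/p}c$, which is strictly concave for $p>1$ so that splitting mass between two bubbles strictly increases the energy cost; this is exactly the argument used in \cite[Proof of Proposition 2.2]{MV}, to which I would appeal for the remaining routine details.
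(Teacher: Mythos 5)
Your proposal is correct and follows essentially the same route as the paper, whose proof consists precisely of invoking \cite[Proof of Proposition 2.2]{MV} together with the Lions-type estimate of Lemma \ref{lemmacc} (Lemma \ref{lionslemma}, which you use, is its sequential form): normalize the Choquard term, exclude vanishing, translate, then use Br\'ezis--Lieb splitting and the strict subadditivity coming from the concavity of $\rho\mapsto\rho^{1/p}$ to rule out loss of mass in the nonlinear term. The only cosmetic slip is the line $\int(\mathcal K_{\alpha}*|u|^{p})|u|^{p}\ge\limsup_n\int(\mathcal K_{\alpha}*|\tilde u_n|^{p})|\tilde u_n|^{p}$, which is not what the splitting gives directly (one obtains that the two pieces sum to $1+o_n(1)$, and the subadditivity argument then forces the limit piece to carry the full mass), but since that is exactly the mechanism you describe and defer to \cite{MV} for, nothing essential is missing.
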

\noindent This proves the existence part of Theorem \ref{EXMinimo-Intro}.
\noindent Concerning the symmetry of these ground states, we have the following result.
\begin{theorem}
\label{sym-gs-thm}
Let $u\in H^s(\R^N)$ be a ground state of \eqref{equomega}. Then $u$ has fixed 
sign and there exist $x_0\in\R^N$ and a monotone function $v:\R\to\R$ 
with fixed sign such that  $u(x)=v(|x-x_0|)$.
\end{theorem}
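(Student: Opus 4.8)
The plan is to prove that ground states have fixed sign by the standard argument that replacing $u$ with $|u|$ does not increase the energy, and then to prove radial symmetry via polarization (two-point rearrangement), which is the natural tool when the nonlinearity is nonlocal. First I would show that if $u$ is a ground state then so is $|u|$: by \eqref{gagliarda} the Gagliardo seminorm satisfies $\|(-\Delta)^{s/2}|u|\|_2 \le \|(-\Delta)^{s/2}u\|_2$ (since $||u(x)|-|u(y)||\le |u(x)-u(y)|$), while the $L^2$-norm and the Choquard term $\int \X |u|^p$ are unchanged; hence $E_\omega(|u|)\le E_\omega(u)$ and $|u|$ lies in (or can be projected onto) the Nehari manifold without raising the energy, so $|u|$ is also a minimizer. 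Consequently $|u|$ is a nonnegative weak solution, and since $V:=-(\mathcal K_\alpha*|u|^p)|u|^{p-2}\in L^\infty$ by Lemma \ref{le:regolare}, the strong maximum principle for $(-\Delta)^s+\omega+V$ (together with the regularity from Theorem \ref{thregularity}) forces $|u|>0$ everywhere; therefore the original $u$ never vanishes and has constant sign, so WLOG $u>0$.

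Next I would establish radial symmetry about some point. The key tool is polarization: for a closed half-space $H\subset\R^N$ with associated reflection $\sigma_H$, the polarization $u^H$ defined by $u^H(x)=\max\{u(x),u(\sigma_H x)\}$ for $x\in H$ and $u^H(x)=\min\{u(x),u(\sigma_H x)\}$ for $x\notin H$ satisfies $\|(-\Delta)^{s/2}u^H\|_2=\|(-\Delta)^{s/2}u\|_2$ and $\|u^H\|_2=\|u\|_2$ (polarization preserves the Gagliardo seminorm, as it preserves the distribution function and cannot increase the double integral in \eqref{gagliarda}), while for the Riesz-type kernel $\mathcal K_\alpha$ one has the polarization inequality $\int \X |u|^p \le \int(\mathcal K_\alpha*|u^H|^p)|u^H|^p$, with equality if and only if either $u=u^H$ or $u=u^H\circ\sigma_H$. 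I would quote this inequality (it is the nonlocal analogue of the Riesz rearrangement inequality used by Lieb, adapted to polarizations and to the fractional seminorm — the relevant statements are available in the literature on Choquard equations, e.g. the strategy of \cite{MV}). Since $u$ minimizes $S$ (or $E_\omega$ on $\mathcal N_\omega$), the numerator of $S$ is unchanged under polarization and the denominator cannot decrease; as $u$ is a minimizer, equality must hold, so for every half-space $H$ we have $u=u^H$ or $u=u^H\circ\sigma_H$, i.e.\ $u$ is "polarization-symmetric" with respect to every $H$.

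Finally I would invoke the characterization that a nonnegative function which equals one of $u^H$, $u^H\circ\sigma_H$ for every half-space $H$ is, up to translation, radially symmetric and nonincreasing in $|x-x_0|$ — this is a purely measure-theoretic fact (see e.g.\ the work of Van Schaftingen–Willem on polarization and symmetry). Since we already know $u$ is continuous (Theorem \ref{thregularity}) and strictly positive, the radial profile $v(r)=u(x_0+r e)$ is well defined, continuous, positive and nonincreasing, giving $u(x)=v(|x-x_0|)$ with $v$ monotone as claimed. The main obstacle is the technical verification that polarization preserves the fractional Gagliardo seminorm and yields the strict form of the nonlocal Riesz inequality with its equality-case analysis; once those two ingredients are in hand, the deduction of radial symmetry from "symmetric under all polarizations" is routine. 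I would organize the write-up as: (1) $u$ has fixed sign and is nowhere zero; (2) polarization identities and inequalities; (3) equality case forces symmetry under all polarizations; (4) conclude radial monotone symmetry.
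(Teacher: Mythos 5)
Your proposal follows essentially the same route as the paper: pass to $|u|$ using $\|(-\Delta)^{s/2}|u|\|_2\le\|(-\Delta)^{s/2}u\|_2$, rule out zeros of $|u|$ (the paper does this via the pointwise argument of \cite{FQT} rather than a maximum principle, but it is the same idea), then use polarization together with the equality case of the nonlocal Riesz-type inequality and the characterization lemmas of \cite{MV} to obtain radial, monotone symmetry about some $x_0$. One correction: polarization does not in general \emph{preserve} the fractional Gagliardo seminorm (it can strictly decrease it); the correct statement, and all your argument actually needs, is $\|(-\Delta)^{s/2}u^H\|_2\le\|(-\Delta)^{s/2}u\|_2$ (Baernstein, \cite[Theorem 2]{B}, exactly as the paper uses), which combined with $S(u)\le S(u^H)$ still forces equality in the Choquard term and hence $u^H=u$ or $u^H=u\circ\sigma_H$ for every half-space $H$.
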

\begin{proof}
Given a ground state $u$ of \eqref{equomega}, $u\neq 0$ and $u$ is a solution of 
$$
S(u)=\inf_{\varphi\in H^s(\R^N)\setminus\{0\}}S(\varphi).
$$  
Taking into account   $\|(-\Delta)^{s/2}|u|\|_{2}\leq \|(-\Delta)^{s/2}u\|_{2}$
also $|u|$ is a ground state. Then
\begin{equation*} 
\Ds |u| +\omega |u|=\X |u|^{p-1}.
\end{equation*}
By arguing as in \cite[end of Section 3]{FQT}, if $u(x_0)=0$ for some $x_0\in\R^N$, then one obtains
$$
\int \frac{|u(x_0+y)|+|u(x_0-y)|}{|x_{0}-y|^{N+2s}}=0,
$$
yielding $u=0$, a contradiction.\ Whence $|u|>0$ and 
$u$ does not change sign. We shall assume $u>0$.
Given $v\in H^s(\R^N)$ with $v\geq 0$ and any half-space $H\subset \R^N$, 
the polarization $v^H$ is defined as
$$
v^H(x)=
\begin{cases}
\max\{v(x),v(\sigma_H(x))\} & \text{if $x\in H$,} \\
\min\{v(x),v(\sigma_H(x))\} & \text{if $x\in \R^N\setminus H$}, 
\end{cases}
$$
where $\sigma^{H}(x)$ is the reflected of $x$ with respect to $\partial H$. Then, $\|v^H\|_2^2=\|v\|_2^2$ and, by \eqref{gagliarda} and \cite[Theorem 2]{B}, $\|(-\Delta)^{s/2} v^H\|_2^{2}
\leq 
\|(-\Delta)^{s/2} v\|_2^{2}$.
In turn, since $S(u)\leq S(u^H)$, we conclude that 
\[
\int ({\mathcal K}_\alpha *|u|^p) |u|^{p}
=\frac{\|u\|^{2p}}{[S(u)]^p}
\geq 
\frac{\|u^H\|^{2p}}{[S(u^H)]^p}
=\int ({\mathcal K}_\alpha *|u^H|^p ) |u^H|^{p}.
\]
Then, by combining \cite[Lemma 5.3 and Lemma 5.4]{MV}, we conclude the proof.
\end{proof}

\vspace{10pt}

\noindent
As we said in the Introduction, here we are particularly interested into the precompactness properties of the minimizing sequences of $E_0$ on $\Sigma_{\rho}$. In this case we have to assume that $p$ satisfies \eqref{p2}. Indeed, if $p\geq 1+\frac{2s+\alpha}{N}$, using the same rescaling $\tau^{N/2}u_\tau$ as in Proposition \ref{mappatura1}, we deduce that $E_0$ is unbounded from below.
In the following lemma we collect some basic facts.
\begin{lemma}\label{lemma:fundamental}
Let $\rho>0$ be fixed. Then
\begin{enumerate}[label=(\roman*),ref=\roman*]
\item \label{fundi}$E_0$ is coercive and bounded from below on  $\Sigma_{\rho}$;
\item \label{fundii}$m_{\rho^2}:=\inf_{u\in \Sigma_{\rho}}E_0(u)<0$; 
\item \label{fundiii}every minimizing sequence for $E_0$ in $\Sigma_{\rho}$ is bounded and can be assumed non-negative, radially symmetric
and decreasing.
\end{enumerate}
\end{lemma}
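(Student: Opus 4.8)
The plan is to prove the three items of Lemma~\ref{lemma:fundamental} in order, relying on the fractional Gagliardo--Nirenberg-type estimate \eqref{consfrac} of Lemma~\ref{Young} as the central tool, together with the symmetrization and compactness results already recorded.

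\smallskip
\noindent\textbf{Items \eqref{fundi} and \eqref{fundii}.}
First I would fix $u\in\Sigma_\rho$, so $\|u\|_2=\rho$, and use \eqref{consfrac}: since $p$ satisfies \eqref{p2}, the exponent $\beta=(Np-N-\alpha)/(2sp)$ satisfies $\beta p<1$, i.e. $2\beta p<2$. Hence, writing $t:=\|(-\Delta)^{s/2}u\|_2^2$, one has
\[
E_0(u)\;\ge\;\frac12\,t-C\rho^{2(1-\beta)p}\,t^{\beta p},
\]
and since $\beta p<1$, the right-hand side tends to $+\infty$ as $t\to+\infty$ and is bounded below on $t\ge 0$; this gives simultaneously the coercivity of $E_0$ on $\Sigma_\rho$ (the sublevels control $\|(-\Delta)^{s/2}u\|_2$, and $\|u\|_2$ is fixed, so they are bounded in $H^s$) and the lower bound $m_{\rho^2}>-\infty$. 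For the strict negativity in \eqref{fundii}, I would pick any fixed $0\ne u\in\Sigma_\rho$ and test $E_0$ along the scaling $u_\tau(\cdot)=u(\tau\,\cdot)$ suitably $L^2$-normalized, i.e. along $\tau^{N/2}u_\tau$, exactly as in Proposition~\ref{mappatura1}(ii): there the explicit computation shows $\min_{\tau>0}E_0(\tau^{N/2}u_\tau)<0$ (the quantity $\mathfrak a$ and the prefactor are positive, and it appears with a minus sign). Since $\tau^{N/2}u_\tau\in\Sigma_\rho$ for all $\tau>0$, this exhibits an element of $\Sigma_\rho$ with negative energy, whence $m_{\rho^2}<0$.

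\smallskip
\noindent\textbf{Item \eqref{fundiii}.}
Let $\{u_n\}\subset\Sigma_\rho$ be a minimizing sequence for $E_0$, so $E_0(u_n)\to m_{\rho^2}$. By the coercivity just proved, $\{u_n\}$ is bounded in $H^s(\R^N)$. To replace it by a non-negative, radially symmetric, decreasing one, I would pass to the Schwarz symmetric rearrangement $u_n^*$: one has $\|u_n^*\|_2=\|u_n\|_2=\rho$ so $u_n^*\in\Sigma_\rho$; the fractional Pólya--Szegő inequality gives $\|(-\Delta)^{s/2}u_n^*\|_2\le\|(-\Delta)^{s/2}u_n\|_2$ (which one may also derive from \eqref{gagliarda} via the iteration of polarizations as in the proof of Theorem~\ref{sym-gs-thm}); and the Riesz rearrangement inequality for the Choquard term gives $\int(\mathcal K_\alpha*|u_n^*|^p)|u_n^*|^p\ge\int(\mathcal K_\alpha*|u_n|^p)|u_n|^p$. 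Together these yield $E_0(u_n^*)\le E_0(u_n)$, so $\{u_n^*\}$ is still a minimizing sequence, and it is non-negative, radially symmetric and non-increasing by construction; it remains bounded for the same coercivity reason. This proves \eqref{fundiii}.

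\smallskip
\noindent\textbf{Main obstacle.}
The only genuinely delicate point is establishing the two rearrangement inequalities for the \emph{fractional} seminorm and for the Choquard energy; for the former I would invoke the iterated-polarization argument already used in the paper (the combination of \eqref{gagliarda} with \cite[Theorem~2]{B} together with \cite[Lemma~5.3 and Lemma~5.4]{MV}), which simultaneously justifies passing to the radially decreasing rearrangement and gives the monotonicity claim; the rest is a routine application of \eqref{consfrac} and elementary one-variable analysis of $t\mapsto \tfrac12 t-Ct^{\beta p}$.
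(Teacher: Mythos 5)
Your proposal is correct and follows essentially the same route as the paper: coercivity and the lower bound via \eqref{consfrac} with $\beta p<1$, strict negativity via the $L^2$-preserving rescaling $\tau^{N/2}u_\tau$ (the computation behind Proposition~\ref{mappatura1}), and item (iii) by passing to Schwarz rearrangements. You are in fact slightly more complete than the paper's one-line argument for (iii), since you state explicitly the Riesz rearrangement inequality for the Choquard term alongside the fractional P\'olya--Szeg\H{o} inequality, both of which are needed to conclude $E_0(u_n^*)\le E_0(u_n)$.
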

\begin{proof}
Let $u\in \Sigma_{\rho}$. By \eqref{consfrac} we have
\begin{equation*} 
E_0(u)
\geq 
\frac{1}{2} \|(-\Delta)^{s/2}u\|_2^{2} - C \|(-\Delta)^{s/2}u\|_{2}^{2 \beta p} \rho^{2(1-\beta)p}.
\end{equation*}
Since $p$ satisfies \eqref{p2}, then $0<\beta p < 1$ we get (\ref{fundi}).
To show (\ref{fundii}),  fix $u\in \Sigma_{\rho}$ and observe that the rescaling $\tau^{N/2}u_\tau$ preserves $L^2$-norm. We have that $E_0(\tau^{N/2}u_\tau)$
becomes negative for small $\tau$.
Finally, the  statements in (\ref{fundiii}) easily follow  from the coercivity of $E_0$ and from 
the fact that 
$\|(-\Delta)^{s/2}u^{*}\|_{2}\leq \|(-\Delta)^{s/2}u\|_{2},$
where $u^{*}$ is the symmetric radial decreasing rearrangement of $u$
(see   \cite[Theorem 3]{B}).
\end{proof}
\noindent

\noindent
Hence we have the following compactness result.
\begin{theorem}
\label{EXMinimo}
For every $\rho>0$, every minimizing sequence for $E_0$ in $\Sigma_\rho$ is relatively compact  in $H^s({\mathbb R}^N)$ up to a translation. In particular $E_0$ has a minimum point on $\Sigma_{\rho}$, that can be assumed non-negative, radially symmetric and decreasing.
\end{theorem}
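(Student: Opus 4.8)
The plan is to prove the theorem via the concentration-compactness principle of Lions, adapted to the fractional Sobolev space $H^s(\mathbb R^N)$. Fix $\rho>0$ and let $\{u_n\}\subset\Sigma_\rho$ be a minimizing sequence for $E_0$, that is $E_0(u_n)\to m_{\rho^2}$. By Lemma~\ref{lemma:fundamental}\eqref{fundiii} we may and do assume that each $u_n$ is non-negative, radially symmetric and decreasing, and that $\{u_n\}$ is bounded in $H^s(\mathbb R^N)$. The first step is therefore to extract (up to a subsequence) a weak limit $u_n\rightharpoonup u$ in $H^s(\mathbb R^N)$; by the compact embedding of radial functions (or, if one prefers not to use radiality at this stage, by Lemma~\ref{lionslemma} together with a vanishing/dichotomy analysis) one upgrades this to strong convergence in $L^{2Np/(N+\alpha)}(\mathbb R^N)$, which by Lemma~\ref{Young}\eqref{itY1} is the space controlling the nonlocal term.

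The heart of the argument is a strict subadditivity inequality for the map $\rho^2\mapsto m_{\rho^2}$, which rules out dichotomy and, combined with $m_{\rho^2}<0$ from Lemma~\ref{lemma:fundamental}\eqref{fundii}, also rules out vanishing. Concretely, using the scaling $u\mapsto\theta^{N/2}u_\theta$ (the same $L^2$-norm-preserving dilation used in Proposition~\ref{mappatura1}) one checks that $m_{\lambda\rho^2}=\lambda^{\gamma}m_{\rho^2}$ for a suitable exponent $\gamma=\gamma(N,s,p,\alpha)>1$ when $p$ lies in the range \eqref{p2}; this homogeneity immediately gives the strict inequality $m_{(\lambda+\mu)\rho^2}<m_{\lambda\rho^2}+m_{\mu\rho^2}$ whenever $\lambda,\mu>0$, because a strictly superadditive power function satisfies exactly this. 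Feeding this into the concentration-compactness dichotomy lemma (after applying a translation $x\mapsto x+y_n$ to recenter the mass, which is why compactness is only claimed up to translation) forces the "compactness" alternative: there is no loss of $L^2$-mass at infinity, so $\|u\|_2=\rho$, i.e. $u\in\Sigma_\rho$.

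Once $u\in\Sigma_\rho$, weak lower semicontinuity of $\|(-\Delta)^{s/2}\cdot\|_2^2$ together with the strong $L^{2Np/(N+\alpha)}$-convergence of the recentered sequence gives $E_0(u)\le\liminf E_0(u_n)=m_{\rho^2}$, hence $E_0(u)=m_{\rho^2}$ and $u$ is a minimizer; moreover equality in the lower semicontinuity forces $\|(-\Delta)^{s/2}u_n\|_2\to\|(-\Delta)^{s/2}u\|_2$, which in a Hilbert space upgrades the weak convergence to strong convergence in $H^s(\mathbb R^N)$ (after translation). Finally, since the rearrangement $u\mapsto u^*$ does not increase $E_0$ (by \cite[Theorem~3]{B} and the fact that it preserves the $L^2$-norm and the nonlocal term is nondecreasing under symmetrization via Riesz), the minimizer may be taken non-negative, radially symmetric and decreasing, as in Lemma~\ref{lemma:fundamental}\eqref{fundiii}.

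I expect the main obstacle to be the verification of the strict subadditivity (equivalently, the scaling homogeneity of $m_{\rho^2}$ with exponent $>1$) and the careful bookkeeping of the dichotomy argument in the nonlocal setting: one must show that when the mass splits, the Choquard energy splits in the limit with no favorable cross term, which requires a nonlocal Brezis–Lieb type splitting for $\int(\mathcal K_\alpha*|u|^p)|u|^p$ along the truncated pieces. This is where the restriction \eqref{p2} is genuinely used, since for $p\ge 1+(2s+\alpha)/N$ the functional $E_0$ is unbounded below on $\Sigma_\rho$ and the scheme collapses. The remaining steps — boundedness, weak convergence, lower semicontinuity, and the rearrangement — are routine given the preliminary lemmas.
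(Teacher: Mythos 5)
Your overall architecture is the same as the paper's: vanishing is excluded because $m_{\rho^2}<0$ together with Lemma \ref{lionslemma} would force the Choquard term, and hence $\liminf E_0(u_n)$, to be $\geq 0$; dichotomy is excluded by strict subadditivity of $\rho^2\mapsto m_{\rho^2}$ combined with the Brezis--Lieb splittings for the $L^2$, Gagliardo and nonlocal terms (the last one is \cite[Lemma 2.4]{MV} in the paper); then one concludes strong convergence of the translated sequence. Your endgame is in fact slightly cleaner than the paper's: once the weak limit $v$ has full mass $\rho$ and the nonlocal term converges (strong $L^{2Np/(N+\alpha)}$ convergence via \eqref{FracGN}), the identity $\tfrac12\|(-\Delta)^{s/2}v_n\|_2^2=E_0(v_n)+\tfrac1{2p}\int(\mathcal K_\alpha*|v_n|^p)|v_n|^p$ gives norm convergence and hence strong $H^s$ convergence, whereas the paper runs a Cauchy-sequence argument through Lagrange multipliers for a Palais--Smale minimizing sequence.

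There are, however, two concrete soft spots. First, the mechanism you name for the key inequality does not work: the dilation $u\mapsto\theta^{N/2}u_\theta$ \emph{preserves} the $L^2$-norm, so it maps $\Sigma_\rho$ into itself and cannot relate $m_{\lambda\rho^2}$ to $m_{\rho^2}$; no homogeneity of $m$ can be ``checked'' with it. The homogeneity claim itself is true in the range \eqref{p2}, but you must use a mass-changing scaling, e.g. $u\mapsto\theta^{\frac{\alpha+2s}{2(p-1)}}u(\theta\cdot)$ (the one appearing in Proposition \ref{PropoPietro}): under it both terms of $E_0$ scale by $\theta^{\frac{\alpha+2s}{p-1}+2s-N}$ while $\|u\|_2^2$ scales by $\theta^{\frac{\alpha+2s}{p-1}-N}$, and since the latter exponent is positive exactly when \eqref{p2} holds, one gets $m_{\lambda\rho^2}=\lambda^{\gamma}m_{\rho^2}$ with $\gamma=1+2s\bigl(\tfrac{\alpha+2s}{p-1}-N\bigr)^{-1}>1$, which with $m_{\rho^2}<0$ yields the strict subadditivity. (The paper avoids homogeneity altogether: it shows minimizing sequences lie in $\Sigma_\rho^\nu$ for some $\nu>0$ and deduces only the weaker $m_{\tau^2\rho^2}<\tau^2 m_{\rho^2}$, $\tau>1$, which suffices.) Second, you cannot ``assume each $u_n$ radially symmetric and decreasing'': replacing $u_n$ by its rearrangement changes the sequence, so that route only produces \emph{a} minimizer, not relative compactness of \emph{every} minimizing sequence up to translation, which is what the theorem asserts; your parenthetical alternative (Lions' lemma plus recentering, as in the paper) is the one you must actually carry out, with the rearrangement used only for the final statement about the minimizer.
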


\begin{proof}
Let $\{u_n\}$ be a minimizing sequence for $E_0$ on $\Sigma_{\rho}$ satisfying $E'_0(u_n)\to 0$ as $n\to+\infty$. In view of Lemma \ref{lemma:fundamental} it is bounded in $H^s(\mathbb{R}^N)$ and then there exists $u\in H^s(\mathbb{R}^N)$ such that $u_n \rightharpoonup u$.
Now let $R>0$. If it were
\[
\lim_n \sup_{y\in\mathbb{R}^N} \int_{B_R(y)} u_{n}^2 =0,
\]
then, by Lemma \ref{lionslemma}, we would have
$u_{n} \to 0$ in $L^q(\mathbb{R}^N)$ and then, by (\ref{itY1}) of Lemma \ref{Young},
\begin{equation*}\label{eq:ovvia}
\int ( \mathcal K_{\alpha}*|u_{n}|^{p}) |u_{n}|^{p} \to 0,
\end{equation*}
implying that $\lim_{n}E_0(u_{n})\geq0$, which is a contradiction with (\ref{fundii}) in Lemma \ref{lemma:fundamental}.
Then, possibly passing to a subsequence, there exists a $\delta>0$ such that
\begin{equation*} 
\sup_{n}\sup_{y\in \mathbb R^{N}}\int_{B_R(y)}|u_{n}|^{2}\geq \delta.
\end{equation*}
We infer that there exists $\{y_{n}\}\subset \mathbb R^{N}$ such that
\[
\int_{B_R(y_{n})}|u_{n}|^{2}\geq\delta.
\]
Hence, defining $v_n = u_n(\cdot + y_n)$ and by  the compact embedding of $H^s_{\rm loc}(\mathbb{R}^N)$ into $L^2_{\rm loc}(\mathbb{R}^N)$ (see e.g. \cite[Corollary 7.2]{DPV})
we get a bounded minimizing sequence whose weak limit $v$ is nontrivial, $\| v \|_2 \leq \rho$,
\begin{align}
\label{eq:BL1}
\| v_n - v \|_2^2 + \| v \|_2^2 
& = 
\| v_n \|_2^2 + o_n(1),\\
\label{eq:BL2}
\| (-\Delta)^{s/2} (v_n - v) \|_2^2 + \| (-\Delta)^{s/2} v \|_2^2 
& = 
\| (-\Delta)^{s/2} v_n \|_2^2 + o_n(1)
\end{align}
and, by \cite[Lemma 2.4]{MV}, 
\begin{equation}
\label{eq:BL3}
\int(\mathcal{K}_\alpha * |v_{n} - v |^p)|v_{n} - v|^p 
+ \int(\mathcal{K}_\alpha * |v |^p)|v|^p
=
\int(\mathcal{K}_\alpha * |v_{n}|^p)|v_{n}|^p + o_n(1).
\end{equation}
Assume by contradiction that $\| v \|_2 = \mu < \rho$.
Since, by \eqref{eq:BL1},
\[
a_n= \frac{\sqrt{\rho^2 - \mu^2}}{\| v_n - v \|_2} \to 1
\]
and, by \eqref{eq:BL2} and \eqref{eq:BL3},
\[
E_0(v_n - v) + E_0(v) = m_{\rho^2} + o_n(1),
\]
and then
\[
E_0(a_n(v_n - v)) + E_0(v) 
= E_0(v_n - v) + E_0(v) + o_n(1) 
= m_{\rho^2} + o_n(1).
\]
Then, since $\| a_n(v_n - v) \|_2^2= \rho^2 - \mu^2$, we get
\begin{equation}
\label{eq:abs}
m_{\rho^2 - \mu^2} + m_{\mu^2} \leq m_{\rho} + o_n (1).
\end{equation}
Now let us define for $\nu>0$, 
$\Sigma_{\rho}^{\nu}=\left\{w\in\Sigma_{\rho} : \int(\mathcal{K}_\alpha * |w|^p)|w|^p \geq \nu \right\}$. We show that
 there exists $\nu>0$ such that
\begin{equation}
\label{eq:mtm}
m_{\rho^2}=\inf_{w\in\Sigma_\rho^{\nu}} E_0(w).
\end{equation}
Of course 
$m_{\rho^2}
\leq \inf_{u\in\Sigma_\rho^{\nu}} E_0(u)$.
Assuming by contradiction that, for every $\nu>0$,
$m_{\rho^2}
< \inf_{w\in\Sigma_\rho^{\nu}}E_0(w)$,
we can find a minimizing sequence $\{w_n\}$ such that 
\[
E_0(w_n)\to m_{\rho^2} 
\quad
\hbox{ and }
\quad
\int(\mathcal{K}_\alpha * |w_n|^p)|w_n|^p\to 0.
\]
Thus
\[
0
\leq \frac{1}{2} \|(-\Delta)^{s/2} w_n\|_2^2 
= E_0(w_n) + \frac{1}{p}\int(\mathcal{K}_\alpha * |w_n|^p)|w_n|^p
\to m_{\rho^2} < 0.
\]
Then, by  \eqref{eq:mtm}, we easily get $m_{\tau^2 \rho^2} < \tau^2 m_{\rho^2}$ for every $\tau>1$.
Thus, for all $\mu\in(0,\rho)$
\begin{equation*}
m_{\rho^2} < m_{\mu^2} + m_{\rho^2 - \mu^2}
\end{equation*}
which is in contradiction with \eqref{eq:abs}. Hence $v \in \Sigma_\rho$, $\| v_n - v \|_2 = o_n(1)$ and, by applying \eqref{FracGN}, 
\begin{equation}
\label{eq:convstrong}
\| v_n - v \|_{2Np/(N+\alpha)} =  o_n(1).
\end{equation}
It remains to show that $\| (-\Delta)^{s/2} (v_n - v) \|_2 = o_n(1)$. Since $\{v_n\}$ is a bounded Palais-Smale sequence, there exists $\{ \lambda_n \} \subset \mathbb{R}$ such that for every $w\in H^s(\mathbb{R}^N)$
\[
E'_0(v_n)[w] -\lambda_n \int v_n w = o_n(1) 
\quad
\hbox{and}
\quad
E'_0(v_n)[v_n] -\lambda_n \| v_n \|_2^2 = o_n(1). 
\]
Then 
we obtain that $\{ \lambda_n \}$ is bounded and
\[
(E'_0(v_n)- E'_0(v_m))[v_n-v_m]
-\lambda_n \int v_n (v_n-v_m) + \lambda_m \int v_m (v_n - v_m)\to 0
\quad \hbox{ as }
m,n\to \infty.
\]
Since, by Hardy-Littlewood-Sobolev inequality and \eqref{eq:convstrong}
\[
\left|
\int(\mathcal{K}_\alpha * |v_n|^p)|v_n|^{p-2} v_n (v_n - v_m)
\right|
\leq
C \|v_n\|_{2Np/(N+\alpha)}^{2p-1} \| v_n -v_m\|_{2Np/(N+\alpha)} \to 0
\]
and
\[
\lambda_n \int v_n ( v_n - v_m) \to 0
\]
as $m,n\to \infty$, we have that $\{ v_n\}$ is a Cauchy sequence in $H^s(\mathbb{R}^N)$ and we get that $\{v_n\}$ is relatively compact. The last statement of the Theorem is achieved by taking into account (\ref{fundiii}) of Lemma \ref{lemma:fundamental}.
\end{proof}

\noindent Finally, following step by step \cite[Proof of Lemma 2.6]{CSS}, we get the following relations between the ground states 
(as minima of $E_\omega$ on $\mathcal{N}_\omega$) and the minima of $E_0$ on $\Sigma_\rho$.
\begin{proposition}\label{PropoPietro}
	For every $\rho >0$, the minimization problems 
	\[
	\min_{u\in\Sigma_\rho} E_0(u)
	\quad
	\hbox{and}
	\quad
	\min_{u\in\mathcal{N}_\omega} E_\omega(u)
	\]
	are equivalent.
	Moreover the $L^2$-norm $\rho$ of any ground state $u$ of \eqref{equomega} satisfies 
	\begin{equation*}
	\rho^2= \frac{N+\alpha - (N-2s)p}{\omega s (p-1)}\min_{u\in\mathcal{N}_\omega} E_\omega(u)
	\end{equation*}
	and
	\[
	\min_{u\in\Sigma_\rho} E_\omega(u)
	=
	\min_{u\in\mathcal{N}_\omega} E_\omega(u).
	\]
\end{proposition}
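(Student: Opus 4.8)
The plan is to route everything through the scaling calculus of Proposition~\ref{mappatura1} and the scale–invariant quantity $W_0:=\inf_{u\neq 0}W(u)>0$, so that both minimization problems become explicit functions of $W_0$; recall that $W$ is invariant under the two–parameter group $u\mapsto\sigma\,u(\lambda\,\cdot)$ and that the dilations $\tau^{N/2}u_\tau$ preserve the $L^2$–norm. Writing $c_\omega:=\min_{\mathcal N_\omega}E_\omega$, the first display of Proposition~\ref{mappatura1} (each ray $\tau\mapsto E_\omega(\tau u)$ has a unique maximum, lying on $\mathcal N_\omega$) gives $c_\omega=(\tfrac12-\tfrac1{2p})(\inf S)^{p/(p-1)}$, while Proposition~\ref{mappatura1}(i) together with the scale–invariance of $W$ gives $\inf S=A\,W_0$ for the explicit constant $A$ of Proposition~\ref{mappatura1}(i); this infimum is attained by Theorem~\ref{ILTEOREMA}. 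On the other hand, since $\{\tau^{N/2}w_\tau:\tau>0\}\subset\Sigma_\rho$ for $w\in\Sigma_\rho$, one has $m_{\rho^2}:=\min_{\Sigma_\rho}E_0=\inf_{w\in\Sigma_\rho}\min_{\tau>0}E_0(\tau^{N/2}w_\tau)$, which by Proposition~\ref{mappatura1}(ii) (available under~\eqref{p2}) and the scale–invariance of $W$ equals $-\mathfrak a\big((\omega\rho^2)^{N+\alpha-(N-2s)p}W_0^{-2sp}\big)^{1/((2s+\alpha)-N(p-1))}$; it is attained by Theorem~\ref{EXMinimo}.

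To prove the equivalence, let $v$ minimize $E_0$ on $\Sigma_\rho$. Comparing $E_0(v)=m_{\rho^2}$ with the formula just obtained and using strict monotonicity in $W$ forces $W(v)=W_0$; moreover $v$ is scale–optimal, i.e.\ $\tau\mapsto E_0(\tau^{N/2}v_\tau)$ attains its minimum at $\tau=1$, and differentiating the explicit expression $\tfrac{\tau^{2s}}2\|(-\Delta)^{s/2}v\|_2^2-\tfrac{\tau^{N(p-1)-\alpha}}{2p}\int(\mathcal K_\alpha*|v|^p)|v|^p$ at $\tau=1$ yields the Poho\v zaev--type relation $2ps\|(-\Delta)^{s/2}v\|_2^2=(N(p-1)-\alpha)\int(\mathcal K_\alpha*|v|^p)|v|^p$. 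Now $v$ satisfies an Euler--Lagrange equation $\Ds v-\mu v=(\mathcal K_\alpha*|v|^p)|v|^{p-2}v$; testing with $v$ and substituting the Poho\v zaev--type relation gives $\mu\|v\|_2^2=\tfrac{N(p-1)-\alpha-2ps}{2ps}\int(\mathcal K_\alpha*|v|^p)|v|^p$, which is strictly negative precisely under~\eqref{p2}. Hence $\omega_0:=-\mu>0$, $v$ solves $(\mathcal P_{\omega_0})$, and combining the Poho\v zaev--type relation with the Nehari identity of $(\mathcal P_{\omega_0})$ shows $v$ minimizes the corresponding $S$–quotient, so with $W(v)=W_0$ and Proposition~\ref{mappatura1}(i) applied with $\omega_0$ it is a ground state of $(\mathcal P_{\omega_0})$. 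The explicit rescaling $w(x)=\sigma v(\lambda x)$ with $\lambda=(\omega/\omega_0)^{1/(2s)}$ and $\sigma=\lambda^{(2s+\alpha)/(2(p-1))}$ then solves $(\mathcal P_\omega)$ and, $W$ being scale–invariant, still realizes $W_0$, hence lies on $\mathcal N_\omega$ and minimizes $E_\omega$. Running the same steps in reverse — from a minimizer $u$ of $E_\omega$ on $\mathcal N_\omega$, pick the unique $\sigma,\lambda$ putting $\sigma u(\lambda\,\cdot)$ on $\Sigma_\rho$ and making it scale–optimal for $E_0$ — produces a minimizer of $E_0$ on $\Sigma_\rho$; this is the asserted equivalence, for every $\rho>0$.

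For the two formulas, fix a ground state $u$ of~\eqref{equomega} and set $\rho=\|u\|_2$. As above, $u$ minimizes $S$, so $S(u)=\min_\tau S(u_\tau)$ and $\tfrac{d}{d\tau}S(u_\tau)|_{\tau=1}=0$; together with the Nehari identity $\|(-\Delta)^{s/2}u\|_2^2+\omega\|u\|_2^2=\int(\mathcal K_\alpha*|u|^p)|u|^p$ (testing the equation with $u$) this yields the Poho\v zaev--type identity $2ps\|(-\Delta)^{s/2}u\|_2^2=(N(p-1)-\alpha)\int(\mathcal K_\alpha*|u|^p)|u|^p$. These two identities express $\omega\|u\|_2^2=\tfrac{(N+\alpha)-p(N-2s)}{2ps}\int(\mathcal K_\alpha*|u|^p)|u|^p$ and $E_\omega(u)=\tfrac{p-1}{2p}\int(\mathcal K_\alpha*|u|^p)|u|^p$, and eliminating the convolution term gives $\rho^2=\tfrac{N+\alpha-(N-2s)p}{\omega s(p-1)}\,c_\omega$. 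Finally, the Poho\v zaev--type and Nehari identities are precisely the first–order condition making $u$ scale–optimal for $E_0$, and $W(u)=W_0$ (since $S(u)=\min_\tau S(u_\tau)=A\,W(u)\ge A\,W_0=\inf S=S(u)$ forces equality); hence $E_0(u)=\min_\tau E_0(\tau^{N/2}u_\tau)=m_{\rho^2}$, so $u$ minimizes $E_0$ on $\Sigma_\rho$, and consequently $\min_{\Sigma_\rho}E_\omega=m_{\rho^2}+\tfrac\omega2\rho^2=E_0(u)+\tfrac\omega2\|u\|_2^2=E_\omega(u)=\min_{\mathcal N_\omega}E_\omega$.

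The main obstacle is handling the analytic points around the scaling algebra: first, that a constrained minimizer of $E_0$ on $\Sigma_\rho$ is scale–optimal and has a strictly negative Lagrange multiplier — both hinging on~\eqref{p2}, which is exactly the condition making $\tau\mapsto E_0(\tau^{N/2}v_\tau)$ bounded below with an interior minimum — so that its Euler--Lagrange equation is genuinely of type $(\mathcal P_{\omega_0})$ with $\omega_0>0$ and the rescaling is admissible; second, extracting the Poho\v zaev--type identity from the scale–invariance of $S$ along dilations rather than by testing with $x\cdot\nabla u$, which circumvents the fractional Poho\v zaev identity~\eqref{Pohozaev}, whose proof via the Caffarelli--Silvestre extension appears only later in Section~\ref{nonex}. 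Everything else is the bookkeeping already carried out in Proposition~\ref{mappatura1}.
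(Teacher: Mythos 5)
Your argument is correct, but it takes a genuinely different route from the paper's. The paper follows \cite[Proof of Lemma 2.6]{CSS}: it sets up the rescaling correspondence $u\mapsto\tau^{(\alpha+2s)/(2(p-1))}u(\tau\cdot)$ between constrained critical points of $E_0$ on $\Sigma_\rho$ (whose Lagrange multiplier is positive because $m<0$, via \eqref{49bis}) and critical points of $E_\omega$ on $\mathcal{N}_\omega$, and then relates the levels $m$ and $c$ by solving the linear system formed by the Nehari identity, the energy identity and the Poho\v zaev identity \eqref{Pohozaev} --- the extension-based identity of Section~\ref{nonex}, applied to the critical point $w$ (with the $C^2$ issue handled through Remark~\ref{Questa}) --- which yields $\|w\|_2^2=\tfrac{N+\alpha-(N-2s)p}{\omega s(p-1)}c$ and the monotone relation \eqref{mdic}, so that minimizers correspond to minimizers. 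You instead work only with minimizers and organize everything around the scale-invariant functional $W$ and the explicit formulas of Proposition~\ref{mappatura1}: both minimal levels are expressed through $W_0=\inf W$, and the needed Poho\v zaev-type relation is extracted by differentiating along dilations (of $S$, resp.\ of the mass-preserving scaling of $E_0$), using that minimizers are dilation-optimal. What this buys is a self-contained argument for ground states that avoids the forward reference to \eqref{Pohozaev} and the Caffarelli--Silvestre extension, at the price of extra bookkeeping with $W$; the paper's route, by contrast, applies verbatim to arbitrary critical values, not only minima. Two spots in your writeup are compressed but harmless: (i) after rescaling $v$ to $w$ solving \eqref{equomega}, the fact that $w$ realizes $W_0$ is not by itself enough to conclude it is a ground state --- you also need $w$ to be dilation-optimal for the $S$-quotient (equivalently to satisfy the Poho\v zaev identity), which does transfer from $v$ because $\lambda^{2s}\omega_0=\omega$; alternatively, note that your rescaling multiplies the energies of corresponding solutions by the fixed constant $\sigma^{2}\lambda^{2s-N}$, hence maps ground states to ground states; (ii) in the reverse direction one should observe that the two requirements $\|\sigma u(\lambda\cdot)\|_2=\rho$ and scale-optimality for $E_0$ determine $(\sigma,\lambda)$ uniquely because $N(p-1)\neq 2s+\alpha$ under \eqref{p2}. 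Both are one-line verifications, so there is no genuine gap.
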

\begin{proof}
	Let $\rho,\omega>0$, 
	\[
	\mathfrak{K}_{\Sigma_{\rho}}
	=\left\{
	m \in \mathbb{R}_- :
	\exists u \in \Sigma_{\rho} \hbox{ s.t. } E_0'|_{\Sigma_{\rho}}(u)=0 \hbox{ and } E_0(u)=m
	\right\}
	\]
	and
	\[
	\mathfrak{K}_{\mathcal{N}_\omega}
	=\left\{
	c \in \mathbb{R} : 
	\exists u \in \mathcal{N}_\omega \hbox{ s.t. } E_\omega'(u)=0 \hbox{ and }  E_\omega(u)=c
	\right\}
	\]
	where, for all $u,v\in \Hs$,
	\[
	E_{\omega}'(u)[v]=\int (-\Delta)^{s/2} u \ (-\Delta)^{s/2} v+\omega\int uv -\int \X |u|^{p-2} u v.
	\]
	First of all we observe that, by Lemma \ref{lemma:fundamental}
	and Theorem \ref{EXMinimo}, 
	$\mathfrak{K}_{\Sigma_{\nu}}$ is well defined.\\
	Let now $u \in \Sigma_{\rho}$ such that  $E_0'|_{\Sigma_{\rho}}(u)=0$  and $E_0(u)=m$ with $m<0$. Then there exists $\lambda \in \mathbb{R}$ such that
	\begin{equation}
	\label{eq:equiv1}
	(-\Delta)^s u - (\mathcal{K}_\alpha * |u|^p)|u|^{p-2} u = -\lambda u
	\end{equation}
	and so
	\begin{equation}
	\label{eq:equiv2}
	\| (-\Delta)^{s/2} u \|_2^2 - \int(\mathcal{K}_\alpha * |u|^p)|u|^p = -\lambda \rho^2.
	\end{equation}
	Then, since $E_0(u)=m$, by \eqref{eq:equiv2} we get
	\begin{equation}
	\label{49bis}
	\frac{p-1}{2p} \| (-\Delta)^{s/2} u \|_2^2= m + \frac{\lambda \rho^2}{2p}
	\end{equation}
	and so $\lambda > 0$.
	Now let
	\[
	w(x):= \tau^{\frac{\alpha+2s}{2(p-1)}} u (\tau x)
	\quad \hbox{with } \tau=\left(\frac{\omega}{\lambda}\right)^{1/(2s)}.
	\]
	We have that $w$ solves
	\[
	(- \Delta)^s w + \omega w - (\mathcal{K}_\alpha * |w|^p)|w|^{p-2} w = 0
	\]
	and so $w\in \mathcal{N}_\omega$, $E'_{\omega}(w)=0$ and $c=E_{\omega}(w)\in  \mathfrak{K}_{\mathcal{N}_{\omega}}$.\\
	Viceversa, if $w\in \mathcal{N}_{\omega}$ such that $E'_{\omega}(w)=0$ and $c=E_{\omega}(w)$, we consider 
	\[
	u(x):= \tau^\frac{\alpha+2s}{2(p-1)} w (\tau x)
	\quad \hbox{with } \tau=\left(\frac{\rho}{\|w\|_2}\right)^\frac{2(p-1)}{\alpha + 2s - N(p-1)}.
	\]
	We have that $u\in \Sigma_\rho$, \eqref{eq:equiv1} holds for
	\[
	\lambda=\omega\tau^{2s}=\omega\left(\frac{\rho}{\|w\|_2}\right)^\frac{4s(p-1)}{\alpha + 2s - N(p-1)}
	\]
	and
	\begin{equation}
	\label{eq:equiv4}
	m =
	\tau^\frac{\alpha+2sp-N(p-1)}{p-1} \left(c-\frac{\omega}{2}\|w\|_2^2\right)
	=
	\left(\frac{\rho}{\|w\|_2}\right)^{\frac{2(\alpha+2sp-N(p-1))}{\alpha + 2s - N(p-1)}} 
	\left(c-\frac{\omega}{2}\|w\|_2^2\right).
	\end{equation}
	By Pohozaev identity \eqref{Pohozaev} 
	and since  $w\in\mathcal{N}_{\omega}$ and $E_\omega(w)=c$ we get  the system
	\[
	\begin{cases}
	\displaystyle 
	(N-2s) \| (-\Delta)^{s/2} w \|_2^2 
	+ \omega N \| w \|_2^2 
	- \frac{\alpha + N}{p} \int(\mathcal{K}_\alpha * |w|^p)|w|^p
	=0\\
	\displaystyle 
	\| (-\Delta)^{s/2} w \|_2^2
	+ \omega \| w \|_2^2
	- \int(\mathcal{K}_\alpha * |w|^p)|w|^p
	=0\\
	\displaystyle 
	\frac{1}{2} \| (-\Delta)^{s/2} w \|_2^2 
	+ \frac{\omega}{2} \| w \|_2^2
	- \frac{1}{2p}\int(\mathcal{K}_\alpha * |w|^p)|w|^p
	=c
	\end{cases}
	\]
	from which
	\[
	\| w \|_2^2
	=\frac{N+\alpha - (N-2s)p}{\omega s (p-1)} c.
	\]
	Thus \eqref{eq:equiv4} becomes
	\begin{equation}
	\label{mdic}
	m=
	-\frac{\alpha+2s-N(p-1)}{2}
	\left(\frac{\omega\rho^2}{N+\alpha - (N-2s)p}\right)^{\frac{\alpha+2sp-N(p-1)}{\alpha+2s-N(p-1)}}
	\left(\frac{s(p-1)}{c}\right)^{\frac{2s(p-1)}{\alpha+2s-N(p-1)}}
	\end{equation}
	and, for
	\[
	\rho^2=\frac{N+\alpha - (N-2s)p}{\omega s (p-1)} c,
	\]
	\eqref{mdic} implies
	\[
	m+\frac{\omega}{2}\rho^2=c.
	\]
	Hence the conclusions easily follow.
\end{proof}

\noindent
 Finally we study the Morse index of the ground state. In the last part
 of this Section we assume $2 \leq p<1+(2s+\alpha)/N$ 
 to have that the  functional $E_{\omega}$ is $C^{2}$
 and $s>1/2$.
If $ u$ is  the minimum of $E_0$ on $\Sigma_{\rho}$ we have
\begin{equation}\label{nehari}
\int |(-\Delta)^{s/2}u |^2 -\int (\mathcal K_{\alpha}*|u|^{p})|u|^{p} = - \lambda \rho^{2}
\end{equation}
with $\lambda>0$ (by  \eqref{49bis}).
Now consider
%
\begin{equation}\label{$E''$}
\begin{split}
E_{\lambda}''(u)[\xi, \eta]&=  \int (-\Delta)^{s/2}\xi (-\Delta)^{s/2}\eta +\lambda\int \xi\eta  \\
&\quad- p \int \left({\mathcal K}_{\alpha}* |u|^{p-2}u \eta\right) |u|^{p-2}u\xi 
- (p-1)\int \left(\mathcal K_{\alpha}* |u|^{p} \right) |u|^{p-2}  \xi \eta.
\end{split}
\end{equation}

\noindent
To obtain information on the Morse index, we need to study $\ker E''_{\lambda}(u).$
\noindent
Since the problem is invariant for the  group of translations,
the  solutions of \eqref{equomega} will never be isolated:
in other words $\ker E_{\lambda}''(u)\neq \{0\}$ and in particular
\begin{equation}\label{ker}
\textrm{span} \{ \nabla u \} \subset \ker E_{\lambda}''(u).
\end{equation}
Indeed, for every $a\in \mathbb R^{N}$, consider the action of the group of the translations in $\mathbb R^{N}$   induced on $H^{s}(\mathbb R^{N})$, that is
\begin{equation*}
\mathfrak t_{a}: u\in H^{s}(\mathbb R^{N})\longmapsto u(\cdot+a)\in H^{s}(\mathbb R^{N})
\end{equation*}
which is linear and isometric. Since $E_{\lambda}\circ \mathfrak t_{a}= E_{\lambda},$
we have $E_{\lambda}'(\mathfrak t_{a}u)[v]=E_{\lambda}'(u)[\mathfrak t_{-a}v]$, for every $u,v\in H^{s}(\mathbb R^{N}).$
For every $u\in H^{s}(\mathbb R^{N})$ it is also convenient to introduce
the following map
\begin{equation*}
\mathfrak s_{u}: a\in \mathbb R^{N}\longmapsto u(\cdot+a)\in H^{s}(\mathbb R^{N}).
\end{equation*}
Of course, for a generic fixed $u\in H^{s}(\mathbb R^{N})$, the map $\mathfrak s_{u}$  does not need to be differentiable but
(for example) whenever $u\in H^{s}(\mathbb R^{N})$
is a solution   of \eqref{equomega}  as in Proposition \ref{le:dif} it does,  and the differential in $0$ given by
 $$\mathfrak s_{u}'(0)[b]= \nabla u \cdot b\in H^{s}(\mathbb R^{N}),\qquad \text{for all} \  b\in \mathbb R^{N}.$$
 Hence, in this case, 
 by differentiating in $0$ the map
 $$a\in \mathbb R^{N}\longmapsto E_{\lambda}'(\mathfrak s_{u}(a))\in H^{-s}(\mathbb R^{N}),$$
 we get $E_{\lambda}''(\mathfrak s_{u}(0))[\mathfrak s'_{u}(0)[b], \cdot ]=0$ for all $b\in \mathbb R^{N}$
and this gives \eqref{ker}.

\medskip

\noindent
It would be interesting to understand if the ground state is nondegenerate in the sense that
\begin{equation*}
\textrm{span} \{ \nabla u \} = \ker E_{\lambda}''(u).
\end{equation*}
We define the Morse index  $\mathfrak i_{\rm Morse}(u)$
as the maximal dimension of subspaces of $H^{s}(\mathbb R^{N})$
on which $E_{\lambda}''(u)$ is negative definite. We have the following result
which completes the proof of Theorem \ref{EXMinimo-Intro}.

\begin{proposition}
\label{morse-thm}
Let $u\in \Sigma_{\rho}$ be a ground state and $T_{ u}\Sigma_{\rho}=\{w\in \Hs: \int  u w=0\}$. Then 
\begin{enumerate}[label=(\roman*),ref=\roman*]
\item \label{+} $E_{\lambda}''(u)$ is positive semidefinite on $T_{ u}\Sigma_{\rho}$,
\item \label{0} $\inf_{w\in T_{u}\Sigma_{\rho}} E_{\lambda}''(u)[w,w]=0$.
\item \label{1}$\mathfrak i_{\rm Morse}(u)=1$.
\end{enumerate}
\end{proposition}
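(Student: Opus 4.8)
The plan is to regard $u$ as a \emph{free} critical point of the $C^{2}$ functional $E_{\lambda}:=E_{0}+\tfrac{\lambda}{2}\|\cdot\|_{2}^{2}$ — the hypotheses $s>1/2$ and $p\geq 2$ are exactly what make $E_{\omega}$, hence $E_{\lambda}$, of class $C^{2}$ (cf.\ Proposition~\ref{le:dif} and the regularity of $(\mathcal K_{\alpha}*u^{p})u^{p-1}$). Thus $E_{\lambda}'(u)=0$, testing with $u$ (equivalently \eqref{nehari}) gives $\|(-\Delta)^{s/2}u\|_{2}^{2}+\lambda\|u\|_{2}^{2}=\int(\mathcal K_{\alpha}*|u|^{p})|u|^{p}$, and $E_{\lambda}''(u)$ is the bilinear form under study. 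The core of the proof is part \eqref{+}, the second-order necessary condition at the constrained minimizer $u$ of $E_{0}$ on $\Sigma_{\rho}$. Given $w\in T_{u}\Sigma_{\rho}$, i.e.\ $\int uw=0$, I would test along the admissible curve $\gamma(t):=\rho\,(u+tw)\|u+tw\|_{2}^{-1}\in\Sigma_{\rho}$, which satisfies $\gamma(0)=u$ and $\gamma'(0)=w$. Since $g(t):=E_{0}(\gamma(t))$ attains a minimum at $t=0$ we have $g''(0)\geq 0$; expanding, $g''(0)=E_{0}''(u)[w,w]+E_{0}'(u)[\gamma''(0)]$, and using the Euler--Lagrange identity $E_{0}'(u)[v]=-\lambda\int uv$ for all $v$ together with $\tfrac{d^{2}}{dt^{2}}\|\gamma\|_{2}^{2}\equiv 0$ (which forces $\int u\,\gamma''(0)=-\|w\|_{2}^{2}$), one obtains $g''(0)=E_{0}''(u)[w,w]+\lambda\|w\|_{2}^{2}=E_{\lambda}''(u)[w,w]$. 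Hence $E_{\lambda}''(u)[w,w]\geq 0$ on $T_{u}\Sigma_{\rho}$, which is \eqref{+}.

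Part \eqref{0} then follows from translation invariance. Since $u$ solves $(-\Delta)^{s}u+\lambda u=(\mathcal K_{\alpha}*|u|^{p})|u|^{p-2}u$ with $s>1/2$, $p\geq 2$, Proposition~\ref{le:dif} gives $u\in H^{2s+1}(\mathbb R^{N})$ and $\nabla u\in H^{s}(\mathbb R^{N})$, so that \eqref{ker} yields $\partial_{i}u\in\ker E_{\lambda}''(u)$ for every $i$. Moreover $\int u\,\partial_{i}u=\tfrac12\int\partial_{i}(u^{2})=0$, hence $\partial_{i}u\in T_{u}\Sigma_{\rho}$, and since the minimizer of Theorems~\ref{EXMinimo} and~\ref{sym-gs-thm} is positive, radial and radially decreasing, hence non-constant, $\partial_{1}u\not\equiv 0$. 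Therefore $0=E_{\lambda}''(u)[\partial_{1}u,\partial_{1}u]\geq\inf_{w\in T_{u}\Sigma_{\rho}}E_{\lambda}''(u)[w,w]\geq 0$ by \eqref{+}, which proves \eqref{0} and also shows that $E_{\lambda}''(u)$ is merely positive \emph{semi}definite on $T_{u}\Sigma_{\rho}$, with $\operatorname{span}\{\nabla u\}$ contained in its kernel.

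For \eqref{1}, I first record that $E_{\lambda}''(u)$ is negative on $\operatorname{span}\{u\}$: plugging $\xi=\eta=u$ into the displayed expression for $E_{\lambda}''(u)$, using $|u|^{p-2}u^{2}=|u|^{p}$ and the identity $\|(-\Delta)^{s/2}u\|_{2}^{2}+\lambda\|u\|_{2}^{2}=\int(\mathcal K_{\alpha}*|u|^{p})|u|^{p}$, a one-line computation gives $E_{\lambda}''(u)[u,u]=-2(p-1)\int(\mathcal K_{\alpha}*|u|^{p})|u|^{p}<0$, so $\mathfrak i_{\rm Morse}(u)\geq 1$. For the reverse bound I use the splitting $H^{s}(\mathbb R^{N})=\operatorname{span}\{u\}\oplus T_{u}\Sigma_{\rho}$ (valid since $\int u^{2}=\rho^{2}\neq 0$, $T_{u}\Sigma_{\rho}$ being a closed hyperplane): if some subspace $V\subset H^{s}(\mathbb R^{N})$ with $\dim V\geq 2$ carried a negative definite $E_{\lambda}''(u)$, then $V\cap T_{u}\Sigma_{\rho}$, being the kernel of the linear functional $w\mapsto\int uw$ restricted to $V$, would have dimension $\geq\dim V-1\geq 1$, producing $w\neq 0$ with simultaneously $E_{\lambda}''(u)[w,w]<0$ (from $V$) and $E_{\lambda}''(u)[w,w]\geq 0$ (from \eqref{+}) --- a contradiction. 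Hence $\mathfrak i_{\rm Morse}(u)=1$.

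The main obstacle is part \eqref{+}: converting the variational statement ``$u$ minimizes $E_{0}$ on the infinite-dimensional sphere $\Sigma_{\rho}$'' into the pointwise inequality on the tangent space requires the $C^{2}$ regularity of the functional — which is precisely why one restricts to $p\geq 2$ and $s>1/2$, via Proposition~\ref{le:dif} — and a careful second-order expansion along curves in $\Sigma_{\rho}$. Once \eqref{+} is in hand, parts \eqref{0} and \eqref{1} reduce to the translation invariance \eqref{ker} and an elementary codimension-one linear-algebra argument.
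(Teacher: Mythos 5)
Your proof is correct and follows essentially the same route as the paper: the second-order necessary condition along curves in $\Sigma_{\rho}$ (you simply make the curve $\rho(u+tw)\|u+tw\|_2^{-1}$ explicit where the paper takes a generic one), the translation-invariance argument with $\partial_i u\in T_u\Sigma_\rho$ for part (ii), and the computation $E_{\lambda}''(u)[u,u]=2(1-p)\int(\mathcal K_{\alpha}*|u|^{p})|u|^{p}<0$ combined with the codimension-one splitting $H^{s}(\mathbb R^{N})=T_{u}\Sigma_{\rho}\oplus\operatorname{span}\{u\}$ for part (iii). No substantive differences to report.
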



\begin{proof}
Let $v$ any element of $T_{u}\Sigma_{\rho}$ and 
 $\gamma:(-\varepsilon, \varepsilon)\to \Sigma_{\rho}$ a smooth curve such that
  $\gamma(0)=u$ and $\gamma'(0)=v$.
Since $u$ is the minimum of $E_0$ on $\Sigma_{\rho}$, it is 
\begin{equation*}\label{2derivata}
\frac{d^{2}}{d\tau^{2}}E_0(\gamma(\tau)) \Big|_{\tau=0}\geq0
\end{equation*}
which explicitly reads as
\begin{equation}\label{eq:min}
0\leq E_0''(u)[v, v]+E_0'(u)[\gamma''(0)]=E_0''(u)[v, v]-\lambda\int u \gamma''(0).
\end{equation}
Of course,  $0=\frac{d}{d\tau}\int|\gamma(\tau)|^{2}=2\int \gamma(\tau)\gamma'(\tau) $ implies 
$$\int |v|^{2}+\int u \gamma''(0)=0,$$
which, plugged into \eqref{eq:min} gives \eqref{+}. 
 Property \eqref{0}  follows by Proposition \ref{le:dif} and the translation invariance of  $\Sigma_{\rho}$: indeed
$\partial_{x_{i}}u\in T_{ u}\Sigma_{\rho}$ and 
we know $E_{\lambda}''(u)[\partial_{x_{i}}u,\partial_{x_{i}}u]=0.$
\noindent Finally, to prove \eqref{1}, 
note that by \eqref{$E''$} and  \eqref{nehari}
\begin{align*}
E_{\lambda}''(u)[  u,  u]&=  \int |(-\Delta)^{s/2} u| +\lambda \rho^{2}
+(1-2p)\int (\mathcal K_{\alpha}*|u|^{p}) |u|^{p}\\
&=2(1-p)\int (\mathcal K_{\alpha}*|u|^{p}) |u|^{p} <0.
\end{align*}
The result then follows from \eqref{+} and the direct sum decomposition (see \cite{BL} for the general setting):
$H^{s}(\mathbb R^{N})=T_{u}\Sigma_{\rho}\oplus \text{span}\{u\}.$
\end{proof}

\section{Multiplicity}
\label{SectMultiplicity}

\noindent
We begin with some geometric properties of the functional $E_{\omega}$ in \eqref{Eomega}.
The assumption \eqref{p}  will be tacitly assumed in the whole section. 
\begin{proposition}\label{TRIVIAL}
The functional $E_{\omega}$ satisfies the following geometric assumptions of the Symmetric Mountain Pass Theorem:
\begin{enumerate}[label=(\roman*),ref=\roman*]
\item \label{trivial1} it is even, that is $E_{\omega}(u)=E_{\omega}(-u),$
\item \label{trivial2} it has has a strict local minimum in $0$ with $E_{\omega}(0)=0,$
\item \label{trivial3} there exist a nested sequence $\{V_k\}$ of finite dimensional subspaces of $H^s(\R^N)$ and $\{R_k\}\subset \R^+$
such  that $E_\omega(u)\leq 0$ for every $u\in V_k$ with $\|u\|\geq R_k$.
\end{enumerate}
 \end{proposition}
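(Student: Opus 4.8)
The plan is to verify the three geometric assumptions of the Symmetric Mountain Pass Theorem one at a time, all of which follow from elementary manipulations with $E_\omega$ and the estimates already collected in Section~\ref{preliminaries}.

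For \eqref{trivial1}, evenness is immediate from the definition \eqref{Eomega}, since every occurrence of $u$ is squared or appears as $|u|^p$. For \eqref{trivial2}, I would start from
\[
E_\omega(u) = \tfrac{1}{2}\|u\|^2 - \tfrac{1}{2p}\int \X |u|^p,
\]
and bound the last term using \eqref{HL} of Lemma~\ref{Young}, namely $\int \X|u|^p \le C\|u\|_{2Np/(N+\alpha)}^{2p}$, followed by the Sobolev embedding $H^s(\R^N)\hookrightarrow L^{2Np/(N+\alpha)}(\R^N)$ (which is legitimate because $2Np/(N+\alpha)\in(2,2^*_s)$ by \eqref{itY1} of Lemma~\ref{Young}). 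This gives $E_\omega(u)\ge \tfrac12\|u\|^2 - C\|u\|^{2p}$, and since $2p>2$, the right-hand side is strictly positive for $0<\|u\|\le \rho$ with $\rho$ small, and vanishes at $u=0$; hence $0$ is a strict local minimum with $E_\omega(0)=0$.

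The only part requiring a genuine argument is \eqref{trivial3}. Fix any increasing sequence $\{V_k\}$ of finite-dimensional subspaces of $H^s(\R^N)$, say spanned by a Hilbert basis; it suffices to produce, for each fixed finite-dimensional $V$, a radius $R$ with $E_\omega\le 0$ on $V\setminus B_R$. The key point is that on a finite-dimensional space all norms are equivalent, so $\|u\|^2 \le C_V\|u\|_{2Np/(N+\alpha)}^2$ for $u\in V$; combined with the \emph{lower} bound obtained by positivity of the kernel — e.g. restricting the convolution integral to a neighbourhood of the diagonal, $\int\X|u|^p \ge c_V\|u\|_{2Np/(N+\alpha)}^{2p}$ for $u\in V$ — we get, for $u\in V$,
\[
E_\omega(u) \le \tfrac{C_V}{2}\|u\|_{2Np/(N+\alpha)}^2 - \tfrac{c_V}{2p}\|u\|_{2Np/(N+\alpha)}^{2p}.
\]
Since $2p>2$, the right-hand side tends to $-\infty$ as $\|u\|_{2Np/(N+\alpha)}\to\infty$, hence (again by norm equivalence on $V$) as $\|u\|\to\infty$, so it is $\le 0$ outside a ball of some radius $R=R_k$ depending on $V_k$.

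The main obstacle is making the coercive lower bound $\int\X|u|^p\gtrsim \|u\|_{2Np/(N+\alpha)}^{2p}$ on $V$ rigorous and homogeneity-consistent: one should note that $u\mapsto \big(\int\X|u|^p\big)^{1/2p}$ is a homogeneous functional of degree one which is continuous and strictly positive on $V\setminus\{0\}$ (positivity because $\mathcal K_\alpha>0$ a.e. and $u\not\equiv 0$), hence by compactness of the unit sphere of the finite-dimensional space $V$ it is bounded below by a positive constant there, which by homogeneity yields the claimed inequality on all of $V$. With that in hand, the scaling $u\mapsto tu$, $t\to\infty$, inside $V_k$ produces the desired $R_k$, completing the verification.
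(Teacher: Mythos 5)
Your proposal is correct and follows essentially the same route as the paper: evenness is immediate, the local minimum at $0$ comes from the bound $E_\omega(u)\geq \tfrac12\|u\|^2-C\|u\|^{2p}$ via \eqref{HL}, and the unboundedness below on finite-dimensional subspaces uses the $2p$-homogeneity of the convolution term beating the quadratic part. You merely make explicit what the paper leaves implicit in its one-line proof of \eqref{trivial3} (equivalence of norms on $V_k$ and the positive lower bound, by compactness of the unit sphere, for $u\mapsto\int\X|u|^p$), which is a welcome clarification but not a different argument.
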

 \begin{proof}
Property  \eqref{trivial1} is immediate. By \eqref{HL} it holds
$$
E_{\omega}(u)\geq \frac{1}{2} \|u\|^{2}- C\|u\|^{2p}
$$
getting \eqref{trivial2}.
Finally,  if $\{e_{i}\}_{i=1,..., k}$ is an orthogonal basis of a $k-$dimensional subspace $V_{k}$ of $\Hs$,
 then,  
writing  $u=\sum_{i=1}^{k}t_{i}e_{i}$, it is $E_{\omega}(u)\to -\infty$
for $\|u\|\to\infty$, proving \eqref{trivial3}.
\end{proof}
\noindent
To ensure existence of critical points of $E_{\omega}$,  a compactness condition is necessary.
To this aim some preliminaries are in order.

\medskip

\noindent
Firstly, let $\ell >1$, $N_{i}\geq2, i=1,...,\ell $,  or $\ell=1$ and $N\geq3$, and $N=\sum_{i=1}^{\ell }N_{i}$. A point in $\mathbb R^{N}$
is now denoted with $x=(x_{1},...,x_{\ell })$, $x_{i}\in \mathbb R^{N_{i}}.$
Let $\O(N_{i})$ be the orthogonal group on $\mathbb R^{N_{i}}$
and consider the product group  $$G:=\O(N_{1})\times\cdots\times \O(N_{\ell })$$
acting on $\mathbb R^{N}$ by
$$g \cdot x= (g_{1}x_{1},...,g_{\ell }x_{\ell })\,, \ \ g=(g_{1},...,g_{\ell })\in G$$
and whose representation in $\Hs$ is given by the linear and isometric action 
\begin{equation}\label{action}
(T_{g}u)(x)=u(g^{-1} \cdot x).
\end{equation}
Set $$X:=\{u\in \Hs: T_{g}u=u \text{ for all } g\in G\}.$$

\noindent In particular for $\ell =1$ we have the radial functions, $u(x)=u(|x|)$.
In this we say that the functions in $X$ are ``symmetric''.
Then $X$ is exactly  the closed and infinite dimensional subspace of fixed points for the action \eqref{action}.
The importance of this setting is twofold. Indeed
the functional $E_{\omega}$ is $G-$invariant, i.e. for every $g\in G$, $E_{\omega}\circ T_{g}=E_{\omega}$
and the space $X$ has compact embedding into $L^{q}(\mathbb R^{N}), q\in (2,2^{*}_{s})$,
see  \cite{Lions}.
 
\medskip

\noindent Secondly, for every fixed $u\in \Hs$, consider the problem
\begin{equation}\label{bi}
\begin{cases}
(-\Delta)^{\alpha/2} \varphi = \gamma(\alpha)|u|^{p}\,, \ \ \text{where } \ \gamma(\alpha):=\frac{\pi^{N/2}2^{\alpha} {\Gamma(\alpha/2)}}{{\Gamma({N}/{2}-{\alpha}/{2})}}, \\
\varphi\in \dot{H}^{\alpha/2}(\mathbb R^{N}),
\end{cases}
\end{equation}
(where $\Gamma$ is the gamma function) whose weak formulation is the following one: we say that $\varphi\in  \dot{H}^{\alpha/2}(\mathbb R^{N})$
is a weak solution if for every $\xi \in \dot{H}^{\alpha/2}(\mathbb R^{N})$
\begin{equation}\label{weakalpha}
\int (-\Delta)^{\alpha/4}\varphi(-\Delta)^{\alpha/4}\xi=\gamma(\alpha)\int \xi |u|^{p}.
\end{equation}
Recall that for every $\alpha\in (0,N),  (-\Delta)^{\alpha/2}u$ is  defined via the Fourier transform and 
$ \dot{H}^{\alpha/2}(\mathbb R^{N})$ is defined as the completion of $C^{\infty}_{c}(\mathbb R^{N})$
with respect to the associated Gagliardo seminorm 
(these notions coincide with that given in the Introduction for $\alpha\in (0,2)$).
Observe now that, under the assumption on $p$,
the right hand side
in \eqref{weakalpha} defines the map 
$$L: v \in \dot{H}^{\alpha/2}(\mathbb R^{N})\mapsto \int v|u|^{p}\in \mathbb R$$
which is linear and continuous; indeed 
$$
|Lv|
\leq  C \| u \|^{p}_{2Np/(N+\alpha)} \|v\|_{\dot{H}^{\alpha/2}}\leq C\|u\|^{p}\|v\|_{\dot{H}^{\alpha/2}}.
$$
By the Riesz Representation Theorem there exists a unique weak solution $\varphi$ of \eqref{bi},
represented as a convolution with the kernel 
$\mathcal K_{\alpha}/\gamma(\alpha)$, i.e. $\varphi=\mathcal K_{\alpha}*|u|^{p}$
and
$$
\|\mathcal K_{\alpha}*|u|^{p} \|_{\dot{H}^{\alpha/2}}=\|L\|\leq C \|u\|^{p}.
$$
\noindent
As a consequence of the above setting
we can prove the following result, which will help us to recover compactness.
\begin{lemma}\label{compattezza}
Let $\{u_{n}\}, u \in X$ be such that $u_{n}\rightharpoonup u$  in $\Hs$.  Then
\begin{enumerate}[label=(\roman*),ref=\roman*]
\item \label{compattezzai}$\mathcal K_{\alpha}*|u_{n}|^{p}\to \mathcal K_{\alpha}*|u|^{p}$ in $\dot{H}^{\alpha/2}(\mathbb R^{N})$;
\item \label{compattezzaii}$\int (\mathcal K_{\alpha}*|u_{n}|^{p})|u_{n}|^{p}\to \int (\mathcal K_{\alpha}*|u|^{p})|u|^{p}$; 
\item \label{compattezzaiii}$\int (\mathcal K_{\alpha}*|u_{n}|^{p})|u_{n}|^{p-2}u_{n}u\to \int (\mathcal K_{\alpha}*|u|^{p})|u|^{p}$. 
\end{enumerate}
\end{lemma}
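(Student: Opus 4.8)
The plan is to exploit the compact embedding $X \hookrightarrow L^q(\mathbb{R}^N)$ for $q \in (2, 2^*_s)$ together with the Hilbert-space structure on $\dot H^{\alpha/2}(\mathbb{R}^N)$ established via the Riesz representation just above. Write $\Phi(v) := \mathcal{K}_\alpha * |v|^p$, which by the preceding discussion is the Riesz representative in $\dot H^{\alpha/2}(\mathbb{R}^N)$ of the functional $\xi \mapsto \gamma(\alpha)\int \xi |v|^p$, with $\|\Phi(v)\|_{\dot H^{\alpha/2}} \leq C\|v\|^p$. Since $u_n \rightharpoonup u$ in $\Hs$, the sequence is bounded, hence so is $\{\Phi(u_n)\}$ in $\dot H^{\alpha/2}(\mathbb{R}^N)$; by reflexivity a subsequence converges weakly there, and it suffices to identify the limit as $\Phi(u)$ and upgrade to norm convergence.

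For part \eqref{compattezzai}, first I would note that by \eqref{itY1} of Lemma \ref{Young} the relevant exponent $q_0 = 2Np/(N+\alpha)$ lies in $(2, 2^*_s)$, so the compact embedding gives $u_n \to u$ strongly in $L^{q_0}(\mathbb{R}^N)$, and passing to a further subsequence $u_n \to u$ a.e. Then $|u_n|^p \to |u|^p$ in $L^{2N/(N+\alpha)}(\mathbb{R}^N)$ (the dual Hardy--Littlewood--Sobolev exponent), by the generalized dominated convergence / Brezis--Lieb type argument using $\||u_n|^p\|_{2N/(N+\alpha)} = \|u_n\|_{q_0}^p \to \|u\|_{q_0}^p$ together with a.e. convergence. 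Consequently, for every fixed $\xi \in \dot H^{\alpha/2}(\mathbb{R}^N)$,
\[
\langle \Phi(u_n), \xi\rangle_{\dot H^{\alpha/2}} = \gamma(\alpha)\int \xi\, |u_n|^p \longrightarrow \gamma(\alpha)\int \xi\, |u|^p = \langle \Phi(u), \xi\rangle_{\dot H^{\alpha/2}},
\]
using Hardy--Littlewood--Sobolev (so $\xi \in L^{2Np/(N+\alpha)}$) and the $L^{2N/(N+\alpha)}$-convergence of $|u_n|^p$. Hence $\Phi(u_n) \rightharpoonup \Phi(u)$ weakly in $\dot H^{\alpha/2}(\mathbb{R}^N)$. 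To get strong convergence it remains to show $\|\Phi(u_n)\|_{\dot H^{\alpha/2}} \to \|\Phi(u)\|_{\dot H^{\alpha/2}}$; but $\|\Phi(u_n)\|_{\dot H^{\alpha/2}}^2 = \gamma(\alpha)\int \Phi(u_n)\,|u_n|^p = \gamma(\alpha)\int (\mathcal{K}_\alpha * |u_n|^p)|u_n|^p$, and this converges to $\gamma(\alpha)\int(\mathcal{K}_\alpha*|u|^p)|u|^p = \|\Phi(u)\|_{\dot H^{\alpha/2}}^2$ by the HLS bilinearity: split as $\int(\mathcal{K}_\alpha * |u_n|^p)|u_n|^p - \int(\mathcal{K}_\alpha*|u|^p)|u|^p$ into cross terms each controlled by $\||u_n|^p - |u|^p\|_{2N/(N+\alpha)}$ times bounded factors. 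Weak convergence plus convergence of norms in a Hilbert space yields strong convergence, proving \eqref{compattezzai}, and the norm computation simultaneously proves \eqref{compattezzaii}.

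For part \eqref{compattezzaiii}, I would write
\[
\int (\mathcal{K}_\alpha * |u_n|^p)|u_n|^{p-2}u_n u = \langle \Phi(u_n), \psi_n\rangle_{\dot H^{\alpha/2}}
\]
where $\psi_n \in \dot H^{\alpha/2}$ represents $\xi \mapsto \gamma(\alpha)\int \xi\, |u_n|^{p-2}u_n u$; alternatively, and more cleanly, just estimate directly via HLS: $\big|\int (\mathcal{K}_\alpha * |u_n|^p)(|u_n|^{p-2}u_n u - |u|^p)\big| \leq C\|u_n\|_{q_0}^{p}\,\||u_n|^{p-2}u_n u - |u|^p\|_{2N/(N+\alpha)}$, and the second factor tends to $0$ because $|u_n|^{p-2}u_n u \to |u|^p$ a.e. and in $L^{2N/(N+\alpha)}$ (Hölder with exponents $q_0/(p-1)$ and $q_0$, using strong $L^{q_0}$ convergence of $u_n$ and of $u_n \to u$, plus a.e. convergence to pin down the limit); combined with $\int (\mathcal{K}_\alpha * |u_n|^p)|u|^p \to \int(\mathcal{K}_\alpha*|u|^p)|u|^p$ from \eqref{compattezzai}, this gives the claim. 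The main obstacle is the passage from a.e. plus norm-bounded convergence of the $p$-th power type terms to genuine $L^{2N/(N+\alpha)}$-norm convergence of $|u_n|^p$ and of $|u_n|^{p-2}u_nu$; this is where one invokes the Brezis--Lieb lemma (or the Vitali/uniform-integrability argument) rather than naive dominated convergence, since there is no integrable majorant a priori — but the equality of the limiting norms, which does hold here, is exactly what rescues it.
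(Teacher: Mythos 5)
Your proof is correct, and it reaches the three conclusions by a route that differs in execution from the paper's, though both rest on the same two pillars: the compact embedding of $X$ into $L^{q_0}(\mathbb R^N)$ with $q_0=2Np/(N+\alpha)\in(2,2^*_s)$, and Hardy--Littlewood--Sobolev/Sobolev duality for $\dot H^{\alpha/2}(\mathbb R^N)$. For item (\ref{compattezzai}) the paper argues more directly: it estimates the \emph{operator norm} of the difference of the functionals $L_n v=\int v|u_n|^p$ and $Lv=\int v|u|^p$ by $\||u_n|^p-|u|^p\|_{(2^*_{\alpha/2})'}$, and since $\mathcal K_\alpha*|u_n|^p$ is exactly the Riesz representative of $L_n$ (up to $\gamma(\alpha)$), operator-norm convergence is already norm convergence of the potentials --- no detour through weak convergence plus convergence of Hilbert norms is needed. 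In fact you have this shortcut available yourself: once you know $|u_n|^p\to|u|^p$ in $L^{2N/(N+\alpha)}$, linearity of the representation in the density gives $\|\mathcal K_\alpha*|u_n|^p-\mathcal K_\alpha*|u|^p\|_{\dot H^{\alpha/2}}\leq C\||u_n|^p-|u|^p\|_{2N/(N+\alpha)}$ outright. For items (\ref{compattezzaii}) and (\ref{compattezzaiii}) the paper instead extracts pointwise dominating functions (in $L^{2^*_{\alpha/2}}$, $L^{(2^*_{\alpha/2})'}$ and $L^{2Np/((N+\alpha)(p-1))}$ respectively) and invokes the Dominated Convergence Theorem, whereas you run quantitative HLS/H\"older estimates controlled by $\||u_n|^p-|u|^p\|_{2N/(N+\alpha)}$ and $\||u_n|^{p-2}u_n-|u|^{p-2}u\|_{q_0/(p-1)}$. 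Your version is arguably cleaner: it avoids the extraction of majorants and gives explicit rates in terms of the $L^{q_0}$ convergence. Two small remarks: the compact embedding already yields strong $L^{q_0}$ convergence of the \emph{whole} sequence, and where you pass to a subsequence for a.e.\ convergence (to apply the Riesz--Scheff\'e/Brezis--Lieb step) you should either note the standard sub-subsequence argument, since the limits are uniquely identified, or bypass a.e.\ convergence altogether by using the elementary inequality $\big||a|^p-|b|^p\big|\leq C(|a|^{p-1}+|b|^{p-1})|a-b|$ together with H\"older, which gives the Nemytskii continuity $u_n\to u$ in $L^{q_0}\Rightarrow |u_n|^p\to|u|^p$ in $L^{q_0/p}$ (and similarly for $|u_n|^{p-2}u_n$) with no measure-theoretic detour.
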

\begin{proof}
Define the  linear and continuous maps $L_{n}, L: \dot{H}^{\alpha/2}(\mathbb R^{N})\to \mathbb R$ such that
$$
L_{n}v=\int v|u_{n}|^{p}, \quad Lv=\int v|u|^{p}\qquad v\in \dot{H}^{\alpha/2}(\mathbb R^{N}).
$$
By the compact embedding we may assume 
\begin{equation*} 
\|u_{n}-u\|_{q}\to 0, \quad \text{for all $q\in (2,2^{*}_{s})$}.
\end{equation*}
Then, denoting with $(2^{*}_{\alpha/2})'$ the conjugate exponent of $2^{*}_{\alpha/2}$,
$$
|L_{n}v-Lv|\leq \int |v|||u_{n}|^{p}- |u|^{p}|\leq \|v\|_{2^{*}_{\alpha/2}}\||u_{n}|^{p}- |u|^{p}\|_{(2^{*}_{\alpha/2})'}
\leq \|v\|_{\dot{H}^{\alpha/2}}\varepsilon_{n}
\to 0,
$$
which proves the convergence of $L_{n}$ to $L$ in the operator norm, 
yielding (\ref{compattezzai}).

We now observe that 
$$
|\mathcal K_{\alpha}* |u_{n}|^{p}|\leq \xi\in  L^{2_{\alpha/2}^{*}}(\mathbb R^{N}),\quad 
|u_n|^p\leq\mu \in L^{(2^*_{\alpha/2})'}(\R^N),\quad 
|u_{n}|^{p-1}\leq \eta\in L^{\frac{2Np}{(N+\alpha)(p-1)}}(\mathbb R^{N}).
$$
Hence
by Young Inequality we have 
$$
(\mathcal K_{\alpha}*|u_{n}|^{p})|u_{n}|^{p}\leq 
\frac{1}{2^{*}_{\alpha/2}}\xi^{2^{*}_{\alpha/2}}+
\frac{1}{(2^{*}_{\alpha/2})'}\mu^{(2^{*}_{\alpha/2})'}\in L^1(\R^N),
$$
as well as
$$ (\mathcal K_{\alpha}*|u_{n}|^{p}) |u_{n}|^{p-1}|u|\leq \frac{1}{2^{*}_{\alpha/2}}\xi^{2^{*}_{\alpha/2}}
+\frac{(N+\alpha)(p-1)}{2Np} \eta^{\frac{2Np}{(N+\alpha)(p-1)}} +\frac{N+\alpha}{2Np}|u|^{\frac{2Np}{N+\alpha}}\in L^{1}(\mathbb R^{N}).$$
The Dominated Convergence Theorem allows to obtain (\ref{compattezzaii}) and (\ref{compattezzaiii}).
\end{proof}

\begin{theorem}
The functional $E_{\omega}$ 
satisfies the Palais-Smale condition in $X$.

\end{theorem}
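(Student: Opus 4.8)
The plan is to follow the classical scheme: establish boundedness of a Palais--Smale sequence through the usual algebraic identity, extract a weak limit, and then use Lemma~\ref{compattezza} to pass to the limit in the (doubly nonlocal) nonlinear term, thereby upgrading weak convergence to strong convergence. Let $\{u_{n}\}\subset X$ be a sequence with $E_{\omega}(u_{n})\to c$ for some $c\in\R$ and with $E_{\omega}'(u_{n})\to 0$ in the dual of $X$, so that $E_{\omega}'(u_{n})[v]=o(1)$ uniformly for $v\in X$ with $\|v\|\le 1$. Testing with $u_{n}$ itself, which is admissible since $u_{n}\in X$, and using $E_{\omega}(u_{n})\to c$, one gets
\[
\Big(\frac12-\frac{1}{2p}\Big)\|u_{n}\|^{2}
= E_{\omega}(u_{n})-\frac{1}{2p}E_{\omega}'(u_{n})[u_{n}]
\le c+1+o(1)\,\|u_{n}\|
\]
for $n$ large; since $p>1$ the coefficient on the left is positive, hence $\{u_{n}\}$ is bounded in $\Hs$.

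Up to a subsequence, $u_{n}\rightharpoonup u$ in $\Hs$. As $X$ is a closed linear subspace of $\Hs$ it is weakly closed, so $u\in X$, and Lemma~\ref{compattezza} applies; in particular, by (\ref{compattezzaii}) and (\ref{compattezzaiii}) of that lemma,
\[
\int(\mathcal K_{\alpha}*|u_{n}|^{p})|u_{n}|^{p}\to\int(\mathcal K_{\alpha}*|u|^{p})|u|^{p},
\qquad
\int(\mathcal K_{\alpha}*|u_{n}|^{p})|u_{n}|^{p-2}u_{n}\,u\to\int(\mathcal K_{\alpha}*|u|^{p})|u|^{p},
\]
so that
\[
\int(\mathcal K_{\alpha}*|u_{n}|^{p})|u_{n}|^{p-2}u_{n}\,(u_{n}-u)\to 0 .
\]

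It remains to test $E_{\omega}'(u_{n})$ against $u_{n}-u\in X$. Since $\{u_{n}-u\}$ is bounded, $E_{\omega}'(u_{n})[u_{n}-u]=o(1)$, that is
\[
(u_{n},u_{n}-u)=\int(\mathcal K_{\alpha}*|u_{n}|^{p})|u_{n}|^{p-2}u_{n}\,(u_{n}-u)+o(1)\longrightarrow 0 ,
\]
where $(\cdot,\cdot)$ denotes the scalar product of $\Hs$. On the other hand $(u,u_{n}-u)\to 0$ by weak convergence, whence $\|u_{n}-u\|^{2}=(u_{n},u_{n}-u)-(u,u_{n}-u)\to 0$, i.e.\ $u_{n}\to u$ strongly in $\Hs$. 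The only point requiring more than routine estimates is the passage to the limit in the convolution terms: this would fail on the full space $\Hs$, and it is rescued precisely by the compactness of the symmetric subspace $X$ in $L^{q}(\R^{N})$, $q\in(2,2^{*}_{s})$, which is the mechanism encapsulated by Lemma~\ref{compattezza}.
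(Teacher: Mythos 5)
Your proof is correct and follows essentially the same route as the paper: standard boundedness of the Palais--Smale sequence, weak convergence in the symmetric subspace $X$, and Lemma~\ref{compattezza} to pass to the limit in the nonlocal terms. The only cosmetic difference is that you test $E_{\omega}'(u_{n})$ against $u_{n}-u$ to get $\|u_{n}-u\|\to 0$ directly, whereas the paper tests against $u$ and $u_{n}$ separately and concludes via $\|u_{n}\|\to\|u\|$ together with weak convergence; the two are equivalent.
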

\begin{proof}
Let   $\{u_{n}\}\subset X$ be a Palais-Smale sequence, that is, 
\begin{equation*} 
\left|E_{\omega}(u_{n})\right|\leq M\,, \ \ \  E_{\omega}'(u_{n})\to 0 \ \text{ in } \ H^{-s}(\mathbb R^{N}).
\end{equation*}
Then
we deduce in a standard way the boundedness of $\{u_{n}\}$ in  $\Hs$.
Hence, there exists $u\in X$ such that, up to subsequences, $u_{n}\rightharpoonup u$ in $\Hs$.
By  Lemma \ref{compattezza} 
we have the convergences
$$
0\longleftarrow E_{\omega}'(u_{n})[u]=(u_{n},u)-\int (\mathcal K_{\alpha}*|u_{n}|^{p})|u_{n}|^{p-2}u_{n}u\longrightarrow
\|u\|^{2}-\int\X |u|^{p},
$$
$$E_{\omega}'(u_{n})[u_{n}]=\|u_{n}\|^{2}-\int  (\mathcal K_{\alpha}*|u_{n}|^{p})|u_{n}|^{p}\longrightarrow 0,$$
$$\int  (\mathcal K_{\alpha}*|u_{n}|^{p})|u_{n}|^{p} \longrightarrow \int \X |u|^{p},$$
from which we deduce that $\|u_{n}\| \to \|u\|.$
This gives the desired conclusion.
%
%
\end{proof}


\noindent
It follows

\begin{theorem}
\label{multy-thm}
The functional $E_{\omega}$ 
possesses infinitely many critical points $\{u_{n}\}\subset X$ such that
$E_{\omega}(u_{n})\to \infty$, and $\|u_{n}\|\to \infty.$
In paticular, problem \eqref{equomega} has  infinitely many solutions in $X$.
\end{theorem}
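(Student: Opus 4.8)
\textbf{Proof proposal for Theorem \ref{multy-thm}.}
The plan is to invoke the Symmetric Mountain Pass Theorem (in the version for even $C^1$ functionals on a Banach space, cf.\ \cite{BL} or Rabinowitz) applied to the restriction of $E_\omega$ to the closed subspace $X$. The three ingredients needed are exactly what we have assembled: (i) the geometric hypotheses, established in Proposition \ref{TRIVIAL}; (ii) the Palais--Smale condition for $E_\omega|_X$, established in the preceding theorem; and (iii) the principle of symmetric criticality, which guarantees that a critical point of $E_\omega|_X$ is in fact a critical point of $E_\omega$ on the whole of $H^s(\mathbb R^N)$, hence a genuine weak solution of \eqref{equomega}. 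For (iii) one uses that $G$ acts on $H^s(\mathbb R^N)$ by the linear isometries $T_g$ of \eqref{action}, that $E_\omega$ is $G$-invariant, and that $X$ is the fixed-point set of this action; Palais' symmetric criticality principle then applies.

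Concretely, I would first record that $E_\omega|_X$ is even with $E_\omega(0)=0$ (Proposition \ref{TRIVIAL}\eqref{trivial1}) and that by Proposition \ref{TRIVIAL}\eqref{trivial2} there are $r,\rho>0$ with $E_\omega(u)\ge \rho>0$ whenever $u\in X$, $\|u\|=r$; here one must check that the nested finite-dimensional subspaces $V_k$ of Proposition \ref{TRIVIAL}\eqref{trivial3} can be taken inside $X$ — this is harmless since $X$ is infinite dimensional, so one simply chooses an increasing sequence of finite-dimensional subspaces $V_1\subset V_2\subset\cdots\subset X$ with $\overline{\bigcup_k V_k}=X$, for instance spanned by smooth compactly supported $G$-symmetric functions. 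Then define the minimax levels
\[
c_k=\inf_{\gamma\in\Gamma_k}\ \max_{u\in D_k}\ E_\omega(\gamma(u)),
\]
where $D_k$ is the closed ball of radius $R_k$ in $V_k$ and $\Gamma_k$ is the usual class of odd continuous maps $\gamma:D_k\to X$ that are the identity on $\partial D_k$. Standard intersection/linking arguments (using the positivity on the sphere $\{\|u\|=r\}$ in $X$ and the fact that $\dim V_k\to\infty$) give $c_k\to+\infty$.

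By the Palais--Smale condition each $c_k$ is a critical value of $E_\omega|_X$, so we obtain critical points $u_k\in X$ with $E_\omega(u_k)=c_k\to+\infty$. To get $\|u_k\|\to\infty$, note that on critical points $E_\omega'(u_k)[u_k]=0$ forces $\|u_k\|^2=\int\X|u_k|^p$, whence
\[
E_\omega(u_k)=\Big(\tfrac12-\tfrac1{2p}\Big)\|u_k\|^2 ,
\]
so $\|u_k\|^2 = \frac{2p}{p-1}c_k\to\infty$. Finally, symmetric criticality upgrades each $u_k$ to a weak solution of \eqref{equomega} in $X$, and after passing to a subsequence we may assume the $E_\omega(u_k)$ (hence the norms) are strictly increasing and divergent, giving infinitely many distinct solutions. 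I expect the only genuinely delicate point to be the verification that the abstract symmetric mountain pass framework applies cleanly on the \emph{subspace} $X$ rather than on all of $H^s(\mathbb R^N)$ — i.e.\ checking that the minimax classes and the deformation lemma respect $X$ — but this is standard once one works throughout with the $G$-invariant setting and the compact embedding $X\hookrightarrow L^q(\mathbb R^N)$, $q\in(2,2^*_s)$, recalled above; everything else is bookkeeping.
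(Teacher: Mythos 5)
Your proposal is correct and follows essentially the same route as the paper: the Symmetric Mountain Pass Theorem applied to $E_\omega$ on $X$ (geometry from Proposition \ref{TRIVIAL}, compactness from the Palais--Smale result, with the $V_k$ chosen inside the infinite-dimensional space $X$), followed by Palais' symmetric criticality to pass from constrained to true critical points. The only cosmetic difference is in deducing $\|u_n\|\to\infty$: you use the Nehari-type identity $E_\omega(u_n)=\bigl(\tfrac12-\tfrac1{2p}\bigr)\|u_n\|^2$ at critical points, while the paper uses the bound $\int(\mathcal K_\alpha*|u_n|^p)|u_n|^p\leq C\|u_n\|^{2p}$ together with $E_\omega(u_n)\to\infty$; both are immediate and equivalent in effect.
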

\begin{proof}
All the hypotheses (geometry and compactness) of the Symmetric Mountain Pass Theorem
on the space $X$  are satisfied, so that the existence of infinitely many critical points $\{u_{n}\}\subset X$ with $E_{\omega}(u_{n})\to \infty$
is guaranteed.
Then, since $\int (\mathcal K_{\alpha} *|u_{n}|)|u_{n}|^{p}\leq C \|u_{n}\|^{2p},$
it has to be $\|u_{n}\|\to \infty.$
 By the Palais Principle of Symmetric Criticality, 
 the constrained critical points $\{u_{n}\}\subset X$ for $E_{\omega}$ are indeed ``true'' critical points
 and hence solutions of \eqref{equomega}.
\end{proof}

\noindent Observe that Proposition \ref{TRIVIAL} holds also in the limit cases $p=1+{\alpha}/{N}$ and $ p=(N+\alpha)/(N-2s)$.
Due to the nonexistence result (see  Section \ref{nonex}), we see that the Palais-Smale condition cannot be satisfied
for these values.

\bigskip

\noindent
To obtain nonradial solutions we need a slight modification in the above setting,
as introduced in \cite{BW}.
Let $N=4$ or $N\geq6$ and choose an integer $m\neq (N-1)/2$ such that $2\leq m \leq N/2$. Let us define
$$
G:=\O(m)\times \O(m)\times \O(N-2m)
$$
whose induced action on $\Hs$ is as usual
\begin{equation}\label{azione2}
(T_{g}u)(x)=u(g_{1}^{-1}x_{1},g_{2}^{-1}x_{2},g^{-1}_{3}x_{3}), \ \  g=(g_{1},g_{2},g_{3}) \in G,
\end{equation}
where, now
$x=(x_{1},x_{2},x_{3})\in \mathbb R^{m}\oplus\mathbb R^{m}\oplus\mathbb R^{N-2m}$.
We know that $X$, associated to the action \eqref{azione2}, has compact embedding into $L^{q}(\mathbb R^{N}),q\in(2,2^{*}_{s})$.
Consider the involution in $\mathbb R^{N}$ 
$$\tau\cdot x=(x_{2},x_{1},x_{3})$$
and the action  
\begin{equation*}\label{}
(\mathcal I u)(x)=u(x)\,, \ \ \ \ 
(\mathcal T u)(x)=-u(\tau^{-1}\cdot  x)
\end{equation*}
induced by  $H=\{\iota_{H},\tau \}$  on $H^{s}(\mathbb R^{N})$.
Define also the group 
$$
K:=G\rtimes_{\psi}H\subset \O(N)
$$
via the group homomorphism $\psi: H\to {\rm Aut}(G)$ given by $$\psi(\iota_{H})g=g, \quad \psi(\tau)g=g^{-1}, \quad g\in G.$$
Moreover, if 
$$\pi: K\to \{+1,-1\}\ \ \text{such that }\  \pi(g,\iota_{H})=1,  \ \ \pi(g,\tau)=-1$$
denotes the canonical epimorphism, we define the action of $K$  on $H^{s}(\mathbb R^{N})$ by
$$(T_{k}u)(x):=\pi(k)u(k^{-1}\cdot x)\,, \ k\in K.$$
Of course, this action is linear and isometric and in particular if, $k=(g,\iota_{H}) $ then $(T_{k}u)(x)=u(g^{-1} \cdot x)$, if, $k=(\iota_{G},\tau)$ then $(T_{k}u)(x)=-u(\tau^{-1} \cdot x).$
Set $$Y:=\{u\in \Hs: T_{k}u=u \text{ for all } k\in K\}$$
and note that the unique radial function in $Y$ is $u\equiv 0.$ 
Since $E_{\omega}$ is $K-$invariant and  $Y\subset X$ is closed and infinite dimensional, we can argue as before obtaining
the following multiplicity result.
\begin{theorem}
Assume $N=4$ or $N\geq6$. The functional $E_{\omega}$ 
possesses infinitely many critical points $\{u_{n}\}\subset Y$ such that $E_{\omega}(u_{n})\to \infty$ and $\|u_{n}\|\to \infty$.
In particular, problem \eqref{equomega} has  infinitely many solutions in $Y$.
\end{theorem}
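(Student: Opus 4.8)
The plan is to mirror, almost verbatim, the argument already carried out for the group $G$ and the space $X$ in the radial case, replacing everywhere $G$ by $K$ and $X$ by $Y$. The three ingredients needed for the Symmetric Mountain Pass Theorem are: the geometry of $E_\omega$ on $Y$, the Palais--Smale condition on $Y$, and the fact that $Y$ is an infinite-dimensional closed subspace on which $E_\omega$ is $K$-invariant and even. The geometry is inherited with no change: Proposition~\ref{TRIVIAL} is proved using only \eqref{HL} and the coercivity-type estimate $E_\omega(u)\geq \tfrac12\|u\|^2-C\|u\|^{2p}$, together with the fact that $E_\omega(u)\to-\infty$ along rays in any finite-dimensional subspace; since $Y$ is itself infinite dimensional one extracts a nested sequence $\{V_k\}$ of finite-dimensional subspaces of $Y$ exactly as before, and the evenness of $E_\omega$ is immediate. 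So properties \eqref{trivial1}--\eqref{trivial3} of Proposition~\ref{TRIVIAL} hold with $\Hs$ replaced by $Y$.

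The compactness ingredient is the only place where one must check that the new symmetry group is actually being used. The point is that, by the result of~\cite{BW} quoted just above, the space $Y$ associated to the action \eqref{azione2} (equivalently, the fixed-point space of $K$ inside $X$) embeds compactly into $L^q(\R^N)$ for $q\in(2,2^*_s)$. Granting this, Lemma~\ref{compattezza} applies with $X$ replaced by $Y$: its proof only used the compact embedding $X\hookrightarrow L^q$, $q\in(2,2^*_s)$, the linearity/continuity of the functionals $L_n,L$ on $\dot H^{\alpha/2}$, and Young's inequality together with dominated convergence. Consequently, repeating verbatim the proof of the Palais--Smale theorem for $X$, any Palais--Smale sequence $\{u_n\}\subset Y$ is bounded, hence $u_n\rightharpoonup u$ in $Y$ (the space is weakly closed, being a closed linear subspace), and Lemma~\ref{compattezza} yields $\|u_n\|\to\|u\|$, so $u_n\to u$ strongly. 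Thus $E_\omega$ satisfies $(PS)$ on $Y$.

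With geometry and compactness in hand, the Symmetric Mountain Pass Theorem applied on $Y$ produces an unbounded sequence of critical values, i.e. critical points $\{u_n\}\subset Y$ with $E_\omega(u_n)\to\infty$; and from $\int(\mathcal K_\alpha*|u_n|^p)|u_n|^p\leq C\|u_n\|^{2p}$ together with the identity $E_\omega(u_n)=(\tfrac12-\tfrac1{2p})\int\X|u_n|^p$ valid at critical points, one gets $\|u_n\|\to\infty$. Finally, since the $K$-action is linear and isometric and $E_\omega$ is $K$-invariant, the Palais Principle of Symmetric Criticality guarantees that these constrained critical points on $Y$ are genuine critical points of $E_\omega$ on $\Hs$, hence weak solutions of \eqref{equomega}. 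Because the only nonzero functions in $Y$ are nonradial (the unique radial element of $Y$ is $0$, as noted above), these solutions are nonradial, giving the claim under the hypothesis $N=4$ or $N\geq 6$ (which is precisely what allows the choice of an integer $m\neq(N-1)/2$ with $2\leq m\leq N/2$ defining $G$ and hence $K$).

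The main obstacle, such as it is, is really a bookkeeping one rather than a conceptual one: one must be sure that every step of the $X$-argument — the extraction of the finite-dimensional spaces $V_k$ in Proposition~\ref{TRIVIAL}, the statement and proof of Lemma~\ref{compattezza}, and the $(PS)$ verification — used \emph{only} the compact embedding into $L^q(\R^N)$ for $q\in(2,2^*_s)$ and the isometric $K$-invariance of $E_\omega$, and never any special feature of the radial setting. Once this is confirmed (and inspecting those proofs shows it is the case), the present theorem follows by the identical chain of reasoning, and no genuinely new estimate is required.
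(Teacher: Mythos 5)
Your proposal is correct and follows essentially the same route as the paper: the paper's own proof simply observes that $E_\omega$ is $K$-invariant and that $Y\subset X$ is a closed, infinite-dimensional subspace (hence inherits the compact embedding into $L^q(\R^N)$, $q\in(2,2^*_s)$), and then repeats the Symmetric Mountain Pass plus Palais-Smale plus symmetric-criticality argument already given for $X$, exactly as you do.
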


\noindent Hence the proof of  Theorem \ref{multiplinto} is completed.

\section{Nonexistence}
\label{nonex}
\noindent
As known, in order to formally deduce a Poho\v zaev identity, one can compute
$$
\frac{d}{d\vartheta}J(\gamma_{u}(\vartheta)){\Big|_{\vartheta=1}}=0,
$$
where $\gamma_{u}(\vartheta):=u(\vartheta x)$ and $u$ is a solution to problem \eqref{equomega}.
 We find
\begin{equation}\label{Pohozaev}
  (N-2s)\int |(-\Delta)^{s/2}u|^2+\omega N\int |u|^{2}=\frac{\alpha+N}{p}\int \X |u|^{p}.
\end{equation}
We shall rigorously justify this identity. 
We follow the localization argument developed in \cite{ChangWang} by defining
the space $X^s(\R_+^{N+1})$ as the completion of $C^\infty_0(\R^{N+1}_+)$ for the norm
$$
\|w\|_{X^s(\R_+^{N+1})}:=\Big(\varpi_s^{-1}\int_{\R^{N+1}_+} y^{1-2s}|\nabla w|^2 dxdy\Big)^{1/2},\qquad
\varpi_s:=2^{1-2s}\frac{\Gamma(1-s)}{\Gamma(s)}.
$$
For a given $u\in H^s(\R^N)$, the solution $w\in X^s(\R_+^{N+1})$
of the minimization problem
$$
\min\Big\{\int_{\R^{N+1}_+}y^{1-2s}|\nabla w|^2 dxdy: w(x,0)=u(x)\,\,\,\text{on $\R^N$}\Big\}
$$
is the solution to the boundary value problem
$$
\begin{cases}
-{\rm div}(y^{1-2s}\nabla w)=0 & \text{on $\R^{N+1}_+$}  \\
w(x,0)=u(x)  & \text{on $\R^{N}$} ,
\end{cases}
$$
and it is usully called the $s$-harmonic extension of $u,$ and
$$
\|w\|_{X^s(\R_+^{N+1})}=\|(-\Delta)^{s/2}u\|_{2}.
$$ 
As known, the fractional Laplacian can be defined as the Dirichlet-to-Neumann map
$$
(-\Delta)^s u(x)=-\frac{1}{\varpi_s}\lim_{y\to 0^+} y^{1-2s}\frac{\partial w}{\partial y}(x,y),
\qquad u\in H^s(\R^N).
$$
Therefore, our nonlocal equation~\eqref{equomega} can be restated into a local form as
\begin{equation*} 
\begin{cases}
-{\rm div}(y^{1-2s}\nabla w)=0 & \text{on $\R^{N+1}_+$}  \\
\partial^s_\nu w(x,0)=-\omega u+ \X |u|^{p-2}u  & \text{on $\R^{N}$} ,
\end{cases}
\end{equation*}
where we have set
$$
\partial^s_\nu w(x,0):=-\frac{1}{\varpi_s}\lim_{y\to 0^+} y^{1-2s}\partial_y w(x,y).
$$
Without loss of generality, we shall set $\varpi_s=1$.
Of course, if $w$ is a weak solution to this problem, then $u(x)=w(x,0)$ is a weak solution to \eqref{equomega}.
\vskip4pt
\noindent
We have the following result.

\begin{theorem}
\label{pohoz}
Let $u \in C^2(\R^N)\cap H^s(\mathbb{R}^N)\cap L^{\frac{2Np}{N+\alpha}}(\R^N)$ be a weak solution to \eqref{equomega}.
Then \eqref{Pohozaev} holds. 
\end{theorem}
\noindent Taking into account Remark \ref{Questa},  it is not restrictive to assume  $u\in C^{2}(\mathbb R^{N})$.
\begin{proof}
	Since $u\in C^2(\R^N)$ we have $w\in C^2(\R^{N+1}_+)$. Set 
	$\D=\{z=(x,y)\in \R^N\times [0,+\infty):|z|\leq  1\}$ and consider a cut-off function
	$\varphi\in C^1_c(\R^{N}\times [0,+\infty))$ such that $\varphi=1$ on $\D$, and $\varphi_{R}(x,y):=\varphi(x/R,y/R)$.
	A direct computation yields
	\begin{align*}
	{\rm div}(y^{1-2s}\nabla w)[\varphi_{R}(z\cdot\nabla w)]
	&=
	{\rm div}[(y^{1-2s}\nabla w)\varphi_{R}(z\cdot\nabla w)]
	-y^{1-2s}\nabla w\cdot \nabla [\varphi_{R}(z\cdot\nabla w)]\\
	&=
	{\rm div}[(y^{1-2s}\nabla w)\varphi_{R}(z\cdot\nabla w)]
	-y^{1-2s}(\nabla\varphi_{R}\cdot\nabla w)(z\cdot\nabla w)\\
	&\qquad
	-y^{1-2s} \varphi_{R} |\nabla w|^2
	-\frac{1}{2}y^{1-2s} \varphi_{R} (z\cdot\nabla(|\nabla w|^2))\\
	&=
	{\rm div}\left[
	(y^{1-2s}\nabla w)\varphi_{R}(z\cdot\nabla w)
	-\frac{1}{2}y^{1-2s}\varphi_{R}z|\nabla w|^2
	\right]\\
	&\qquad
	-y^{1-2s}(\nabla\varphi_{R}\cdot\nabla w)(z\cdot\nabla w)
	+\frac{N-2s}{2}y^{1-2s} \varphi_{R} |\nabla w|^2\\
	&\qquad
	+\frac{1}{2}y^{1-2s}(z\cdot\nabla\varphi_{R})|\nabla w|^2
	\end{align*}
	and, integrating on $\R_+^{N+1}$, we get
	\begin{align*}
	\int_{\R_+^{N+1}}
	{\rm div}[
	(y^{1-2s}\nabla w)\varphi_{R}(z\cdot\nabla w)
	]
	&=
	\lim_{\eps\to 0^+} \int_{\R^N\times[\eps,+\infty[}	{\rm div}[
	(y^{1-2s}\nabla w)\varphi_{R}(z\cdot\nabla w)]\\
	&=
	\lim_{\eps\to 0^+}
	\int_{\partial(\R^N\times[\eps,+\infty[)} (y^{1-2s}\nabla w)\cdot \nu\varphi_{R}(z\cdot\nabla w)\\
	&=
	-\lim_{\eps\to 0^+}
	\int_{\partial(\R^N\times[\eps,+\infty[)} (y^{1-2s} \partial_y w)\varphi_{R}(z\cdot\nabla w)\\
	&=
	\int (-\omega u+ \X |u|^{p-2}u)\varphi_{R}(x,0)(x\cdot\nabla u)
	\end{align*}
	where $\nu(x)=(0,\cdots,0,-1)$. Now  following \cite[Proof of Proposition 3.1]{MV} we get
	\[
	\omega \int u \varphi_{R}(x,0)(x\cdot\nabla u)
	\to
	-\frac{\omega N}{2} \|u\|_2^2
	\hbox{ as } R\to +\infty
	\]
	and
	\[
	\int \X |u|^{p-2}u)\varphi_{R}(x,0)(x\cdot\nabla u)
	\to
	-\frac{N+\alpha}{2p} \int \X |u|^{p}
	\hbox{ as } R\to +\infty.
	\]
	Moreover by the Dominated Convergence Theorem we have
\begin{align*} 
\int_{\mathbb R^{N+1}_{+}} 
{\rm div}(y^{1-2s} \varphi_{ R} z |\nabla w|^{2})
&=
\lim_{\varepsilon\to 0+}
\int_{\mathbb R^{N}\times[\varepsilon,+\infty[} 
{\rm div}(y^{1-2s} \varphi_{ R} z |\nabla w|^{2})\\
&=
-\lim_{\varepsilon\to 0^{+}}
\varepsilon^{2-2s}
\int_{\partial(\R^N\times[\eps,+\infty[)}
\varphi_{ R} |\nabla w|^{2} =0,
\end{align*}

\[
\int_{\mathbb R^{N+1}_{+}} 
y^{1-2s}(\nabla\varphi_{R}\cdot\nabla w)(z\cdot\nabla w)
\to 0
\hbox{ as } R\to +\infty,
\]

\[
\int_{\mathbb R^{N+1}_{+}} 
y^{1-2s}(z\cdot\nabla\varphi_{R})|\nabla w|^2
\to 0
\hbox{ as } R\to +\infty,
\]

\[
\int_{\mathbb R^{N+1}_{+}} 
y^{1-2s}\varphi_{R} |\nabla w|^2
\to 
\int_{\mathbb R^{N+1}_{+}} 
y^{1-2s} |\nabla w|^2
\hbox{ as } R\to +\infty
\]	
which concludes the proof.
\end{proof}

\noindent By combining the Pohozaev Identity \eqref{Pohozaev} with
\[
\int |(-\Delta)^{s/2} u|^2 + \omega \int|u|^2 = \int \X |u|^{p}
\]
we get
\begin{equation*}
\Big(N-2s-\frac{\alpha+N}{p}\Big)\int |(-\Delta)^{s/2} u|^2
+\omega\Big(N-\frac{\alpha+N}{p}\Big)\int |u|^2=0.
\end{equation*}
Now, since $\omega>0$, if both the coefficients 
are positive, that is 
$
p\geq \alpha+N/(N-2s),
$
the unique solution is the trivial one. Analogously, if they are
are negative, that is
$
p\leq 1+\alpha/N,
$
nontrivial solutions cannot exist.
Thus we conclude the proof Theorem~\ref{pohoz}.  Now, the first statement of Theorem  \ref{pohoz-cons-2}
follows by Poho\v zaev identity \eqref{Pohozaev}.  \\
In case $s=1,$ the assertion that 
any solution to problem~\eqref{zeromass} of fixed sign has the 
form given in formula \eqref{rapprs},  was  stated 
in \cite[Proposition A.1]{Ingoroneta} and the authors claim in  Remark A.2-(3)
that the same holds for the fractional Laplacian.
In order to justify this conclusion and make the paper self-contained, we provide the following analysis
on how to rigorously prove the statement. 
\vskip3pt
\noindent
$\bullet$ {\em Invariance under Kelvin transform.} Consider the equation
\begin{equation*}
(-\Delta)^{s} u =v_u u,
\qquad
\hbox{with }
v_u= |\cdot|^{-4s}*u^{2},
\end{equation*}
and define the following operators, on functions $g$ defined a.e.,
$$
(Kg)(x):=|x|^{2s-N}g(x/|x|^{2})\ \quad \text{and} \quad\ (Hg)(x):=|x|^{-N-2s}g(x/|x|^{2}).
$$
$K$ is a Kelvin transform type operator, which is an isometry in $\dot H^{s}(\mathbb R^{N})$,
see \cite[Lemma 2.2]{fallweth}.
Observe that $K,H$  are involutions, namely
$K^{2}=I=H^{2}.$
Let us see now the behavior of $(-\Delta)^{s}$ and $v_u u$ under the operators $H,K.$ 
Fall and Weth, see \cite[Corollary 2.3]{fallweth}, prove that 
$$
H(-\Delta)^{s}=(-\Delta )^{s} K.
$$
The behavior of  the convolution term $v_u$ is  proved in
\cite[Lemma A.3]{Ingoroneta}, with $s=1$, where the authors use the identity
$|y|^{4}|x-{y}/{|y|^{2}}|^{4}=|x|^{4}|{x}/{|x|^{2}}-y|^{4}.$
In our case, by replacing the  exponent $4$ with $4s$,
 exactly the same computation gives
$$
v_u (x)= |x|^{-4s} v_{K^{-1} u} (x/|x|^{2}).
$$
Notice that, by the definition, 
$$
u(x)=|x|^{-(N-2s)}Ku(x/|x|^{2}), 
$$
and then
\begin{align*}
v_u(x) u(x)
&=|x|^{-4s}  v_{Ku} (x/|x|^{2}) |x |^{-(N-2s)} Ku(x/|x|^{2})\\
&= |x|^{-(N+2s)}v_{Ku} (x/|x|^{2})  Ku(x/|x|^{2})\\
&= H\big(v_{Ku} Ku\big)(x),
\end{align*}
namely, $H[v_u u]=v_{Ku} Ku.$
If $u$ is a solution of \eqref{zeromass}, by applying $H$ to both sides we have
$$
(-\Delta)^{s} Ku=v_{Ku} Ku
$$
and so $Ku\in \dot H^{s}(\mathbb R^{N})$ is a solution of~\eqref{zeromass} too. 
\vskip3pt
\noindent
$\bullet$ {\em Radial symmetry and monotonicity.}
We want to prove that each positive solution $u$ of \eqref{zeromass} is radially symmetric and monotone 
decreasing about some point $x_0\in\mathbb{R}^N$.
Let $u\in\dot{H}^s(\R^N)$, $u>0$, be a solution of \eqref{zeromass} and, for simplicity, let $v:=v_u$. By Sobolev embedding we have that $u\in L^{2^*_s}(\R^N)$ and by Hardy-Littlewood-Sobolev inequality, it follows $v\in L^{N/(2s)}(\R^N)$.
Moreover, by arguing as in \cite[Theorem 4.5]{classif}, we have that equation \eqref{zeromass} is equivalent to the system
\begin{equation}
\label{integral-f}
u(x)=\int \frac{v(y)u(y)}{|x-y|^{N-2s}}dy,\qquad v=|x|^{-4s}*u^2.
\end{equation}
We use classical notations for the moving plane, namely
$\Sigma_\lambda=\{x_1\geq\lambda\}$ and $u_\lambda(x)=u(x^\lambda)=u(2\lambda-x_1,x_2,\ldots,x_N)$.
Simple calculations show that
\begin{align*}
u_\lambda (x) - u(x) 
&=
\int_{\Sigma_\lambda} \Big(\frac{1}{|x-y|^{N-2s}}-\frac{1}{|x^\lambda-y|^{N-2s}}\Big)\left(u_\lambda(y)v_\lambda(y)-u(y)v(y)\right)dy, \\
v_\lambda (x) - v(x)
&=
\int_{\Sigma_\lambda} \Big(\frac{1}{|x-y|^{4s}}-\frac{1}{|x^\lambda-y|^{4s}}\Big)\left(u_\lambda^2(y)-u^2(y)\right)dy.
\end{align*}
Then, for any $x\in\Sigma_\lambda$, we have
\begin{align*}
u_\lambda (x) - u(x) 
&\leq
\int_{\{y\in \Sigma_\lambda : uv \leq u_\lambda v_\lambda\}} \frac{u_\lambda(y)v_\lambda(y)-u(y)v(y)}{|x-y|^{N-2s}}dy\\
&=
\int_{\{y\in \Sigma_\lambda : uv \leq u_\lambda v_\lambda\}} \frac{u(y)[v_\lambda(y)-v(y)]+ v_\lambda(y)[u_\lambda(y)-u(y)]}{|x-y|^{N-2s}}dy\\
& \leq
\int_{\Sigma_\lambda^u} \frac{ v_\lambda(y)[u_\lambda(y)-u(y)]}{|x-y|^{N-2s}}dy
+ \int_{\Sigma_\lambda^v} \frac{u(y)[v_\lambda(y)-v(y)]}{|x-y|^{N-2s}},
\end{align*}
where we have set $\Sigma_\lambda^u=\Sigma_\lambda\cap\{u_\lambda>u\}$ and
$\Sigma_\lambda^v=\Sigma_\lambda\cap\{v_\lambda>v\}.$
Then, by Hardy-Littlewood-Sobolev and H\"older inequalities we have
\begin{equation}
\label{eq:dismiao1}
\|u_\lambda - u\|_{L^{2^*_s}(\Sigma_\lambda^u)} 
\leq 
C(\|v\|_{L^{N/(2s)}(\Sigma_\lambda^c)} \|u_\lambda - u\|_{L^{2^*_s}(\Sigma_\lambda^u)}
+ \|u\|_{L^{2^*_s}(\Sigma_\lambda^v)} \|v_\lambda - v\|_{L^{N/(2s)}(\Sigma_\lambda^v)}).
\end{equation}
Analogously we get, for all $x\in\Sigma_\lambda$,
\[
v_\lambda(x) - v(x)
\leq
2 \int_{\Sigma_\lambda^u} \frac{u_\lambda(y)(u_\lambda(y)-u(y))}{|x-y|^{4s}}dy
\]
and
\begin{equation}
\label{eq:dismiao2}
\|v_\lambda - v\|_{L^{N/(2s)}(\Sigma_\lambda^v)}
\leq 
C \| u\|_{L^{2^*_s}(\Sigma_\lambda^c)} \| u_\lambda - u\|_{L^{2^*_s}(\Sigma_\lambda^u)}.
\end{equation}
Since $\|v\|_{L^{N/(2s)}(\Sigma_\lambda^c)},\| u\|_{L^{2^*_s}(\Sigma_\lambda^c)}\to 0$ as $\lambda\to-\infty$, combining \eqref{eq:dismiao1} and \eqref{eq:dismiao2},
%
we obtain $\|u_\lambda - u\|_{L^{2^*_s}(\Sigma_\lambda^u)}=0$ and hence
$|\Sigma_\lambda^u|=0$ and $|\Sigma_\lambda^v|=0$.
The proof of radial symmetry and monotonicity of $u$ and $v$ can be obtained in the same way of \cite[Step 2 and Step 3]{Ingoroneta} using the analogous inequalities given above.
\vskip3pt
\noindent
$\bullet$ {\em Classification result.}
The same geometrical argument as in \cite[proof of Step 3, p.335]{classif}, 
which exploits the invariance of the problem under the Kelvin transform,
shows that there exists a positive constant $u_\infty$ such that
\begin{equation}
\label{asbehavi}
\lim_{|x|\to\infty} |x|^{N-2s}u(x)=u_\infty.
\end{equation}
With the above tools available, namely {\em Kelvin invariance, radial symmetry}, the {\em scaling invariance} 
$$
u_\lambda(x)=\lambda^{\frac{N-2s}{2}}u(\lambda x),\quad \lambda>0,
$$
and the {\em asymptotics} as in \eqref{asbehavi}, then the desired
classification follows as in \cite[Section 3.1]{classif}, where the authors deal with the problem 
$(-\Delta)^su=u^{2^*_s-1}$ in $\R^N$. More precisely, having formula \eqref{asbehavi} available, 
the arguments of \cite[Section 3.1]{classif}, which rely on the validity of \cite[Lemma 3.1 and 3.2]{classif}, carry on 
with no variations since they contain calculations independent of the particular structure of the nonlinear term.

\bigskip
\bigskip

\end{document}